\numberwithin{equation}{section}
\theoremstyle{plain}
\newtheorem{theorem}{Theorem}[section]
\newtheorem{proposition}[theorem]{Proposition}
\newtheorem{corollary}[theorem]{Corollary}
\theoremstyle{definition}
\newtheorem{definition}{Definition}[section]
\newtheorem{example}{Example}[section]
\theoremstyle{remark}
\newtheorem{remark}{Remark}[section]
\newcommand{\norm}[1]{\left\|#1\right\|}
\newcommand{\newnorm}[1]{\left|\left|\left|#1\right|\right|\right|}
\newcommand{\abs}[1]{\left\vert#1\right\vert}
\newcommand{\spr}[1]{\left\langle\,#1\,\right\rangle}
\newcommand{\kl}[1]{\left(#1\right)}
\newcommand{\Kl}[1]{\left\{#1\right\}}
\definecolor{aog}{rgb}{0.0, 0.5, 0.0}
\newcommand{\R}{\mathbb{R}} 
\newcommand{\C}{\mathbb{C}}
\newcommand{\N}{\mathbb{N}}
\newcommand{\Z}{\mathbb{Z}}
\newcommand{\xkd}{x_k^\delta}
\newcommand{\xkpd}{x_{k+1}^\delta}
\newcommand{\yd}{y^{\delta}}
\newcommand{\eps}{\varepsilon}
\newcommand{\RN}{{\R^N}}
\newcommand{\ZN}{{\Z^N}}
\newcommand{\F}{\mathcal{F}}
\newcommand{\FI}{\mathcal{F}^{-1}}
\newcommand{\CmO}{{C^m(\Omega)}}
\newcommand{\CiO}{{C^\infty(\Omega)}}
\newcommand{\CizO}{{C^\infty_0(\Omega)}}
\newcommand{\LtO}{{L_2(\Omega)}}
\newcommand{\LtOp}{{L_2(\Omega')}}
\newcommand{\LtR}{{L_2(\R)}}
\newcommand{\LtRN}{{L_2(\R^N)}}
\newcommand{\LiR}{{L_\infty(\R)}}
\newcommand{\LtTN}{{L_2(\mathbb{T}^N)}}
\newcommand{\HsO}{{H^s(\Omega)}}
\newcommand{\HsR}{{H^s(\R)}}
\newcommand{\HsRN}{{H^s(\R^N)}}
\newcommand{\HmO}{{H^m(\Omega)}}
\newcommand{\HmzO}{{H_0^m(\Omega)}}
\newcommand{\HsTN}{{H^s(\mathbb{T}^N)}}
\newcommand{\Xs}{{X_s}}
\newcommand{\vphi}{\varphi}
\newcommand{\laplace}{\Delta}
\newcommand{\M}{\mathcal{M}}
\newcommand{\Mv}{\textbf{M}}
\newcommand{\nv}{\textbf{n}}
\newcommand{\zv}{\textbf{z}}
\newcommand{\uv}{\textbf{u}}
\newcommand{\vv}{\textbf{v}}
\newcommand{\ydv}{\textbf{y}^\delta}
\newcommand{\Am}{\textbf{A}}
\newcommand{\Esmn}{E_{m,n}^{s}}
\newcommand{\HXm}{\textbf{H}_{X_m}}
\newcommand{\HYn}{\textbf{H}_{Y_n}}
\newcommand{\Mmn}{\textbf{M}_{m,n}}
\title{Characterizations of Adjoint Sobolev Embedding Operators with Applications in Inverse Problems}
\author{
Simon Hubmer\footnote{Johann Radon Institute Linz, Altenbergerstra{\ss}e 69, A-4040 Linz, Austria, (simon.hubmer@ricam.oeaw.ac.at), Corresponding author.} ,
Ekaterina Sherina\footnote{University of Vienna, Faculty of Mathematics, Oskar Morgenstern-Platz 1, 1090 Vienna, Austria (ekaterina.sherina@univie.ac.at)} ,
Ronny Ramlau\footnote{Johannes Kepler University Linz, Institute of Industrial Mathematics, Altenbergerstra{\ss}e 69, A-4040 Linz, Austria, (ronny.ramlau@jku.at)} \footnote{Johann Radon Institute Linz, Altenbergerstra{\ss}e 69, A-4040 Linz, Austria, (ronny.ramlau@ricam.oeaw.ac.at)}
}
\begin{document}

\maketitle

\begin{abstract}

We consider the Sobolev embedding operator $E_s : \HsO \to \LtO$ and its role in the solution of inverse problems. In particular, we collect various properties and investigate different characterizations of its adjoint operator $E_s^*$, which is a common component in both iterative and variational regularization methods. These include variational representations and connections to boundary value problems, Fourier and wavelet representations, as well as connections to spatial filters. Moreover, we consider characterizations in terms of Fourier series, singular value decompositions and frame decompositions, as well as representations in finite dimensional settings. While many of these results are already known to researchers from different fields, a detailed and general overview or reference work containing rigorous mathematical proofs is still missing. Hence, in this paper we aim to fill this gap by collecting, introducing and generalizing a large number of characterizations of $E_s^*$ and discuss their use in regularization methods for solving inverse problems. The resulting compilation can serve both as a reference as well as a useful guide for its efficient numerical implementation in practice.

\smallskip
\noindent \textbf{Keywords.} Sobolev Spaces, Embedding Operators, Inverse Problems
\end{abstract}


\section{Introduction}

In this paper, we consider the Sobolev embedding operator
    \begin{equation}\label{embedding}
        E_s \, : \, \HsO \to \LtO \,,
        \qquad
        u \mapsto E_s u := u \,,
    \end{equation}
where $\HsO$ denotes the Sobolev space of order $s \in \R$, and $\Omega \subseteq \R^N$ for some $N \in \N$. The embedding operator $E_s$ not only plays a role in the proper definition of inverse problems, but is also crucial in their solution. Especially its adjoint operator $E_s^*$ commonly appears in regularization methods, and thus close attention needs to be paid to its properties and efficient implementation. Please note that while the implementation of the embedding operator $E_s$ itself is trivial, the evaluation of its adjoint $E_s^*$ is not. 

Due to its ubiquitous appearance in inverse problems, researchers have used a number of different approaches for dealing with the adjoint embedding operator $E_s^*$. These are typically either based on proper discretizations of the underlying problems, or on characterizations of $E_s^*$ in terms of certain variational or boundary value problems; see e.g.~\cite{Neubauer_1989,Ramlau_Teschke_2004_1,Engl_Hanke_Neubauer_1996,Kaltenbacher_Neubauer_Scherzer_2008,Neubauer_2000}. Implicitly, the connection between the (adjoint) Sobolev embedding operator and differential equations is also present in classic reference works such as \cite{Evans_1998,Adams_1970,McLean_2000,Necas_2011,Gilbarg_Trudinger_1998,Agmon_1965}. Furthermore, characterizations of $E_s^*$ in terms of Fourier series as well as Fourier and wavelet transforms are explicitly considered in a one-dimensional setting in \cite{Ramlau_2008,Ramlau_Teschke_2004_1}, and are implicitly present e.g.\ in \cite{Adams_Fournier_1977,Daubechies_1992,Meyer_1993, McLean_2000, Aronszajn_Smith_1961}. Unfortunately, these approaches for characterizing the adjoint embedding operator $E_s^*$ are scattered throughout the literature, and their description is often very brief, restricted to simple cases, or given without explicitly specifying underlying assumptions or providing proper mathematical proofs.

Hence, with this paper we want to bridge this gap, collecting, introducing and generalizing a large number of different approaches to and characterizations of the adjoint embedding operator $E_s^*$. With this, we aim to establish a common reference point for both researchers and students in Inverse Problems and other fields. Among the considered topics are characterizations of $E_s^*$ as the solution of certain boundary value problems, representations in terms of Fourier and wavelet transforms, as well as characterizations via Bessel potentials and spatial filters. Furthermore, we consider representations in terms of Fourier series, singular value decompositions and frame decompositions, as well as different representations in finite dimensional settings. Whenever the basic ideas of the presented approaches could be traced back to previous publications, we provided proper references, and aimed to generalize the corresponding approaches as far as possible, for example by extending them to higher dimensions, by explicitly stating otherwise implicit or overlooked assumptions, and by providing concise proofs for all of them.

The outline of this paper is as follows: In Section~\ref{sect_Sobolev_Spaces_Embeddings}, we review some basic definitions and results on Sobolev spaces that are used throughout the paper. In Section~\ref{sect_PDEs}, we present characterizations via boundary value problems, and in Sections~\ref{sect_fourier_transform} we consider representations via Fourier transforms, which are also the starting point for representations via spatial filters given in Section~\ref{sect_filters}. This is followed by wavelet characterizations in Section~\ref{sect_wavelets}, via Fourier series in Section~\ref{sect_series}, and via singular value and frame decompositions in Section~\ref{sect_SVD_Frames}. Finally, we consider representations in finite dimensional settings in Section~\ref{sect_discrete}, followed by applications of the adjoint embedding operator in inverse problems and a numerical example in Section~\ref{sect_application}, as well as a short conclusion in Section~\ref{sect_conclusion}.

\section{Sobolev Spaces and Embeddings}\label{sect_Sobolev_Spaces_Embeddings}

In this section, we recall the definition and some general facts about Sobolev spaces and the Sobolev embedding operator, which are collected from \cite{Evans_1998,Adams_Fournier_2003,Adams_1970,Adams_Fournier_1977,Neubauer_1988,McLean_2000,Necas_2011}.

\subsection{Sobolev spaces of integer order}

Throughout this paper, we use the following standard notations and assumptions:
    \begin{itemize}
        \item Let $N \in \N$ and let the domain $\Omega \subseteq \RN$ be nonempty and open.
        \item For all $m \in \N$, let $\CmO$ denote the vector space of all continuous functions $u : \Omega \to \C$ whose partial derivatives up to order $m$ are also continuous on $\Omega$.
        \item Let $\CiO := \bigcap_{m=0}^\infty \CmO$ be the space of infinitely differentiable functions, and let $\CizO$ be the subspace of all functions in $\CiO$ with compact support in $\Omega$.
        \item Let $\alpha = \kl{\alpha_1\,, \dots \,, \alpha_N}$ denote a multiindex, i.e., an $N$-tuple of non-negative integers, and let $\abs{\alpha} := \alpha_1 + \dots + \alpha_N$ denote the order of $\alpha$.   
    \end{itemize}
\vspace{0pt}

\begin{definition}
The Lebesgue space $\LtO$ is defined as
    \begin{equation*}
        \LtO := \Kl{ u : \Omega \to \R \, \vert \, u \,\, \text{is measurable and} \, \norm{u}_\LtO < \infty   } \,,
    \end{equation*}
where the norm
    \begin{equation*}
        \norm{u}_\LtO := \kl{\int_\Omega \abs{u(x)}^2 \, dx}^{1/2} \,.
    \end{equation*}
Hereby, we identify functions in $\LtO$ which are equal almost everywhere in $\Omega$. 
\end{definition}

The space $\LtO$ is a separable Hilbert space when equipped with the inner product
    \begin{equation*}
        \spr{u,v}_\LtO := \int_\Omega u(x) \overline{v(x)} \, dx \,.
    \end{equation*}
In order to define Sobolev spaces, we first need to introduce the derivatives
    \begin{equation*}
        D^\alpha u := \frac{\partial^{\abs{\alpha}}}{\partial x_1 ^{\alpha_1} \, \cdots \, \partial x_N^{\alpha_N}} u 
        =
        \partial_{x_1}^{\alpha_1} \, \cdots \, \partial_{x_N}^{\alpha_N} u \,.
    \end{equation*}
Throughout this paper, derivatives are understood in the weak sense, i.e., if $u$ and $v$ are locally integrable functions, then $v$ is the $\alpha^\text{th}$-weak partial derivative of $u$ if and only if 
    \begin{equation*}
        \spr{u, D^\alpha \phi}_\LtO = (-1)^{\abs{\alpha}}\spr{v , \phi}_\LtO \,,
        \quad 
        \forall \phi \in \CizO \,.
    \end{equation*}
In this case, we write $D^\alpha u = v$. With this, we can now introduce Sobolev spaces in 

\begin{definition}
For any $m \in \N$, the Sobolev space $\HmO$ of order $m$ is defined by
    \begin{equation}\label{def_HmO_Da_full}
        \HmO := \Kl{ u \in \LtO \, \vert \, \text{ $D^\alpha u \in \LtO$ for all $\alpha$ with $0 \leq \abs{\alpha} \leq m$}} \,.
    \end{equation}
On the space $\HmO$ we define the norm
    \begin{equation}\label{def_HmO_Da_full_norm}
        \norm{u}_\HmO := \kl{\sum\limits_{0\leq \abs{\alpha} \leq m} \norm{D^\alpha u }_\LtO^2 }^{1/2} \,.
    \end{equation}
Furthermore, we define $\HmzO$ as the closure of $\CizO$ in the space $\HmO$.
\end{definition}

Note that $H^0(\Omega) = H^0_0(\Omega) = \LtO$. Furthermore, equipped with the inner products
    \begin{equation}\label{def_HmO_Da_full_inner}
        \spr{u,v}_\HmO :=  \sum\limits_{0\leq \abs{\alpha} \leq m} \spr{D^\alpha u , D^\alpha v}_\LtO \,,
    \end{equation}
the Sobolev spaces $\HmO$ are separable Hilbert spaces. Next, consider the seminorms
    \begin{equation}\label{def_seminorm_Hm}
        \abs{u}_\HmO := \kl{ \sum\limits_{\abs{\alpha} = m}  \norm{ D^\alpha u}_\LtO^2   }^{1/2} \,,
    \end{equation}
which in certain situations can be used to define equivalent norms on $\HmO$ and $\HmzO$, influencing the corresponding inner products and in turn also $E_s^*$. For discussing these situations, we first need to introduce two geometric properties on the domain $\Omega$, which are given in the following definition adapted from \cite{Adams_Fournier_1977,Adams_Fournier_2003}.

\begin{definition}\label{def_domains}
For all $x \in \Omega \subseteq \RN$ let $R(x)$ denote the set of all $y \in \Omega$ such that the line segment from $x$ to $y$ lies entirely in $\Omega$. Then $\Omega$ satisfies the \emph{weak cone condition} if and only if there exists a $\delta > 0$ such that for all $x \in \Omega$ there holds
    \begin{equation*}
        \lambda_N\kl{\Kl{y \in R(x) \, \vert \, \abs{y-x}<1}} \geq \delta  \,,
    \end{equation*}
where $\lambda_N$ denotes the Lebesgue measure in $\RN$. Furthermore, the domain $\Omega$ is said to have a \emph{finite width} if it lies between two parallel lines of dimension $(N-1)$.
\end{definition}

Using these geometric properties, we can now state the following result; cf.~\cite{Adams_1970,Adams_Fournier_2003,Necas_2011}.
\begin{proposition}\label{prop_norm_equivalence_Hm}
If the domain $\Omega \subseteq \RN$ satisfies the weak cone condition, then 
    \begin{equation}\label{def_HmO_Da_simple_norm}
        \newnorm{u}_\HmO := \kl{\norm{u}_\LtO^2 + \abs{u}_\HmO^2 }^{1/2} \,,
    \end{equation}
defines a norm on $\HmO$ which is equivalent to $\norm{\cdot}_\HmO$. Similarly, if $\Omega$ has a finite width, then the seminorm $\abs{\cdot}_\HmO$ itself is a norm on $\HmzO$ equivalent to $\norm{\cdot}_\HmO$. 
\end{proposition}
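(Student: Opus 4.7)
The plan is to prove the two equivalences separately, each reducing to a classical Sobolev-type inequality on $\Omega$. In both statements one direction is immediate: $\newnorm{u}_\HmO \leq \norm{u}_\HmO$ follows since $\newnorm{u}_\HmO^2$ keeps only the $\abs{\alpha}=0$ and $\abs{\alpha}=m$ contributions of $\norm{u}_\HmO^2$, and likewise $\abs{u}_\HmO \leq \norm{u}_\HmO$ on $\HmzO$. The substance of the proposition lies in the reverse bounds.

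For the equivalence on $\HmO$, the reverse direction amounts to an intermediate derivatives (Gagliardo-Nirenberg-type) inequality: under the weak cone condition there exists $C>0$ such that
\begin{equation*}
\norm{D^\alpha u}_\LtO \leq C \kl{\norm{u}_\LtO + \abs{u}_\HmO}, \qquad \forall u \in \HmO,\ 0 < \abs{\alpha} < m.
\end{equation*}
To establish this I would first prove a one-dimensional version for smooth functions on a bounded interval, $\norm{f^{(j)}}_\Lt \leq C_j \kl{\norm{f}_\Lt + \norm{f^{(m)}}_\Lt}$ for $0<j<m$, via Taylor's formula with integral remainder or iterated integration by parts. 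Then, for each $x \in \Omega$, the weak cone condition supplies a set of directions of Lebesgue measure at least $\delta$ along which segments from $x$ of length less than one lie in $\Omega$; applying the one-dimensional estimate along each such segment, squaring, integrating over the admissible directions (using the uniform lower bound $\delta$), and then over $x$ yields the global $L^2$ estimate. Summing over the multiindices with $0\leq\abs{\alpha}\leq m$ then gives $\norm{u}_\HmO \leq C' \newnorm{u}_\HmO$, completing the first part.

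For the equivalence on $\HmzO$, the key tool is the Poincaré inequality for domains of finite width. Choose coordinates so that $\Omega \subseteq \{(x_1,x') \in \RN \, : \, a \leq x_1 \leq a+d\}$ and extend $\phi \in \CizO$ by zero to $\RN$. Then $\phi(x_1,x') = \int_a^{x_1} \partial_{x_1}\phi(t,x')\,dt$, and Cauchy-Schwarz followed by integration in $x_1$ yields $\norm{\phi}_\LtO \leq d\,\norm{\partial_{x_1}\phi}_\LtO \leq d\,\abs{\phi}_{H_0^1(\Omega)}$. By density this extends to $u \in \HmzO$, and since $D^\alpha u \in H_0^{m-\abs{\alpha}}(\Omega)$ for $\abs{\alpha}<m$ (by approximating $u$ by $\CizO$-functions in $\HmO$), iterating the Poincaré estimate on each $D^\alpha u$ yields $\norm{D^\alpha u}_\LtO \leq C\,\abs{u}_\HmO$ for all $0\leq\abs{\alpha}<m$. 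Summing then gives $\norm{u}_\HmO \leq C'' \abs{u}_\HmO$, which is the required equivalence.

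The main obstacle is the intermediate derivatives estimate under the \emph{weak} cone condition, which is weaker than the standard cone condition or Lipschitz boundary hypothesis typically used in textbook versions of Gagliardo-Nirenberg. The cone-covering and one-dimensional reduction must be arranged so that the resulting constant depends only on $\delta$, $m$, and $N$; this is precisely the content of the corresponding theorem in Adams-Fournier, which may simply be cited. Once it is available, the remainder of the argument is routine.
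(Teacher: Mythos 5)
Your argument is correct, but it is considerably more self-contained than what the paper actually does: the paper's proof of this proposition consists of two citations, deferring the first part to \cite[Theorem~2]{Adams_Fournier_1977} (see also \cite{Necas_2011}) and the second to \cite{Adams_1970} and \cite[Corollary~6.31]{Adams_Fournier_2003}. For the first part you end up in the same place as the paper --- you correctly identify that everything reduces to the intermediate-derivatives inequality $\norm{D^\alpha u}_\LtO \leq C(\norm{u}_\LtO + \abs{u}_\HmO)$ for $0<\abs{\alpha}<m$, and you rightly concede that proving this under the \emph{weak} cone condition (rather than a genuine cone or Lipschitz condition) is delicate and is exactly the content of the Adams--Fournier theorem, so citing it is the honest move; your sketch of the cone-covering reduction is heuristic (the weak cone condition bounds the measure of a set of \emph{points}, not directly of directions, and converting one into the other is where the real work in \cite{Adams_Fournier_1977} lies), but you do not rely on it. For the second part you go further than the paper and give a complete elementary proof: the slab Poincar\'e--Friedrichs inequality $\norm{\phi}_\LtO \leq d\,\norm{\partial_{x_1}\phi}_\LtO$ for $\phi\in\CizO$ extended by zero, a density argument, the observation that $D^\alpha u \in H_0^{m-\abs{\alpha}}(\Omega)$ for $u\in\HmzO$, and iteration until one reaches a derivative of order exactly $m$ appearing in $\abs{u}_\HmO$. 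This is sound (the only point you gloss over is that rotating coordinates to align the slab with $x_1$ changes the seminorm $\abs{\cdot}_\HmO$ only up to constants depending on $m$ and $N$, which is harmless), and it buys a self-contained two-line proof of the second equivalence with an explicit constant $d^{m-\abs{\alpha}}$, at the cost of a somewhat longer write-up than the paper's one-sentence citation.
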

\begin{proof}
The first part of the this proposition essentially follows from \cite[Theorem~2]{Adams_Fournier_1977}; see also \cite{Necas_2011}. The second part was originally shown in \cite{Adams_1970}; see also \cite[Corollary~6.31]{Adams_Fournier_2003}.
\end{proof}

Note that the norm $\newnorm{\cdot}_\HmO$ considered in \eqref{def_HmO_Da_simple_norm} is induced by the inner product 
    \begin{equation}\label{def_HmO_Da_simple_inner}
        \spr{u,v}_\HmO :=  \spr{u,v}_\LtO + \sum\limits_{\abs{\alpha} = m} \spr{D^\alpha u , D^\alpha v}_\LtO \,.
    \end{equation}
Similarly, on $\HmzO$ the equivalent seminorm $\abs{\cdot}_\HmO$ is induced by the inner product
    \begin{equation}\label{def_seminorm_Hm_inner}
        \spr{u,v}_\HmO :=  \sum\limits_{\abs{\alpha} = m} \spr{D^\alpha u , D^\alpha v}_\LtO \,.
    \end{equation}
    
\begin{remark}
Concerning the geometric assumptions in Proposition~\ref{prop_norm_equivalence_Hm}, note that the domain $\Omega$ satisfies the weak cone condition e.g.\ if its boundary is uniformly $C^m$ regular for some $m \geq 2$, or if it satisfies the strong local Lipschitz condition  \cite{Adams_Fournier_2003}. Furthermore, the weak cone condition is also satisfied if $\Omega = \RN$ or some half space in $\RN$. Moreover, note that any bounded domain $\Omega$ also has finite width. 
\end{remark}

\subsection{Sobolev spaces of fractional order}

Next, we consider Sobolev spaces $\HsO$ of fractional order, i.e., when $0 \leq s \in \R$ is not necessarily an integer. These can be defined in different ways, for example:
    \begin{enumerate}
        \item Via interpolation theory, as intermediate spaces between $\LtO$ and $H^{\lceil s \rceil}(\Omega)$ \cite{Adams_Fournier_2003, Bergh_Loefstroem_1976}.
        \item Via the Slobodeckij seminorm, then also called Sobolev-Slobodeckij spaces \cite{Adams_Fournier_1977,McLean_2000}.
        \item Via Bessel potentials, then also called Bessel-potential spaces \cite{Adams_Fournier_1977,Evans_1998,McLean_2000}.
    \end{enumerate}
These definitions are equivalent as long as the domain $\Omega$ is sufficiently regular \cite{Adams_Fournier_1977}. In this paper, we restrict ourselves to fractional Sobolev spaces over $\RN$ and focus on the definition via Bessel potentials. For this, we first need to recall the following 

\begin{definition}
For any integrable $u : \RN \to \C$ we define the Fourier transform 
    \begin{equation*}
        (\F u)(\xi) = \int_\RN e^{-2\pi i \xi \cdot x} u(x) \, dx \,,
        \qquad
        \forall \, \xi \in \RN \,,
    \end{equation*}
as well as the inverse Fourier transform
    \begin{equation*}
        (\FI u)(x) = \int_\RN e^{2\pi i x \cdot \xi} u(\xi) \, d\xi \,, 
        \qquad
        \forall \, x \in \RN \,.
    \end{equation*}
\end{definition}

Both operators $\F$ and $\FI$ uniquely determine bounded linear operators
    \begin{equation*}
        \F : \LtRN \to \LtRN \,,
        \qquad
        \FI : \LtRN \to \LtRN \,,
    \end{equation*}
satisfying $\F \FI = \FI \F = I$, cf.~\cite{McLean_2000,Evans_1998}, as well as the convolution property
	\begin{equation}\label{Fourier_convolution}
		\F( u \ast v ) = \F(u) \F(v) \,,
		\qquad
		\text{where}
		\qquad
		(u \ast v)(x) := \int_\RN u(x-y) v(y) \, dy \,.
	\end{equation}
With this, we can now introduce fractional order Sobolev spaces over $\RN$ in

\begin{definition}
For any $0 \leq s \in \R$ the Sobolev space $\HsRN$ of order $s$ is defined by
    \begin{equation}\label{def_HsO_Bessel}
        \HsRN := \Kl{ u \in \LtRN \, \vert \, \FI\kl{\kl{1+4\pi^2 \abs{\cdot}^2}^{s/2} \F u  } \in \LtRN }
    \end{equation}
On the space $\HsRN$ we define the norm
    \begin{equation}\label{def_HsO_Bessel_norm_v1}
        \norm{u}_\HsRN := \norm{ (1+ 4\pi^2\abs{\cdot}^2)^{s/2} \F u  }_\LtRN \,.
    \end{equation}
\end{definition}

The Sobolev spaces $\HsRN$ defined in \eqref{def_HsO_Bessel} are also called Bessel-potential spaces; cf.\ Section~\ref{sect_fourier_transform}. They become Hilbert spaces when equipped with the inner products
    \begin{equation}\label{def_HsO_Bessel_inner_v1}
        \spr{u,v}_\HsRN := \int_\RN \kl{1+ 4\pi^2 \abs{\xi}^2}^s (\F u)(\xi) \overline{(\F v)(\xi)} \, d\xi \,.
    \end{equation}
Note that instead of $4 \pi^2$ any other positive constant could be used in \eqref{def_HsO_Bessel}, \eqref{def_HsO_Bessel_norm_v1}, and \eqref{def_HsO_Bessel_inner_v1}, leading to the same Sobolev space $\HsRN$ and equivalent norms. The following result adapted from \cite{McLean_2000} relates the Bessel potential spaces $\HsRN$ to the spaces $\HmO$ defined above.

\begin{proposition}
For all $s =m \in \N$ the Sobolev spaces $\HsRN$ as defined in \eqref{def_HmO_Da_full} and \eqref{def_HsO_Bessel} are equal, and the corresponding norms defined in \eqref{def_HmO_Da_full_norm} and \eqref{def_HsO_Bessel_norm_v1} are equivalent.
\end{proposition}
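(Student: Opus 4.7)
The plan is to convert both Sobolev norms into weighted $L_2$-integrals of the Fourier transform $\F u$, and then compare them via a pointwise equivalence of their symbols. The starting observation is the Fourier representation of weak derivatives: whenever $u \in \LtRN$ and $D^\alpha u \in \LtRN$, one has $\F(D^\alpha u)(\xi) = (2\pi i \xi)^\alpha (\F u)(\xi)$ as elements of $\LtRN$. Combined with Plancherel's identity $\norm{v}_\LtRN = \norm{\F v}_\LtRN$, this yields
\[
\norm{D^\alpha u}_\LtRN^2 \;=\; (2\pi)^{2\abs{\alpha}} \int_\RN \abs{\xi^\alpha}^2 \abs{(\F u)(\xi)}^2\,d\xi,
\]
so that, summing over $0 \leq \abs{\alpha} \leq m$, the $\HmRN$-norm from \eqref{def_HmO_Da_full_norm} is expressed as a weighted $L_2$-integral of $\F u$ with symbol $\sum_{\abs{\alpha}\leq m}(4\pi^2)^{\abs{\alpha}}\abs{\xi^\alpha}^2$, which is then to be compared with the Bessel-potential symbol $(1+4\pi^2\abs{\xi}^2)^m$ from \eqref{def_HsO_Bessel_norm_v1}.

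The core algebraic step is the pointwise two-sided bound
\[
c\,\kl{1+4\pi^2\abs{\xi}^2}^m \;\leq\; \sum_{\abs{\alpha}\leq m}(4\pi^2)^{\abs{\alpha}}\abs{\xi^\alpha}^2 \;\leq\; C\,\kl{1+4\pi^2\abs{\xi}^2}^m, \qquad \forall\, \xi \in \RN,
\]
with constants $0 < c \leq C$ depending only on $m$ and $N$. I would first apply the binomial theorem to expand $(1+4\pi^2\abs{\xi}^2)^m = \sum_{k=0}^m \binom{m}{k}(4\pi^2)^k \abs{\xi}^{2k}$, and then the multinomial theorem to rewrite $\abs{\xi}^{2k} = \sum_{\abs{\alpha}=k}\binom{k}{\alpha}\abs{\xi^\alpha}^2$. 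Both outer expressions and the middle sum are then nonnegative linear combinations of the same monomials $\abs{\xi^\alpha}^2$ indexed by $\abs{\alpha}\leq m$; comparing the resulting coefficient ratios $\binom{m}{\abs{\alpha}}\binom{\abs{\alpha}}{\alpha}$, which are bounded above and below by positive constants depending only on $m$ and $N$, produces $c$ and $C$.

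With this pointwise bound, set equality and norm equivalence follow simultaneously. A function $u \in \LtRN$ belongs to $\HmRN$ in the sense of \eqref{def_HmO_Da_full} if and only if the weighted Fourier-side integral $\sum_{\abs{\alpha}\leq m}(4\pi^2)^{\abs{\alpha}}\int \abs{\xi^\alpha}^2 \abs{\F u}^2\, d\xi$ is finite, while it belongs to $\HmRN$ in the sense of \eqref{def_HsO_Bessel} if and only if $\int (1+4\pi^2\abs{\xi}^2)^m \abs{\F u}^2\, d\xi$ is finite. The pointwise inequality makes these two conditions equivalent, and multiplying by $\abs{(\F u)(\xi)}^2$ and integrating transfers the constants $c$ and $C$ directly to an equivalence of the two norms.

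The main technical obstacle is the rigorous justification of the identity $\F(D^\alpha u) = (2\pi i\xi)^\alpha \F u$ for weak rather than classical derivatives. For $u \in \CizO$ this reduces to a standard integration by parts inside the Fourier integral, and the general case follows either by a density argument, exploiting that $\CizO$ is dense in the integer-order $\HmRN$, or by interpreting the identity for tempered distributions and observing that both sides coincide as $\LtRN$-functions. In the reverse direction, one must also verify that whenever $\FI((2\pi i \xi)^\alpha \F u) \in \LtRN$, this function is indeed the weak derivative $D^\alpha u$; this is done by pairing with a test function $\phi \in \CizO$ and invoking Parseval's formula together with the smooth-case identity to produce the required duality $(-1)^{\abs{\alpha}}\spr{u, D^\alpha \phi}_\LtRN$.
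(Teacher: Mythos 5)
Your argument is correct and is essentially the same proof the paper relies on: the paper simply defers to \cite[Theorem~3.16]{McLean_2000}, whose proof is exactly this combination of Plancherel's identity, the Fourier characterization $\F(D^\alpha u) = (2\pi i \xi)^\alpha \F u$ of weak derivatives, and the pointwise two-sided comparison of $\sum_{\abs{\alpha}\leq m}(4\pi^2)^{\abs{\alpha}}\abs{\xi^\alpha}^2$ with $(1+4\pi^2\abs{\xi}^2)^m$ via the binomial and multinomial expansions. Your treatment of the reverse direction (verifying that $\FI((2\pi i\xi)^\alpha\F u)$ really is the weak derivative by pairing with test functions) correctly supplies the one detail that is easy to overlook.
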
    
\begin{proof}
The proof is the same as in \cite[Theorem~3.16]{McLean_2000}, with a minor modification dealing with the additional factor $4 \pi^2$ in the definition of the inner product \eqref{def_HsO_Bessel_inner_v1}.
\end{proof}   

Instead of \eqref{def_HsO_Bessel_norm_v1} the spaces $\HsRN$ are often equipped with an equivalent norm:

\begin{proposition}
For all $s \geq 1$ an equivalent norm to $\norm{\cdot}_\HsRN$ is given by
    \begin{equation}\label{def_HsO_Bessel_norm_v2}
        \newnorm{u}_\HsRN 
        := 
        \norm{\kl{1+\kl{2\pi\abs{\cdot}}^{2s}}^\frac{1}{2} \F u}_\LtRN 
        =
        \kl{\int_\RN \kl{1+\kl{2\pi\abs{\xi}}^{2s}} \abs{(\F u)(\xi)}^2 \, d\xi }^{1/2} \,.
    \end{equation}
\end{proposition}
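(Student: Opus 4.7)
The plan is to reduce the equivalence of the two norms to a pointwise equivalence of the weight functions appearing in the integrals. Writing $t := 4\pi^2\abs{\xi}^2 \geq 0$, the squared norm $\norm{u}_\HsRN^2$ integrates $(1+t)^s\abs{(\F u)(\xi)}^2$, whereas $\newnorm{u}_\HsRN^2$ integrates $(1+t^s)\abs{(\F u)(\xi)}^2$. It therefore suffices to establish positive constants $c_1, c_2 > 0$ (depending only on $s$) with
\begin{equation*}
    c_1 \kl{1 + t^s} \leq (1+t)^s \leq c_2 \kl{1 + t^s} \,,
    \qquad \forall\, t \geq 0 \,,
\end{equation*}
and then to integrate this inequality after multiplying by $\abs{(\F u)(\xi)}^2$.

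For the lower bound $c_1 = 1$, I would argue that for $s \geq 1$ the function $f(t) := (1+t)^s - t^s - 1$ satisfies $f(0) = 0$ and $f'(t) = s\kl{(1+t)^{s-1} - t^{s-1}} \geq 0$ for all $t \geq 0$, since $s - 1 \geq 0$ and $x \mapsto x^{s-1}$ is monotonically nondecreasing on $[0,\infty)$. Hence $(1+t)^s \geq 1 + t^s$. For the upper bound I would split into the two cases $t \leq 1$ and $t \geq 1$: in the first case $(1+t)^s \leq 2^s \leq 2^s(1+t^s)$, while in the second $(1+t)^s \leq (2t)^s = 2^s t^s \leq 2^s(1+t^s)$. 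Thus $c_2 = 2^s$ works uniformly in $t \geq 0$.

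Combining these two pointwise bounds and integrating against the nonnegative density $\abs{(\F u)(\xi)}^2 \, d\xi$ yields
\begin{equation*}
    \newnorm{u}_\HsRN^2 \leq \norm{u}_\HsRN^2 \leq 2^s \newnorm{u}_\HsRN^2 \,,
\end{equation*}
which is precisely the equivalence claim. In particular, $\newnorm{u}_\HsRN < \infty$ if and only if $\norm{u}_\HsRN < \infty$, so both norms are indeed defined on the same space $\HsRN$.

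The main obstacle is the lower pointwise bound $(1+t)^s \geq 1 + t^s$ for general real $s \geq 1$: for integer $s$ it follows immediately from the binomial theorem, but for arbitrary real exponents one needs the monotonicity argument via $f'(t) \geq 0$ outlined above (equivalently, a convexity argument on $\R_+$). The assumption $s \geq 1$ is essential here; for $0 < s < 1$ the inequality reverses, which is why the proposition is restricted to this range.
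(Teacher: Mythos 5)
Your proof is correct and takes essentially the same route as the paper: both reduce the norm equivalence to the two-sided pointwise bound $C_1\kl{1+\kl{2\pi\abs{\xi}}^{2s}} \leq \kl{1+4\pi^2\abs{\xi}^2}^s \leq C_2\kl{1+\kl{2\pi\abs{\xi}}^{2s}}$ and then integrate against $\abs{(\F u)(\xi)}^2$. The only differences lie in the elementary arguments for the two pointwise inequalities (the paper uses convexity of $x \mapsto x^s$ for the upper bound, giving $C_2 = 2^{s-1}$, and a term-by-term comparison for the lower bound, giving $C_1 = 1/2$ rather than your sharper constant $1$ from the monotonicity argument), which is immaterial.
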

\begin{proof}
Let $s\geq 1$ be fixed. In order to show the equivalence between the norms defined in \eqref{def_HsO_Bessel_norm_v1} and \eqref{def_HsO_Bessel_norm_v2} it suffices to show that there exist constants $C_1,C_2 > 0$ such that
    \begin{equation}\label{ineq_Bessel_mult}
        C_1 \kl{1+\kl{2\pi\abs{\xi}}^{2s}} 
        \leq  
        \kl{1+4\pi^2\abs{\xi}^{2}}^s
        \leq
        C_2 \kl{1+\kl{2\pi\abs{\xi}}^{2s}} \,,
        \qquad
        \forall \, \xi \in \RN \,,
    \end{equation}
since then it follows that
    \begin{equation*}
        C_1 \newnorm{u}_\HsRN^2 
        \leq 
        \norm{u}_\HsRN^2 
        \leq 
        C_2 \newnorm{u}_\HsRN^2 \,,
        \qquad
        \forall \, u \in \HsRN \,.
    \end{equation*}
We start with the second inequality in \eqref{ineq_Bessel_mult}: since $x \mapsto x^s$ is convex on $\R_0^+$, we have
    \begin{equation*}
        \kl{1+4\pi^2\abs{\xi}^2}^s 
        =
        2^s \kl{\frac{1}{2}+\frac{1}{2}4\pi^2\abs{\xi}^2}^s
        \leq
        2^{s-1} \kl{1 +\kl{2\pi\abs{\xi}}^{2s}} \,,
        \qquad
        \forall \, \xi \in \RN \,,
    \end{equation*}
and thus the second inequality in \eqref{ineq_Bessel_mult} holds with $C_2 = 2^{s-1}$. On the other hand, since
    \begin{equation*}
        1 \leq \kl{1+4\pi^2\abs{\xi}^2}^s \,, 
        \qquad
        \text{and}
        \qquad
        \kl{2\pi\abs{\xi}}^{2s} \leq \kl{1+4\pi^2\abs{\xi}^2}^s  \,,
        \qquad
        \forall \, \xi \in \RN \,,
    \end{equation*}
it follows that
    \begin{equation*}
        1 + \kl{2\pi\abs{\xi}}^{2s} \leq 2\kl{1+4\pi^2\abs{\xi}^2}^s \,,
        \qquad
        \forall \, \xi \in \RN \,.
    \end{equation*}
Hence, the first inequality in \eqref{ineq_Bessel_mult} holds with $C_1 = 1/2$, which concludes the proof.
\end{proof}

Note that the norm $\newnorm{\cdot}_\HsRN$ defined in \eqref{def_HsO_Bessel_norm_v2} is induced by the inner product
    \begin{equation}\label{def_HsO_Bessel_inner_v2}
        \spr{u,v}_\HsRN := \int_\RN \kl{1+\kl{2\pi\abs{\xi}}^{2s}} (\F u)(\xi)  \overline{(\F v)(\xi)} \, d\xi \,.
    \end{equation}

\subsection{Adjoint Sobolev embedding operators and Hilbert scales}\label{subsect_Hilbert_scales}

We now return to the embedding operator $E_s$ defined in \eqref{embedding} for all $0 \leq s \in \R$, i.e.,
    \begin{equation*}
        E_s \, : \, \HsO \to \LtO \,,
        \qquad
        u \mapsto E_s u := u \,.
    \end{equation*}
Whenever we consider only Sobolev spaces $\HmO$ of integer order $m \in \N$, we write
    \begin{equation*}
        E_m \, : \, \HmO \to \LtO \,,
        \qquad
        u \mapsto E_m u := u \,,
    \end{equation*}
and similarly, when working with the Sobolev spaces $\HmzO$ with $m \in \N$, we define
    \begin{equation*}
        E_{m,0} \, : \, \HmzO \to \LtO \,,
        \qquad
        u \mapsto E_{m,0} u := u \,.
    \end{equation*}
For further reference, the proper definition of an adjoint operator is given in 
\begin{definition}\label{def_Es_adj}
Let $A : X \to Y$ be a bounded, linear operator between two Hilbert spaces $X$ and $Y$. Then the adjoint operator $A^* : Y \to X$ of $A$ is defined by the relation
	\begin{equation*}
		\spr{A^* u, v}_X = \spr{u, A v}_Y \,,
		\qquad
		\forall \, u \in Y \,, v \in X \,.
	\end{equation*}
Hence, for $E_s: \HsO \to \LtO$ the adjoint $E_s^* : \LtO \to \HsO$ is defined by 
	\begin{equation*}
		\spr{E_s^*u,v}_\HsO = \spr{u,E_s v}_\LtO = \spr{u, v}_\LtO \,,
		\qquad
		\forall \, u \in \LtO \,, v \in \HsO \,,
\end{equation*}
and analogously for the embeddings $E_{m} : \HmO \to \LtO$ and $E_{m,0} : \HmzO \to \LtO$.
\end{definition}

It follows from its definition that $E_s^*$ is a bounded linear operator, and it may even be compact, depending on properties of the domain $\Omega$; see e.g.~\cite{Evans_1998,McLean_2000,Adams_1970,Necas_2011}. For example, if $\Omega$ is a bounded domain with a Lipschitz continuous boundary, then it follows from \cite[Theorem~1.4]{Necas_2011} that $E_m : \HmO \to \LtO$ and thus also $E_m^*$ is compact for all $m \in \N$. Since $\HmzO \subset \HmO$ the same is true for $E_{m,0} : \HmzO \to \LtO$.

\begin{remark}
Note that the definition of the operator $E_s^*$ given in Definition~\ref{def_Es_adj} implicitly depends on the specific choice of the inner products on $\HsO$ and $\LtO$. This is important to keep in mind when working with one of the equivalent norms on $\HsO$ discussed above, since they are induced by different inner products and thus the corresponding operator $E_s^*$ is generally different. Hence, when using the notation $\norm{\cdot}_\HsO$ and $\spr{\cdot,\cdot}_\HsO$ to denote a norm and inner product on $\HsO$, respectively, we always have to specify which one of the different equivalent norms and corresponding inner products is meant. Whenever we do not explicitly specify the norm and corresponding inner product, the considered result holds independently of the specific choice.
\end{remark}

Next, we connect the embedding operator $E_s$ to the theory of Hilbert scales, revealing another link between its adjoint $E_s^*$ and the theory of inverse problems. For this, we restate some results found e.g.\ in \cite{Egger_Neubauer_2005,Neubauer_1988,Engl_Hanke_Neubauer_1996,Krein_Petunin_1966}. For this, let $L : D(L) \subseteq X \to X$ be a densely defined, selfadjoint, strictly positive operator on a Hilbert space $X$ such that 
    \begin{equation*}
        \norm{Lu}_X \geq \norm{u}_X \,,
        \qquad
        \forall \, u \in D(L) \,.
    \end{equation*}
Furthermore, for all $s \geq 0$ and $u , v \in \bigcap_{k=0}^\infty D(L^k)$ we define the inner product
    \begin{equation}\label{def_inner_Xs}
        \spr{u,v}_\Xs := \spr{L^s u,L^s v}_X \,.
    \end{equation}
Now, for any $s \geq 0$ the space $\Xs$ is defined as the completion of $\bigcap_{k=0}^\infty D(L^k)$ with respect to the norm $\norm{\cdot}_\Xs$ induced by the inner product $\spr{\cdot,\cdot}_\Xs$ defined in \eqref{def_inner_Xs}. Furthermore, for any $s < 0$ the space $X_s$ is defined as the dual space of $X_{-s}$. The collection of these spaces, i.e., $(\Xs)_{s\in\R}$, is called a Hilbert scale induced by the operator $L$.

It was shown in \cite{Krein_Petunin_1966} that the Sobolev spaces $\HsRN$ form a Hilbert scale. However, when the domain $\Omega \subseteq \RN$ is bounded, this is no longer true due to boundary conditions \cite{Neubauer_1988}. The proof of this interesting fact relies on the following result \cite[Proposition~2.1]{Neubauer_1988}:

\begin{proposition}
Let $X_1$ and $X_2$ be real Hilbert spaces such that $X_2$ is dense in $X_1$ and $\norm{u}_{X_1} \leq \norm{u}_{X_2}$ for all $u \in X_2$. Then $L: D(L) (\subset X_1) \to X_1$ is a densely defined, selfadjoint, strictly positive operator with $D(L) = X_2$ satisfying 
    \begin{equation*}
        \norm{L u}_{X_1} \geq \norm{u}_{X_2} \,, 
        \qquad
        \spr{Lu,Lv}_{X_1} = \spr{u,v}_{X_2} \,,
        \qquad
        \forall u,v \in X_2 \,,
    \end{equation*}
if and only if
    \begin{equation*}
        L = (I^*)^{-1/2} \,,
    \end{equation*}
where $I : X_2 \to X_1$ denotes the embedding operator and $I^* : X_1 \to X_2$ denotes its adjoint. Since $I^*$ is selfadjoint from $X_1 \to X_1$, the operator $(I^*)^{1/2}$ is well-defined.
\end{proposition}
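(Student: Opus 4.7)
The plan is to split the equivalence into two directions after first securing the well-posedness of the construction. Thus I would begin by verifying that the embedding $I:X_2\to X_1$ is a bounded contraction, and that $I^*$, regarded as an operator from $X_1$ to itself via the inclusion $X_2\subset X_1$, is a bounded, selfadjoint, positive, injective contraction on $X_1$: selfadjointness stems from
\begin{equation*}
\spr{I^*u,v}_{X_1} = \spr{I^*u,I^*v}_{X_2} = \spr{u,I^*v}_{X_1},
\qquad
u,v\in X_1;
\end{equation*}
positivity from $\spr{I^*u,u}_{X_1}=\norm{I^*u}_{X_2}^2$; and injectivity from density of $X_2$ in $X_1$. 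The functional calculus then supplies $(I^*)^{1/2}$ as a bounded positive selfadjoint contraction and $(I^*)^{-1/2}$ as a (generally unbounded) selfadjoint strictly positive operator with domain $R((I^*)^{1/2})$.

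For the forward direction, assume $L$ satisfies the stated properties. Setting $u=v$ in $\spr{Lu,Lv}_{X_1}=\spr{u,v}_{X_2}$ gives the sharper equality $\norm{Lu}_{X_1}=\norm{u}_{X_2}$, and together with strict positivity this makes $L^{-1}$ a bounded selfadjoint positive contraction on $X_1$ with range $D(L)=X_2$. The key computation is then: for $u\in X_1$ and $v\in X_2$,
\begin{equation*}
\spr{L^{-2}u,v}_{X_2} = \spr{L(L^{-2}u),Lv}_{X_1} = \spr{L^{-1}u,Lv}_{X_1} = \spr{u,v}_{X_1} = \spr{I^*u,v}_{X_2},
\end{equation*}
where the first equality uses the hypothesis (applied with $L^{-2}u\in X_2=D(L)$) and the third uses selfadjointness of $L^{-1}$ on $X_1$ together with $L^{-1}Lv=v$. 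Varying $v\in X_2$ forces $L^{-2}=I^*$, whence $L=(I^*)^{-1/2}$ by uniqueness of positive selfadjoint square roots.

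For the backward direction, write $T:=I^*$ and first check that $R(T)$ is dense in $X_2$: if $v\in X_2$ is $X_2$-orthogonal to $R(T)$, then $0=\spr{Tu,v}_{X_2}=\spr{u,Iv}_{X_1}$ for every $u\in X_1$, which forces $Iv=0$ and hence $v=0$. For $u=Tu_1$ and $v=Tv_1\in R(T)$, the direct calculation
\begin{equation*}
\spr{Lu,Lv}_{X_1} = \spr{T^{-1}u,v}_{X_1} = \spr{u_1,Iv}_{X_1} = \spr{Tu_1,v}_{X_2} = \spr{u,v}_{X_2}
\end{equation*}
shows that the two inner products agree on $R(T)$. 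To extend this to all of $D(L)=R(T^{1/2})$, I would approximate $u=T^{1/2}w\in D(L)$ by $u_n:=T\kl{T^{-1/2}P_n w}\in R(T)$, where $P_n=\chi_{[1/n,\,\norm{T}]}(T)$ is the spectral projection of $T$ onto the part bounded away from $0$; then $\norm{L(u-u_n)}_{X_1}=\norm{(I-P_n)w}_{X_1}\to 0$. Since $X_2$ and $D(L)$ both embed continuously into $X_1$, contain $R(T)$ as a dense subspace, and carry the same inner product on $R(T)$, they are isometrically isomorphic as completions of $R(T)$, so $D(L)=X_2$ with matching inner product and $\norm{Lu}_{X_1}=\norm{u}_{X_2}$.

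The main obstacle lies in the backward direction, specifically in pinning down $D((I^*)^{-1/2})$ as exactly $X_2$ and not merely some subspace continuously embedded in or containing it. The inner-product calculation on $R(I^*)$ itself is mechanical, but transferring it to the full unbounded operator requires combining the density of $R(I^*)$ in $X_2$ with the spectral-projection approximation above; without either ingredient the two Hilbert-space completions of $R(I^*)$ need not coincide.
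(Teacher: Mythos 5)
Your argument is correct, but note that the paper itself supplies no proof of this proposition at all: it is stated as a quoted result and attributed to \cite[Proposition~2.1]{Neubauer_1988}, so there is nothing in the paper to compare against line by line. What you have written is a complete, self-contained derivation. The forward direction is clean: the identity $\spr{L^{-2}u,v}_{X_2}=\spr{u,v}_{X_1}=\spr{I^*u,v}_{X_2}$ for $v$ ranging over $X_2$, followed by uniqueness of positive square roots, is exactly the right mechanism, and you correctly observe that $\norm{Lu}_{X_1}=\norm{u}_{X_2}$ (not just $\geq$) follows by polarization, which is what makes $L^{-1}$ a contraction with range $X_2$. The backward direction is the genuinely delicate part and you identify it as such; your two ingredients --- density of $\rge{I^*}$ in $X_2$ via the annihilator computation, and density of $\rge{T}$ in $D(L)$ with respect to the graph norm $\norm{L\cdot}_{X_1}$ via spectral projections --- are precisely what is needed. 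The only place where you compress slightly is the final identification ``isometrically isomorphic as completions of $R(T)$, so $D(L)=X_2$'': to conclude set equality rather than abstract isomorphism one should say explicitly that a sequence in $\rge{T}$ that is Cauchy for the common norm converges in both $X_2$ and $(D(L),\norm{L\cdot}_{X_1})$, that both limits agree because both spaces embed continuously and injectively into $X_1$, and that the inclusion $X_2\subseteq D(L)$ uses the closedness of the selfadjoint operator $L$ (equivalently, the completeness of $D(L)$ under the graph norm). You name the continuous embedding into $X_1$ as the relevant hypothesis, so this is a matter of spelling out one more line rather than a gap.
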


From the above proposition we obtain the following 
\begin{corollary}
For any $s \geq 0$ let $E_s : \HsO \to \LtO$ and $E_s^*:\LtO \to \HsO$ be the embedding and its adjoint as defined in \eqref{embedding} and Definition~\ref{def_Es_adj}, respectively. Then
    \begin{equation*}
        \spr{u,v}_\HsO 
        = \spr{(E_s^*)^{-1/2}u,(E_s^*)^{-1/2}v}_\LtO 
        = \spr{(E_s^*)^{-1} u,v}_\LtO \,,
        \qquad 
        \forall \, u,v \in \HsO \,,
    \end{equation*}
as well as
    \begin{equation}\label{norm_Hs_Es}
        \norm{u}_\HsO = \norm{(E_s^*)^{-1/2} u}_\LtO \,,
        \qquad 
        \forall \, u \in \HsO \,.
    \end{equation}
\end{corollary}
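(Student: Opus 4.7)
The plan is to apply the preceding Proposition directly with $X_1 = \LtO$ and $X_2 = \HsO$, identifying the abstract embedding $I : X_2 \to X_1$ with $E_s$ and hence $I^*$ with $E_s^*$. First I would verify the two hypotheses of that proposition. Density of $\HsO$ in $\LtO$ holds because $\CizO \subseteq \HsO$ is already dense in $\LtO$. The norm inequality $\norm{u}_\LtO \leq \norm{u}_\HsO$ is immediate from \eqref{def_HmO_Da_full_norm} in the integer case, and from \eqref{def_HsO_Bessel_norm_v1} in the fractional case because $(1+4\pi^2\abs{\xi}^2)^s \geq 1$ for $s \geq 0$; this is also where the remark after Proposition~\ref{prop_norm_equivalence_Hm} matters, so I would state explicitly that any of the equivalent inner products used to define $E_s^*$ must satisfy this inequality (after rescaling by a constant if necessary, which does not change the identities to be proved).

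With the hypotheses verified, the preceding proposition produces a densely defined, selfadjoint, strictly positive operator $L$ on $\LtO$ with $\dom{L} = \HsO$ such that
\begin{equation*}
    \spr{Lu,Lv}_\LtO = \spr{u,v}_\HsO \,,
    \qquad \forall\, u,v \in \HsO \,,
\end{equation*}
and with the explicit identification $L = (E_s^*)^{-1/2}$. Substituting this identification into the displayed identity yields the first equality of the corollary directly.

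For the second equality I would use that $(E_s^*)^{-1/2}$ is selfadjoint on $\LtO$ (as a negative power of the positive selfadjoint operator $E_s^*$ understood as an operator $\LtO \to \LtO$ via composition with $E_s$). This allows moving one factor to the other argument of the $\LtO$ inner product:
\begin{equation*}
    \spr{(E_s^*)^{-1/2}u,(E_s^*)^{-1/2}v}_\LtO
    = \spr{(E_s^*)^{-1/2}(E_s^*)^{-1/2}u, v}_\LtO
    = \spr{(E_s^*)^{-1}u, v}_\LtO \,.
\end{equation*}
Finally, the norm identity \eqref{norm_Hs_Es} follows by setting $v=u$ in the first equality and taking square roots.

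The only real obstacle is the domain bookkeeping: $(E_s^*)^{-1/2}$ is unbounded on $\LtO$, so one must check that $u \in \HsO$ lies in its domain of definition. This is precisely the content of the assertion $\dom{L} = X_2$ in the preceding proposition, so no additional argument is required beyond citing it.
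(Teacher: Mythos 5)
Your proposal is correct and follows essentially the same route the paper intends: the corollary is stated without a separate proof precisely because it is a direct application of the preceding proposition with $X_1 = \LtO$, $X_2 = \HsO$, $I = E_s$, and $L = (E_s^*)^{-1/2}$. Your additional verification of the density and norm-inequality hypotheses, and the selfadjointness step for the second equality, are exactly the details the paper leaves implicit.
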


\section{Adjoint Embeddings and BVPs}\label{sect_PDEs}

In this section, we consider some representations of the adjoint embedding $E_s^*$ in terms of the solution of variational problems related to weak solutions of certain boundary value problems (BVPs). Our first result is a direct consequence of Definition~\ref{def_Es_adj}.

\begin{proposition}
Let $0\leq s \in \R$, $u \in \LtO$, and define the bilinear and linear forms
    \begin{equation}\label{def_a_l}
        a(z,v) := \spr{z,v}_\HsO \,,
        \qquad
        \text{and}
        \qquad
        l_u(v) := \spr{u,v}_\LtO \,,
        \qquad
        \forall \, v, z \in \HsO \,.
    \end{equation}
Then the element $E_s^* u$ is given as the unique solution $z \in \HsO$ of
    \begin{equation}\label{var_prob}
        a(z,v) = l_u(v) \,, 
        \qquad \forall \, v \in \HsO \,.
    \end{equation}
Similarly, for $m \in \N$ the element $E_{m,0}^* u$ is the unique solution $z \in \HmzO$ of
    \begin{equation}\label{var_prob_hom}
        a(z,v) = l_u(v) \,, 
        \qquad \forall \, v \in \HmzO  \,.
    \end{equation} 
\end{proposition}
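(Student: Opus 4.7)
The plan is to observe that this proposition is essentially a reformulation of Definition~\ref{def_Es_adj}, so the proof reduces to verifying existence (which is immediate from the definition of the adjoint) and uniqueness (which follows because $a$ coincides with the inner product on $\HsO$, so it is automatically coercive and bounded).

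First, I would establish existence by setting $z := E_s^* u \in \HsO$ and invoking Definition~\ref{def_Es_adj}. By definition of the adjoint, $\spr{E_s^* u, v}_\HsO = \spr{u, E_s v}_\LtO$ for every $v \in \HsO$. Since $E_s v = v$, the right-hand side equals $\spr{u, v}_\LtO = l_u(v)$, while the left-hand side is $a(E_s^* u, v)$. Hence $z = E_s^* u$ solves \eqref{var_prob}.

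For uniqueness, I would suppose $z_1, z_2 \in \HsO$ both solve \eqref{var_prob}. Subtracting the two identities yields $a(z_1 - z_2, v) = 0$ for all $v \in \HsO$. Choosing $v := z_1 - z_2$ and recalling that $a(\cdot,\cdot) = \spr{\cdot,\cdot}_\HsO$ is an inner product inducing the norm on $\HsO$, we obtain $\norm{z_1 - z_2}_\HsO^2 = 0$, whence $z_1 = z_2$. Alternatively, one could cite Lax--Milgram, but the direct argument is shorter and avoids any appeal to external machinery.

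The argument for $E_{m,0}^*$ is completely analogous: by Definition~\ref{def_Es_adj} one has $\spr{E_{m,0}^* u, v}_\HmzO = \spr{u, v}_\LtO$ for every $v \in \HmzO$, and since $a$ restricted to $\HmzO \times \HmzO$ is again the inner product on $\HmzO$ (under whichever of the equivalent inner products from Proposition~\ref{prop_norm_equivalence_Hm} one has fixed), the same existence/uniqueness argument goes through verbatim. There is no real obstacle here; the only subtlety worth flagging in the write-up is that the statement is inherently tied to the particular inner product chosen on $\HsO$ or $\HmzO$, as already emphasized in the remark following Definition~\ref{def_Es_adj}.
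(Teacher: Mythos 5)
Your proposal is correct and follows essentially the same route as the paper: both treat the statement as a direct restatement of Definition~\ref{def_Es_adj} and settle uniqueness via the coercivity of $a(\cdot,\cdot)=\spr{\cdot,\cdot}_{\HsO}$ (the paper cites Lax--Milgram, whose hypotheses are trivially satisfied here, while you unwind that to the equivalent one-line argument with $v=z_1-z_2$). Your closing remark about the dependence on the chosen inner product matches the paper's own caveat, so there is nothing to add.
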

\begin{proof}
This follows from the Definition~\ref{def_Es_adj} of $E_s^*$ and $E_{m,0}^*$ and the definitions of the bilinear and linear forms. The uniqueness of the solutions of \eqref{var_prob} and \eqref{var_prob_hom} also follows from the Lax-Milgram Lemma \cite{Evans_1998}, the assumptions of which are trivially satisfied.
\end{proof}

Note that the above result holds for all equivalent norms and corresponding inner products on $\HsO$ and $\HmzO$, as long as they are used consistently both in \eqref{def_a_l} and Definition~\ref{def_Es_adj}. Hence, in practice the adjoint embedding operators $E_s^*$ and $E_{m,0}^*$ are often implemented using suitable finite element discretizations of the variational problems \eqref{var_prob} and \eqref{var_prob_hom}; cf.~Section~\ref{sect_discrete}. This is also motivated by the fact that for $m \in \N$ and for particular choices of norms on $\HmO$ and $\HmzO$, the variational problems \eqref{var_prob} and \eqref{var_prob_hom} are related to weak solutions of certain BVPs. For making this more precise, we first consider the following proposition adapted from \cite[Theorem~10.2]{Agmon_1965}.

\begin{proposition}\label{prop_Agmon}
Let $m \in \N$ and let $\Omega \subset \RN$ be a bounded domain with a sufficiently smooth boundary $\partial \Omega$. Furthermore, let the constants $c_\alpha \in \Kl{0,1}$ be such that for all $\abs{\alpha} = m$ there holds $c_\alpha = 1$, and let the linear differential operator $B$ be defined by
    \begin{equation*}
        B z := \sum\limits_{0\leq \abs{\alpha} \leq m} (-1)^{\abs{\alpha}} c_{\alpha} D^{2\alpha} z \,.
    \end{equation*}
Then there exist $m$ linear differential operators $N_k$ of order $k$, $m \leq k \leq 2m-1$, i.e.,  
    \begin{equation*}
        N_k z = \sum\limits_{0\leq \abs{\alpha} \leq k} d_{k,\alpha} D^{\alpha} z \,,
    \end{equation*}
with functions $d_{k,\alpha}$ defined on $\partial \Omega$, such that for all $v,z \in C^{2m}(\overline{\Omega})$ there holds
    \begin{equation}\label{eq_var_diff}
        \sum\limits_{0\leq \abs{\alpha} \leq m} c_{\alpha} \spr{D^\alpha z, D^\alpha v }_\LtO 
        = 
        \spr{Bz,v}_\LtO 
        +
        \sum\limits_{j=0}^{m-1} \int_{\partial \Omega} \overline{\frac{\partial^j v}{\partial \nv^j}} N_{2m-1-j} z \, dS \,. 
    \end{equation}
Here, $\partial^j / \partial \nv^j$ denotes the $j$th derivative in the direction of the exterior normal vector $\nv$ of $\partial \Omega$, and the surface $\partial \Omega$ is non-characteristic for the differential operators $N_k$. 
\end{proposition}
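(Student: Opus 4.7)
The plan is to derive \eqref{eq_var_diff} by repeated integration by parts on each term of the left-hand sum and then reorganising the resulting boundary contributions. For a fixed multiindex $\alpha$ with $0 \leq \abs{\alpha} \leq m$, I would transform $\spr{D^\alpha z, D^\alpha v}_\LtO$ by integrating by parts $\abs{\alpha}$ times, repeatedly applying the identity
\begin{equation*}
\int_\Omega (\partial_{x_i} f)\, \overline{g}\, dx = -\int_\Omega f\, \overline{\partial_{x_i} g}\, dx + \int_{\partial \Omega} f\, \overline{g}\, n_i\, dS
\end{equation*}
to shift one derivative at a time from the $\overline{D^\alpha v}$-factor onto $z$. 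The resulting volume contribution is $(-1)^{\abs{\alpha}} \int_\Omega D^{2\alpha} z\, \overline{v}\, dx$; weighting by $c_\alpha$ and summing over $\alpha$ produces precisely $\spr{Bz, v}_\LtO$ by definition of $B$. All of this is rigorous because $z, v \in C^{2m}(\overline{\Omega})$ and $\partial \Omega$ is smooth enough for the divergence theorem to apply.

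The more delicate step is to bring the accumulated boundary integrals into the form on the right-hand side of \eqref{eq_var_diff}. After the integration by parts, each boundary term has the shape $\int_{\partial\Omega} (D^\beta z)\, \overline{(D^\gamma v)}\, p(\nv)\, dS$ with $\abs{\beta} + \abs{\gamma} = 2\abs{\alpha} - 1$, $\abs{\gamma} \leq \abs{\alpha} - 1 \leq m-1$, and $p(\nv)$ a polynomial in components of the outer unit normal. On $\partial \Omega$ I would decompose each derivative of $v$ into normal and tangential parts, writing $D^\gamma v = \sum_{j=0}^{\abs{\gamma}} q_j\, \tfrac{\partial^j v}{\partial \nv^j} + (\text{purely tangential derivatives of }v)$, with smooth coefficients $q_j$ depending on the local geometry of $\partial \Omega$. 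Since $\partial \Omega$ is a smooth closed manifold without boundary, the purely tangential derivatives of $v$ can be moved back onto the other factor by integration by parts along $\partial \Omega$, producing no further boundary contributions. Collecting the results by the order $j$ of the normal derivative of $v$ yields the claimed expression $\sum_{j=0}^{m-1} \int_{\partial \Omega} \overline{\tfrac{\partial^j v}{\partial \nv^j}}\, N_{2m-1-j} z\, dS$, where each $N_{2m-1-j}$ is a linear differential operator in $z$ of order at most $2m-1-j$ with coefficients $d_{k,\alpha}$ supported on $\partial \Omega$.

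The step I expect to be the main obstacle is verifying that each $N_k$ actually has order exactly $k$ and that $\partial \Omega$ is non-characteristic for it, rather than some top-order coefficient vanishing in the reorganisation. To resolve this, I would track the highest-order contributions through the computation: these arise exclusively from the $\abs{\alpha} = m$ summands on the left of \eqref{eq_var_diff}, where the hypothesis $c_\alpha = 1$ is used. A direct calculation of the principal symbol of $N_k$ in the conormal direction then produces a nonzero multiple of $\abs{\nv}^{2m}$, and since $\abs{\nv} = 1$ this symbol is nowhere vanishing on $\partial \Omega$, which is exactly the non-characteristic condition. No density or regularity argument is required, since the identity is only asserted for $v, z \in C^{2m}(\overline{\Omega})$, so all formal integrations by parts in $\Omega$ and along $\partial \Omega$ are rigorously justified from the outset.
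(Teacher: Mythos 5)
The paper does not actually prove this proposition: it is stated as an adaptation of \cite[Theorem~10.2]{Agmon_1965} and used as a black box, so your self-contained argument is necessarily a different route --- in essence a reconstruction of Agmon's own proof (repeated integration by parts to produce $\spr{Bz,v}_\LtO$, reorganisation of the boundary terms via a normal/tangential splitting with tangential integration by parts along the closed manifold $\partial\Omega$, and a leading-coefficient computation for the non-characteristic claim). The plan is sound, but two points need tightening. First, the decomposition you write for $D^\gamma v$ on $\partial\Omega$ --- pure normal derivatives plus ``purely tangential derivatives of $v$'' --- is not a valid decomposition once $\abs{\gamma}\geq 2$: a mixed derivative such as a tangential derivative of $\partial v/\partial \nv$ is neither. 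The correct statement is that $D^\gamma v$ restricted to $\partial\Omega$ equals $\sum_{j=0}^{\abs{\gamma}} T_{\abs{\gamma}-j}\kl{\partial^j v/\partial \nv^j}$ with $T_k$ a tangential differential operator of order at most $k$ on $\partial\Omega$; it is these tangential operators, not tangential derivatives of $v$ itself, that get moved onto the $z$-factor by integration by parts over the closed boundary, and the bookkeeping $\abs{\beta}+(\abs{\gamma}-j)=2\abs{\alpha}-1-j\leq 2m-1-j$ then yields the claimed order of $N_{2m-1-j}$. Second, the symbol you quote for the non-characteristic check has the wrong homogeneity: $N_k$ has order $k$, so its principal symbol at the conormal direction is of degree $k$, not $2m$. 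What is actually needed is that the coefficient of $\partial^{2m-1-j}/\partial \nv^{2m-1-j}$ in $N_{2m-1-j}$ is a nonzero (indeed positive) combinatorial sum, and your observation that only the $\abs{\alpha}=m$ summands can contribute at that order --- which is exactly where the hypothesis $c_\alpha=1$ enters --- is the right way to see this. Both issues are repairable, and with them repaired your argument is a legitimate self-contained proof of the statement the paper merely cites.
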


For $m \in \N$ and coefficients $c_\alpha \in \Kl{0,1}$ with $c_\alpha = 1$ for $\abs{\alpha} = m$ consider the BVP
    \begin{equation*}
    \begin{split} 
        \sum\limits_{0\leq \abs{\alpha} \leq m} (-1)^{\abs{\alpha}} c_{\alpha} D^{2\alpha} z(x) 
        &= 
        u(x) \,, 
        \qquad \forall \, x \in \Omega\,,
        \\
        N_{2m-1-j} z(x) &= 0 \,, 
        \qquad \forall \, x \in \partial \Omega \,, \, 0 \leq j \leq m-1 \,.
    \end{split}    
    \end{equation*}
Due to \eqref{eq_var_diff} the variational problem associated to this boundary value problem is  
    \begin{equation*}
        \sum\limits_{0\leq \abs{\alpha} \leq m} c_{\alpha} \spr{D^\alpha z, D^\alpha v}_\LtO
        = 
        \spr{u,v}_\LtO \,,
        \qquad
        \forall \,
        v \in \HmO \,.
    \end{equation*}
Comparing this with \eqref{var_prob} and the definition \eqref{def_HmO_Da_full_inner} of $\spr{\cdot,\cdot}_\HmO$ we obtain the following

\begin{proposition}\label{prop_PDE_HmO_Da_full}
Let $m \in \N$ and let $\Omega \subset \RN$ be a bounded domain with a sufficiently smooth boundary $\partial \Omega$. Furthermore, let $\HmO$ be equipped with the norm $\norm{\cdot}_\HmO$ defined in \eqref{def_HmO_Da_full_norm} and let $\spr{\cdot,\cdot}_\HmO$ denote its corresponding inner product given in \eqref{def_HmO_Da_full_inner}. Then there exist $m$ linear differential operators $N_k$ of order $k$ with $m \leq k \leq 2m-1$ which are non-characteristic on $\partial \Omega$, such that for each $u \in \LtO$ the element $E_m^* u$ is the unique solution $z$ of the variational problem associated to the BVP
    \begin{equation}\label{PDE_norm_full}
    \begin{split}
         \sum\limits_{0\leq \abs{\alpha} \leq m} (-1)^{\abs{\alpha}} D^{2\alpha} z(x) &= u(x) \,,
         \qquad
         \forall \, x \in \Omega \,,
         \\
         N_{2m-1-j} z(x) &= 0 \,, 
        \qquad \forall \, x \in \partial \Omega \,, \, 0 \leq j \leq m-1 \,.
    \end{split}
    \end{equation}
\end{proposition}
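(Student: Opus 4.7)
The plan is to combine the variational characterization given in the first proposition of this section with the integration-by-parts identity supplied by Proposition~\ref{prop_Agmon}. Concretely, set $z := E_m^* u$. By that earlier proposition applied with $s = m$, $z$ is the unique element of $\HmO$ satisfying
\begin{equation*}
    \spr{z,v}_\HmO = \spr{u,v}_\LtO \,, \qquad \forall \, v \in \HmO \,,
\end{equation*}
where $\spr{\cdot,\cdot}_\HmO$ is the full inner product from \eqref{def_HmO_Da_full_inner}. The task is to recognize this identity as the weak formulation of the BVP \eqref{PDE_norm_full}.

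For this, I would invoke Proposition~\ref{prop_Agmon} with the particular choice $c_\alpha = 1$ for all $0 \leq \abs{\alpha} \leq m$. The resulting operator $B$ coincides with the differential expression appearing in \eqref{PDE_norm_full}, and the proposition provides $m$ linear differential operators $N_k$ of orders $m \leq k \leq 2m-1$ on $\partial \Omega$ such that
\begin{equation*}
    \spr{z,v}_\HmO = \spr{Bz,v}_\LtO + \sum_{j=0}^{m-1} \int_{\partial \Omega} \overline{\frac{\partial^j v}{\partial \nv^j}} \, N_{2m-1-j} z \, dS
\end{equation*}
for all $z, v \in C^{2m}(\overline{\Omega})$. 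Since $\partial \Omega$ is sufficiently smooth, $C^{2m}(\overline{\Omega})$ is dense in $\HmO$, so this identity extends by continuity and gives the correct interpretation of the right-hand side for general $z \in \HmO$ (with boundary terms understood in the appropriate trace sense).

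Substituting the identity into the variational equation yields
\begin{equation*}
    \spr{Bz - u, v}_\LtO + \sum_{j=0}^{m-1} \int_{\partial \Omega} \overline{\frac{\partial^j v}{\partial \nv^j}} \, N_{2m-1-j} z \, dS = 0 \,, \qquad \forall \, v \in \HmO \,.
\end{equation*}
Testing first against $v \in \CizO \subset \HmzO$ kills every boundary contribution and forces $Bz = u$ in $\LtO$, which is precisely the PDE in \eqref{PDE_norm_full}. What remains after this step is the vanishing of the boundary sum for every $v \in \HmO$; invoking the surjectivity of the trace operator $\HmO \to \prod_{j=0}^{m-1} H^{m-j-1/2}(\partial \Omega)$ and the fact that $\partial \Omega$ is non-characteristic for each $N_{2m-1-j}$, one can prescribe the traces $\partial^j v / \partial \nv^j$ independently, which decouples the boundary contributions and yields the $m$ natural boundary conditions $N_{2m-1-j} z = 0$ on $\partial \Omega$. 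Uniqueness is inherited from the preceding proposition.

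The step I expect to be the main obstacle is the last one, namely justifying that the $m$ boundary terms vanish \emph{individually} rather than just in aggregate. This requires the trace-surjectivity together with the non-characteristic property of the $N_k$, both of which rely crucially on the assumed smoothness of $\partial \Omega$; the rest of the argument is a direct combination of Proposition~\ref{prop_Agmon} with the variational identity \eqref{var_prob}.
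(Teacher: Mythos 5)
Your core idea coincides with the paper's: apply Proposition~\ref{prop_Agmon} with $c_\alpha = 1$ for all $\alpha$ and match the resulting identity against the variational characterization \eqref{var_prob} of $E_m^* u$. The difference is one of direction, and it matters. The paper starts from the BVP \eqref{PDE_norm_full}, writes down its associated variational problem (multiply by $v$, integrate, use \eqref{eq_var_diff} and the boundary conditions to eliminate the boundary integrals), and observes that the result is exactly \eqref{var_prob} with the inner product \eqref{def_HmO_Da_full_inner} --- at which point the proposition is proved, since the statement only claims that $E_m^* u$ solves the \emph{variational problem associated to} the BVP. You instead go from the weak to the strong form: you try to show that $z = E_m^* u$ literally satisfies $Bz = u$ in $\LtO$ and the natural boundary conditions $N_{2m-1-j}z = 0$ individually. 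That is a strictly stronger assertion than the one being proved, and it is precisely in those extra steps that your argument has gaps. The identity \eqref{eq_var_diff} is stated for $z \in C^{2m}(\overline{\Omega})$; extending it ``by continuity'' works in the test function $v$, but not in $z$, because for a general $z \in \HmO$ the expression $Bz$ is a distribution of order $2m$ and the operators $N_{2m-1-j}z$ involve derivatives of order up to $2m-1$, which have no traces on $\partial\Omega$ without first establishing elliptic regularity $z \in H^{2m}(\Omega)$ (the paper only does this in special cases, e.g.\ Example~\ref{example_PDE_01} and Proposition~\ref{prop_PDE_HmO_seminorm}). Likewise, the decoupling of the boundary terms via trace surjectivity presupposes that those boundary terms are well defined. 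None of this is needed for the statement as written: your first displayed identity together with the observation that, for $v \in \HmO$ and $c_\alpha \equiv 1$, the left-hand side of \eqref{eq_var_diff} is $\spr{z,v}_\HmO$ already completes the proof. I would therefore keep your setup and drop (or clearly mark as a formal/heuristic derivation of the strong form) everything from ``Substituting the identity'' onward.
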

\begin{proof}
Applying Proposition~\ref{prop_Agmon} with $c_\alpha = 1$ for all $\alpha$ we find that there exist $m$ linear differential operators $N_k$ of order $k$ with $m\leq k \leq 2m-1$ such that there holds
    \begin{equation}\label{eq_helper_4}
        \sum\limits_{0\leq \abs{\alpha} \leq m} \spr{D^\alpha z, D^\alpha v }_\LtO 
        = 
        \spr{Bz,v}_\LtO 
        +
        \sum\limits_{j=0}^{m-1} \int_{\partial \Omega} \overline{\frac{\partial^j v}{\partial \nv^j}} N_{2m-1-j} z \, dS \,, 
    \end{equation}
for all $v,z \in C^{2m}(\overline{\Omega})$, where the linear differential operator $B$ is defined by
    \begin{equation*}
        B z := \sum\limits_{0\leq \abs{\alpha} \leq m} (-1)^{\abs{\alpha}} D^{2\alpha} z \,.
    \end{equation*}
Next, consider the variational problem associated to the BVP \eqref{PDE_norm_full}, which is obtained by multiplying the differential equation with a test function $v$ and integrating, i.e.,
    \begin{equation*}
        \spr{Bz,v}_\LtO = \spr{u,v}_\LtO \,.
    \end{equation*}
Together with \eqref{eq_helper_4} and using the boundary conditions $N_{2m-1-j} z(x) = 0$ for all $x \in \partial \Omega$ and $0\leq j \leq m-1$ we find that the variational problem associated to \eqref{PDE_norm_full} is given by
    \begin{equation*}
        \sum\limits_{0\leq \abs{\alpha} \leq m} \spr{D^\alpha z, D^\alpha v }_\LtO =
        \spr{u,v}_\LtO \,,
        \qquad
        \forall \, v \in \HmO \,.
    \end{equation*}
Comparing this with \eqref{var_prob} and the definition \eqref{def_HmO_Da_full_inner} of $\spr{\cdot,\cdot}_\HmO$ find that $E_m^*u$ is in fact the unique solution of exactly this variational problem, which yields the assertion.
\end{proof}

We demonstrate how to derive explicit expressions for the operators $N_{2m-1-j}$ in

\begin{example}\label{example_PDE_01}
Let the domain $\Omega \subset \RN$ be bounded with $\partial \Omega \in C^1$, let $\HmO$ be equipped with the norm \eqref{def_HmO_Da_full_norm} corresponding to the inner product \eqref{def_HmO_Da_full_inner}, and let $E_1 : H^1(\Omega) \to \LtO$ denote the embedding operator. Then Green's formula \cite{Evans_1998} yields
    \begin{equation*}
        \spr{\sum\limits_{\abs{\alpha} =1} (-1)^{\abs{\alpha}} D^{2\alpha} z , v }_\LtO  
        =
        \spr{-\laplace z , v}_\LtO 
        =
        \sum\limits_{\abs{\alpha} = 1} \spr{D^\alpha z, D^\alpha v}_\LtO 
        -
        \int_{\partial\Omega} v \frac{\partial z}{\partial \nv} \, dS \,,
    \end{equation*}
and it follows by comparing this with \eqref{eq_var_diff} that $N_1 z = \partial z /\partial \nv$. Due to Proposition~\ref{prop_PDE_HmO_Da_full} for each $u \in \LtO$ the element $E_1^* u$ is thus given as the unique weak solution $z$ of  
    \begin{equation}\label{PDE_example_Green}
    \begin{split}
        - \laplace z(x) + z(x) &= u(x) \,, 
        \qquad 
        \forall \, x \in \Omega \,,
        \\
        \frac{\partial}{\partial \nv} z(x) &=0 \,,
        \qquad
        \forall \, x \in \partial \Omega \,.
    \end{split}
    \end{equation}
If the boundary regularity $\partial \Omega \in C^2$ holds, then it follows that $E_1^* u = z \in H^2(\Omega)$, and thus $E_1^* u$ is also a solution of \eqref{PDE_example_Green}, see e.g.\ \cite[Chapter~6.3, Theorem~4]{Evans_1998}. This applies in particular to $\Omega = (a,b) \subset \R$. If the boundary $\partial\Omega$ is not $C^2$, then it is still often possible to derive regularity estimates for the weak solution of \eqref{PDE_example_Green}, which typically lead to estimates of the form $E_1^*u = z \in H^s(\Omega)$ for some $s \in [1,2]$; see e.g.\ \cite{Savare_1997,Costable_Dauge_Nicaise_2010,Gilbarg_Trudinger_1998,Necas_2011,McLean_2000}.
\end{example}

When using the equivalent norm \eqref{def_HmO_Da_simple_norm} on $\HmO$ we obtain the following

\begin{proposition}
Let $m \in \N$, let the domain $\Omega \subseteq \RN$ be bounded and satisfy the weak cone condition, and let the boundary $\partial \Omega$ be sufficiently smooth. Furthermore, let $\HmO$ be equipped with the norm $\newnorm{\cdot}_\HmO$ defined in \eqref{def_HmO_Da_simple_norm} and let $\spr{\cdot,\cdot}_\HmO$ denote its corresponding inner product given in \eqref{def_HmO_Da_simple_inner}. Then there exist $m$ linear differential operators $N_k$ of order $k$ with $m \leq k \leq 2m-1$ which are non-characteristic on $\partial \Omega$, such that for each $u \in \LtO$ the element $E_m^* u$ is the unique solution $z$ of the variational problem associated to the BVP
    \begin{equation}\label{PDE_norm_simple}
    \begin{split}
        (-1)^{m} \sum\limits_{\abs{\alpha} = m}  D^{2\alpha} z(x) + z(x) &= u(x) \,,
        \qquad
        \forall \, x \in \Omega \,,
        \\
        N_{2m-1-j} z(x) &= 0 \,, 
        \qquad \forall \, x \in \partial \Omega \,, \, 0 \leq j \leq m-1 \,.
    \end{split}
    \end{equation}
\end{proposition}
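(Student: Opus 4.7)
The plan is to mirror the proof of Proposition~\ref{prop_PDE_HmO_Da_full}, but with a different choice of the coefficients $c_\alpha$ in Proposition~\ref{prop_Agmon} so as to match the inner product \eqref{def_HmO_Da_simple_inner} instead of \eqref{def_HmO_Da_full_inner}. Specifically, I would apply Proposition~\ref{prop_Agmon} with
\begin{equation*}
    c_0 := 1 \,, \qquad c_\alpha := 1 \,\, \text{for } \abs{\alpha} = m \,, \qquad c_\alpha := 0 \,\, \text{for } 0 < \abs{\alpha} < m \,.
\end{equation*}
Since $c_\alpha = 1$ for $\abs{\alpha} = m$, the hypotheses of Proposition~\ref{prop_Agmon} are satisfied and we obtain $m$ differential operators $N_k$ of order $k$, $m \leq k \leq 2m-1$, non-characteristic on $\partial\Omega$, such that
\begin{equation*}
    \spr{z,v}_\LtO + \sum\limits_{\abs{\alpha} = m} \spr{D^\alpha z, D^\alpha v}_\LtO
    = \spr{Bz, v}_\LtO + \sum\limits_{j=0}^{m-1} \int_{\partial\Omega} \overline{\frac{\partial^j v}{\partial \nv^j}} N_{2m-1-j} z \, dS \,,
\end{equation*}
for all $v, z \in C^{2m}(\overline\Omega)$, where $B z = z + (-1)^m \sum_{\abs{\alpha}=m} D^{2\alpha} z$ is precisely the differential operator appearing in \eqref{PDE_norm_simple}.

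Next, I would observe that by the very definition of the inner product \eqref{def_HmO_Da_simple_inner} the left-hand side equals $\spr{z,v}_\HmO$. Consequently, after multiplying the PDE in \eqref{PDE_norm_simple} by a test function $v \in \HmO$, integrating over $\Omega$, and substituting the boundary conditions $N_{2m-1-j} z = 0$ on $\partial \Omega$, the variational formulation of \eqref{PDE_norm_simple} collapses to
\begin{equation*}
    \spr{z,v}_\HmO = \spr{u,v}_\LtO \,, \qquad \forall \, v \in \HmO \,.
\end{equation*}
This is exactly \eqref{var_prob} for $s = m$ with the inner product \eqref{def_HmO_Da_simple_inner}. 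Since by Proposition~\ref{prop_norm_equivalence_Hm} the weak cone condition guarantees that $\newnorm{\cdot}_\HmO$ is equivalent to $\norm{\cdot}_\HmO$, the space $\HmO$ equipped with \eqref{def_HmO_Da_simple_inner} is still a Hilbert space, and the first proposition of this section (whose proof only invoked Lax--Milgram, which applies under any equivalent inner product) yields that $E_m^* u = z$ is the unique solution.

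The main subtlety, and the only place where care is needed beyond a direct transcription of the previous argument, is the verification that Proposition~\ref{prop_Agmon} is indeed applicable for this non-standard choice of $c_\alpha$ with all intermediate coefficients vanishing. This amounts to checking that the associated bilinear form is coercive on $\HmO$, which follows from the equivalence of norms under the weak cone condition, together with the regularity assumption on $\partial\Omega$ needed to construct the trace operators $N_k$ and to guarantee that they are non-characteristic. Once these hypotheses are in place, the derivation is otherwise identical in structure to that of Proposition~\ref{prop_PDE_HmO_Da_full}.
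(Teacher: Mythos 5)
Your proposal is correct and follows exactly the route the paper takes: the paper's proof is precisely the one-line remark that the argument of Proposition~\ref{prop_PDE_HmO_Da_full} goes through with $c_\alpha = 1$ for $\abs{\alpha} = m$ and $\abs{\alpha}=0$ and $c_\alpha = 0$ otherwise, which is what you carry out in detail. (Your worry about applicability of Proposition~\ref{prop_Agmon} for this choice is moot, since its hypotheses explicitly allow $c_\alpha \in \Kl{0,1}$ with only the top-order coefficients required to equal one.)
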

\begin{proof}
This follows analogously to Proposition~\ref{prop_PDE_HmO_Da_full}, now using the choice $c_\alpha = 0$ for all multiindices $0 < \abs{\alpha} < m$, as well as $c_\alpha = 1$ for all $\abs{\alpha} = m$ and $\abs{\alpha} = 0$.
\end{proof}

The commonly encountered special case $N=1$ is considered in the following

\begin{example}
Let $m \in \N$, let the domain $\Omega \subset \R$ be bounded, let $\HmO$ be equipped with the norm \eqref{def_HmO_Da_simple_norm}, and let $E_m : \HmO \to \LtO$ be the embedding operator. Then
    \begin{equation*}
        \spr{ (-1)^m D^{2m} z, v }_\LtO 
        =
        \spr{D^m z, D^m v}_\LtO
        +
        \sum_{j=0}^{m-1} (-1)^{m+j}\int_{\partial \Omega} D^{2m-1-j} z D^{j} v \, \nv \, dS \,,
    \end{equation*}
and it follows by comparing this with \eqref{eq_var_diff} that $N_{k} = (-1)^{k+2-m} D^{k}$. Hence, due to Proposition~\ref{prop_PDE_HmO_Da_full} for each $u \in \LtO$ the element $E_m^* u$ is the unique weak solution $z$ of 
    \begin{equation*}
    \begin{split}
        (-1)^m D^{2m} z (x) + z(x) &= u(x) \,, 
        \qquad 
        \forall \, x \in \Omega \,,
        \\
        D^j z (x) &=0 \,,
        \qquad
        \forall \, x \in \partial \Omega \,, \, 0 \leq j \leq m-1 \,.
    \end{split}
    \end{equation*}
Using partial integration it can be seen that $E_m^* u = z \in H^{2m}(\Omega)$; see also \cite{Gilbarg_Trudinger_1998,Necas_2011,Agmon_1965}.
\end{example}

Furthermore, when using \eqref{def_seminorm_Hm} as an equivalent norm on $\HmzO$ we obtain

\begin{proposition}\label{prop_PDE_HmO_seminorm}
Let $m \in \N$ and let the domain $\Omega \subseteq \RN$ be bounded with a sufficiently smooth boundary $\partial \Omega$. Furthermore, let $\HmzO$ be equipped with the equivalent seminorm $\abs{\cdot}_\HmO$ defined in \eqref{def_seminorm_Hm} and let $\spr{\cdot,\cdot}_\HmO$ denote its corresponding inner product given in \eqref{def_seminorm_Hm_inner}. Then for each $u \in \LtO$ the element $E_{m,0}^* u$ is the unique solution $z \in \HmzO$ of the variational problem associated to the generalized Dirichlet BVP
    \begin{equation}\label{PDE_seminorm}
    \begin{split}
         (-1)^{m} \sum\limits_{\abs{\alpha} = m}  D^{2\alpha} z(x) &= u(x) \,,
         \qquad
         \forall \, x \in \Omega \,,
         \\
         D^\alpha z(x) &= 0 \,,
        \qquad
        \forall \, x \in \partial \Omega \,, 0 \leq \abs{\alpha} \leq m-1 \,.
    \end{split}
    \end{equation}
Furthermore, given the boundary regularity $\partial \Omega \in C^{2m}$ there holds $E_{m,0}^*u = z \in H^{2m}(\Omega)$.
\end{proposition}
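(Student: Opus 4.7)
The plan is to adapt the argument of Proposition~\ref{prop_PDE_HmO_Da_full}, now applying Proposition~\ref{prop_Agmon} with the specific choice $c_\alpha = 1$ for $\abs{\alpha}=m$ and $c_\alpha = 0$ for $0 \leq \abs{\alpha} < m$. This yields, for all $v, z \in C^{2m}(\overline{\Omega})$, the identity
\[
\sum_{\abs{\alpha}=m} \spr{D^\alpha z, D^\alpha v}_\LtO = \spr{Bz, v}_\LtO + \sum_{j=0}^{m-1}\int_{\partial\Omega} \overline{\frac{\partial^j v}{\partial\nv^j}}\, N_{2m-1-j}z\, dS,
\]
with $Bz := (-1)^m \sum_{\abs{\alpha}=m} D^{2\alpha} z$. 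The left-hand side is exactly the inner product $\spr{z,v}_\HmO$ corresponding to the seminorm $\abs{\cdot}_\HmO$, so the strategy is to show that the variational formulation of the BVP~\eqref{PDE_seminorm} over $\HmzO$ coincides with problem~\eqref{var_prob_hom}, for which $z = E_{m,0}^*u$ is already known to be the unique solution.

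Next I would exploit the fact that for test functions $v \in \CizO$ all traces $\partial^j v / \partial \nv^j$ on $\partial\Omega$ vanish, so the boundary sum drops out and one obtains
\[
\sum_{\abs{\alpha}=m} \spr{D^\alpha z, D^\alpha v}_\LtO = \spr{Bz, v}_\LtO, \qquad \forall\, v \in \CizO.
\]
Since $\CizO$ is dense in $\HmzO$ by definition, continuity of both sides extends the identity to all $v \in \HmzO$; combined with $a(z,v) = l_u(v)$, this produces the distributional equation $Bz = u$ on $\Omega$. The Dirichlet-type conditions $D^\alpha z = 0$ on $\partial\Omega$ for $\abs{\alpha} \leq m-1$ are encoded directly in the membership $z \in \HmzO$ via the trace theorem. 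Existence and uniqueness of $z$ as a solution of~\eqref{var_prob_hom} then follow from Lax--Milgram applied to $a$ on $\HmzO\times\HmzO$, whose coercivity is precisely the second part of Proposition~\ref{prop_norm_equivalence_Hm} (a bounded $\Omega$ has finite width). Comparison with Definition~\ref{def_Es_adj} identifies $z$ with $E_{m,0}^*u$.

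For the regularity assertion I would appeal to classical elliptic regularity theory. The operator $B = (-1)^m \sum_{\abs{\alpha}=m} D^{2\alpha}$ has principal symbol proportional to $\sum_{\abs{\alpha}=m} \xi^{2\alpha}$, which is strictly positive for $\xi \neq 0$, so $B$ is uniformly elliptic of order $2m$. Together with the homogeneous Dirichlet boundary operators, which form a normal system satisfying the complementing (Lopatinskii--Shapiro) condition for $B$, the $C^{2m}$ regularity of $\partial\Omega$, and the data $u \in \LtO$, standard results from e.g.~\cite{Agmon_1965,Evans_1998,Gilbarg_Trudinger_1998,Necas_2011} upgrade the weak solution to $z \in H^{2m}(\Omega)$.

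The variational part of the argument is a routine adaptation of Proposition~\ref{prop_PDE_HmO_Da_full}, so the main obstacle is the regularity step: the operator $B$ is now purely of top order, lacking the zero-order perturbation that was present earlier and kept the associated bilinear form trivially coercive on $\HmO$. Consequently one has to verify separately that $B$ together with the Dirichlet boundary system satisfies the hypotheses of the polyharmonic-type elliptic regularity theorem, in particular uniform ellipticity (immediate from positivity of the symbol) and the complementing condition for the boundary operators. Both are standard for operators of this shape, but they are precisely what upgrades the weak variational solution to the full $H^{2m}$-regularity claim.
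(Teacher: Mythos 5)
Your proposal is correct and follows essentially the same route as the paper: the paper's proof simply states that the argument is analogous to Proposition~\ref{prop_PDE_HmO_Da_full} with the boundary integrals vanishing because the test space is $\HmzO$, and cites \cite[Theorem~9.8]{Agmon_1965} for the $H^{2m}$-regularity. You have merely filled in the details the paper leaves implicit (the choice $c_\alpha=0$ for $\abs{\alpha}<m$ in Proposition~\ref{prop_Agmon}, density of $\CizO$, coercivity via the finite-width norm equivalence, and the ellipticity/complementing-condition hypotheses behind the cited regularity theorem), all of which are consistent with the paper's intent.
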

\begin{proof}
The proof is analogous to the proof of Proposition~\ref{prop_PDE_HmO_Da_full}, with the difference that now the boundary integrals vanish due to the choice of $\HmzO$ for the test space. The regularity $z \in H^{2m}(\Omega)$ follows from \cite[Theorem~9.8]{Agmon_1965}.
\end{proof}

Due to the BVP characterizations \eqref{PDE_norm_full}, \eqref{PDE_norm_simple}, and \eqref{PDE_seminorm}, it follows that the adjoint embedding operator $E_m^*$ can be efficiently computed numerically via the different available methods for solving elliptic BVPs; see e.g.\ \cite{Jung_Langer_2012,Gilbarg_Trudinger_1998,Necas_2011,McLean_2000,Steinbach_2007}. Furthermore, the study of the properties of solutions of these BVPs directly translates to properties of the adjoint embedding operator $E_m^*$. As we have seen, these include in particular regularity estimates in dependence on the domain $\Omega$, which indicate that for $u \in \LtO$ the element $E_m^* u$ typically belongs to a Sobolev space with a higher order than $m$.

\section{Characterizations via Fourier Transforms}\label{sect_fourier_transform}

In this section, we consider the Bessel potential spaces $\HsRN$ for arbitrary $0\leq s \in \R$ and present some representations of the adjoint embedding operator $E_s^*$ in terms of the Fourier transform. We start with the following result generalized from \cite[Lemma~3.1]{Ramlau_Teschke_2004_1}.

\begin{proposition}\label{prop_Bessel_v1}
Let $0 \leq s \in \R$, let $\HsRN$ be equipped with the norm $\norm{\cdot}_\HsRN$ defined in \eqref{def_HsO_Bessel_norm_v1} and let $\spr{\cdot,\cdot}_\HsRN$ denote its corresponding inner product given in \eqref{def_HsO_Bessel_inner_v1}. Then for each $u \in \LtRN$ the element $E_s^* u$ is given by
    \begin{equation}\label{eq_Es_Fourier_v1}
         (E_s^*u)(x) = \FI \kl{ \kl{1+4\pi^2\abs{\cdot}^2}^{-s} \F u }(x) \,.
    \end{equation}
\end{proposition}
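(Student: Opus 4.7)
The plan is to verify the defining identity of the adjoint from Definition~\ref{def_Es_adj} directly, namely
\begin{equation*}
    \spr{E_s^* u, v}_\HsRN = \spr{u, v}_\LtRN, \qquad \forall\, u \in \LtRN,\, v \in \HsRN,
\end{equation*}
with $E_s^* u$ replaced by the candidate $z := \FI\bigl((1+4\pi^2\abs{\cdot}^2)^{-s}\F u\bigr)$. Since the $\HsRN$ inner product \eqref{def_HsO_Bessel_inner_v1} is phrased entirely on the Fourier side, the whole argument will take place in frequency space, so the computation reduces to elementary manipulations of the Bessel multiplier $(1+4\pi^2\abs{\xi}^2)^{-s}$ together with Plancherel's identity.

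First, I would check that $z$ is a well-defined element of $\HsRN$. For $u \in \LtRN$ Plancherel gives $\F u \in \LtRN$, and since $(1+4\pi^2\abs{\xi}^2)^{-s} \leq 1$ the product $(1+4\pi^2\abs{\cdot}^2)^{-s}\F u$ is in $\LtRN$, so $z \in \LtRN$. Moreover, applying the Bessel weight $(1+4\pi^2\abs{\xi}^2)^{s/2}$ yields $(1+4\pi^2\abs{\xi}^2)^{-s/2}(\F u)(\xi)$, which is bounded in modulus by $\abs{(\F u)(\xi)}$ and hence lies in $\LtRN$. By the definition \eqref{def_HsO_Bessel} this is exactly the membership criterion $z \in \HsRN$. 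Along the way I also read off the key identity
\begin{equation*}
    \F z(\xi) = (1+4\pi^2\abs{\xi}^2)^{-s}\, (\F u)(\xi),
\end{equation*}
which follows from $\F \FI = I$ on $\LtRN$.

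Next, I would plug $z$ into the Fourier representation \eqref{def_HsO_Bessel_inner_v1} of the $\HsRN$ inner product against an arbitrary $v \in \HsRN$:
\begin{equation*}
    \spr{z,v}_\HsRN = \int_{\RN} (1+4\pi^2\abs{\xi}^2)^s\, (\F z)(\xi)\, \overline{(\F v)(\xi)}\, d\xi.
\end{equation*}
Substituting the formula for $\F z$ collapses the two Bessel factors $(1+4\pi^2\abs{\xi}^2)^s (1+4\pi^2\abs{\xi}^2)^{-s} = 1$, leaving
\begin{equation*}
    \spr{z,v}_\HsRN = \int_{\RN} (\F u)(\xi)\, \overline{(\F v)(\xi)}\, d\xi = \spr{\F u, \F v}_\LtRN = \spr{u, v}_\LtRN,
\end{equation*}
where the last equality is Plancherel. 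Comparing with Definition~\ref{def_Es_adj} and invoking uniqueness of the adjoint gives $E_s^* u = z$, which is \eqref{eq_Es_Fourier_v1}.

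I do not expect any genuine obstacle: the only points needing care are (i) justifying that $\F z$ equals the pointwise product $(1+4\pi^2\abs{\cdot}^2)^{-s}\F u$ rather than something more subtle, which is immediate because $(1+4\pi^2\abs{\cdot}^2)^{-s}\F u$ is already in $\LtRN$ so $\F \FI$ acts as the identity, and (ii) keeping the Bessel exponents consistent with the specific factor $4\pi^2$ used in \eqref{def_HsO_Bessel_inner_v1}, which would just be a matter of bookkeeping.
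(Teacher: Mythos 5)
Your proposal is correct and follows essentially the same route as the paper: a direct verification of the adjoint identity from Definition~\ref{def_Es_adj} by inserting the Bessel multiplier into the Fourier-side inner product \eqref{def_HsO_Bessel_inner_v1} so that the weights cancel and Plancherel finishes the argument. The only difference is cosmetic — you start from the candidate $z$ and verify, while the paper starts from $\spr{u,v}_\LtRN$ and inserts the factor; your explicit check that $z \in \HsRN$ is a small addition the paper leaves implicit.
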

\begin{proof}
Let $0\leq s \in \R$ and let $u \in \LtRN$ and $v \in \HsRN$ be arbitrary but fixed. Then due to the definition \eqref{def_HsO_Bessel_inner_v1} of the inner product $\spr{\cdot,\cdot}_\HsRN$ there holds
    \begin{equation*}
    \begin{split}
        \spr{u,v}_\LtRN 
        &=
        \int_\RN (\F u)(\xi) \overline{(\F v)(\xi)} \, d\xi
        \\
        & = 
        \int_\RN \kl{1+4\pi^2\abs{\xi}^2}^s \kl{ \kl{1+4\pi^2\abs{\xi}^2}^{-s} (\F u)(\xi)}  \overline{(\F v)(\xi)} \, d\xi \,,
    \end{split}
    \end{equation*}
and thus again due to \eqref{def_HsO_Bessel_inner_v1} that
    \begin{equation*}
        \spr{u,v}_\LtRN 
        = 
        \spr{ \FI\kl{ \kl{1+4\pi^2\abs{\cdot}^2}^{-s} \F u}, v }_\HsRN \,.
    \end{equation*}
Comparing this with Definition~\ref{def_Es_adj} we find that
    \begin{equation*}
         (E_s^*u)(x) = \FI \kl{ \kl{1+4\pi^2\abs{\cdot}^2}^{-s} \F u }(x) \,, 
    \end{equation*}
which yields the assertion.
\end{proof}

When using the equivalent norm $\newnorm{\cdot}_\HsRN$ on $\HsRN$ we obtain the following

\begin{proposition}
Let $1 \leq s \in \R$, let $\HsRN$ be equipped with the norm $\newnorm{\cdot}_\HsRN$ defined in \eqref{def_HsO_Bessel_norm_v2} and let $\spr{\cdot,\cdot}_\HsRN$ denote its corresponding inner product given in \eqref{def_HsO_Bessel_inner_v2}. Then for each $u \in \LtRN$ the element $E_s^* u$ is given by
    \begin{equation*}
         (E_s^*u)(x) = \FI \kl{ \kl{1+\kl{2\pi\abs{\cdot}}^{2s}}^{-1} \F u }(x) \,. 
    \end{equation*}
\end{proposition}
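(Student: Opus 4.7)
The proof is a direct analogue of the proof of Proposition~4.1: the only thing that changes is which weighting factor plays the role of the multiplier. My plan is to follow the same three-step recipe, namely (i) insert an identity $1 = (1+(2\pi|\xi|)^{2s})^{-1}(1+(2\pi|\xi|)^{2s})$ into the Fourier-side expression for $\spr{u,v}_\LtRN$, (ii) recognize the resulting integrand as the defining inner product \eqref{def_HsO_Bessel_inner_v2} with first argument $\FI\!\big((1+(2\pi|\cdot|)^{2s})^{-1}\F u\big)$, and (iii) invoke Definition~\ref{def_Es_adj} to conclude.

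Concretely, for fixed $u\in\LtRN$ and $v\in\HsRN$ I would begin from Plancherel's identity
\begin{equation*}
\spr{u,v}_\LtRN = \int_\RN (\F u)(\xi)\,\overline{(\F v)(\xi)}\,d\xi,
\end{equation*}
multiply and divide the integrand by $(1+(2\pi|\xi|)^{2s})$, and then interpret the result, using \eqref{def_HsO_Bessel_inner_v2}, as
\begin{equation*}
\spr{u,v}_\LtRN = \spr{\FI\!\big((1+(2\pi|\cdot|)^{2s})^{-1}\F u\big),\, v}_\HsRN.
\end{equation*}
Comparing with Definition~\ref{def_Es_adj} then yields the claimed formula.

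The only non-routine step is checking that the candidate element $z := \FI\!\big((1+(2\pi|\cdot|)^{2s})^{-1}\F u\big)$ actually lies in $\HsRN$, so that the inner product on the right is well-defined. Since $\HsRN$ was originally defined via the Bessel potential norm \eqref{def_HsO_Bessel_norm_v1}, membership requires $(1+4\pi^2|\cdot|^2)^{s/2}\F z\in\LtRN$. I would handle this by appealing to the norm equivalence established in the proposition preceding this one: indeed, the inequalities \eqref{ineq_Bessel_mult} imply that $(1+(2\pi|\xi|)^{2s})^{-1}\leq C(1+4\pi^2|\xi|^2)^{-s}$ for some $C>0$, which together with $\F u\in\LtRN$ gives $(1+4\pi^2|\cdot|^2)^{s/2}\F z\in\LtRN$ via the elementary bound
\begin{equation*}
\big|(1+4\pi^2|\xi|^2)^{s/2}(1+(2\pi|\xi|)^{2s})^{-1}(\F u)(\xi)\big|\leq C^{1/2}|(\F u)(\xi)|.
\end{equation*}

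This is really the only substantive verification; once $z\in\HsRN$ is secured, the chain of equalities above is immediate and uniqueness of the adjoint (guaranteed by the Riesz representation theorem implicit in Definition~\ref{def_Es_adj}) finishes the argument. I do not expect any genuine obstacle beyond this well-definedness check, since the assumption $s\geq 1$ is needed only to invoke the equivalence of norms from the preceding proposition.
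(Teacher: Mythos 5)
Your proposal is correct and follows essentially the same route as the paper, which simply notes that the proof is completely analogous to that of Proposition~\ref{prop_Bessel_v1}: insert the weight $\kl{1+\kl{2\pi\abs{\xi}}^{2s}}^{-1}\kl{1+\kl{2\pi\abs{\xi}}^{2s}}$ into the Plancherel expression for $\spr{u,v}_\LtRN$, read off the inner product \eqref{def_HsO_Bessel_inner_v2}, and compare with Definition~\ref{def_Es_adj}. Your additional well-definedness check via \eqref{ineq_Bessel_mult} is a reasonable (if implicit in the paper) supplement and does not change the argument.
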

\begin{proof}
The proof is completely analogous to the proof of Proposition~\ref{prop_Bessel_v1}
\end{proof}

In this context, we mention the Bessel potential operator of order $s$, defined by \cite{Aronszajn_Smith_1961}:

\begin{definition}
For all $s \in \R$ the Bessel potential operator $(I-\laplace)^{-s/2}$ is defined by
    \begin{equation}\label{def_Bessel_potential}
        (I-\laplace)^{-s/2}u(x) := \FI\kl{\kl{1+4\pi^2\abs{\cdot}^2}^{-s/2} \F u}(x) \,.
    \end{equation}
\end{definition}
It follows from the definition \eqref{def_HsO_Bessel} of the Sobolev spaces $\HsRN$ that there holds
    \begin{equation}\label{Bessel_potential_HsRN}
        u \in \HsRN 
        \qquad
        \Longleftrightarrow
        \qquad
        (I-\laplace)^{s/2} u \in \LtRN \,,
    \end{equation}
which explains their alternative name of Bessel potential spaces. Furthermore, we have the following expression of the adjoint embedding $E_s^*$ in terms of the Bessel potential:

\begin{proposition}
Let $0 \leq s \in \R$, let $\HsRN$ be equipped with the norm $\norm{\cdot}_\HsRN$ defined in \eqref{def_HsO_Bessel_norm_v1} and let $\spr{\cdot,\cdot}_\HsRN$ denote its corresponding inner product given in \eqref{def_HsO_Bessel_inner_v1}. Then for each $u \in \LtRN$ the element $E_s^* u$ is given by
    \begin{equation}\label{eq_Es_Bessel}
        E_s^* u = (I-\laplace)^{-s} u \,,
    \end{equation}
where $(I-\laplace)^{-s}$ denotes the Bessel potential operator defined in \eqref{def_Bessel_potential}.
\end{proposition}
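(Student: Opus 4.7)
The plan is to observe that this proposition is essentially a restatement of Proposition~\ref{prop_Bessel_v1} in the compact notation of Bessel potential operators, so the argument reduces to matching the two Fourier multipliers.

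First, I would invoke Proposition~\ref{prop_Bessel_v1}, which gives, for any $u \in \LtRN$,
\begin{equation*}
    (E_s^* u)(x) = \FI\kl{\kl{1+4\pi^2\abs{\cdot}^2}^{-s} \F u}(x) \,.
\end{equation*}
Then I would apply the definition \eqref{def_Bessel_potential} of the Bessel potential operator with parameter $2s$ in place of $s$, i.e., substituting $s \mapsto 2s$ in \eqref{def_Bessel_potential}, which produces precisely the multiplier $\kl{1+4\pi^2\abs{\cdot}^2}^{-s}$. Comparing the two right-hand sides establishes $E_s^* u = (I-\laplace)^{-s}u$ pointwise almost everywhere on $\RN$.

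The only subtlety worth noting is that one should verify the Bessel potential $(I-\laplace)^{-s}$ is well-defined on $\LtRN$ and takes values in $\HsRN$; this follows immediately from the characterization \eqref{Bessel_potential_HsRN}, since $(I-\laplace)^{s/2}\bigl[(I-\laplace)^{-s}u\bigr]$ corresponds on the Fourier side to the multiplier $\kl{1+4\pi^2\abs{\cdot}^2}^{-s/2}$ applied to $\F u \in \LtRN$, which is again in $\LtRN$ because this multiplier is bounded by $1$ on $\RN$.

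There is no real obstacle here; the entire content of the statement is notational, expressing the Fourier-multiplier formula from Proposition~\ref{prop_Bessel_v1} in terms of the Bessel potential operator.
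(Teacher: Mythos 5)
Your proposal is correct and matches the paper's own proof, which likewise just compares the Fourier-multiplier formula \eqref{eq_Es_Fourier_v1} from Proposition~\ref{prop_Bessel_v1} with the definition \eqref{def_Bessel_potential} evaluated at order $2s$. The extra remark on well-definedness via \eqref{Bessel_potential_HsRN} is a harmless (and accurate) addition that the paper omits.
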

\begin{proof}
This follows by comparing the characterization \eqref{eq_Es_Fourier_v1} of the adjoint embedding operator $E_s^*$ with the definition \eqref{def_Bessel_potential} of the Bessel potential operator $(I-\laplace)^{-s}$.
\end{proof}

The representation \eqref{eq_Es_Bessel} of the adjoint embedding operator $E_s^*$ should be compared to its BVP characterization given in Proposition~\ref{prop_PDE_HmO_Da_full}. The representation \eqref{eq_Es_Bessel} is also useful when we consider characterizations via spatial filters in the next section.

\section{Characterizations via Spatial Filters}\label{sect_filters}

In the last sections we considered characterizations of $E_s^* u$ as the solution of certain boundary value problems with right hand side $u$, or via a dampening of the Fourier coefficients of $u$ by suitable factors. These characterizations indicate that the application of $E_s^*$ results in a smoothing or ``smearing out'' effect. We now make this observation more precise by characterizing $E_s^*$ in terms of spatial filters with suitable kernels. These results are closely related to Bessel potentials, which are considered in detail e.g.\ in \cite{Aronszajn_Smith_1961}.

\begin{proposition}
Let $0 < s \in \R$, let $\HsRN$ be equipped with the norm $\norm{\cdot}_\HsRN$ defined in \eqref{def_HsO_Bessel_norm_v1} and let $\spr{\cdot,\cdot}_\HsRN$ denote its corresponding inner product given in \eqref{def_HsO_Bessel_inner_v1}. Then for each $u \in \LtRN$ the element $E_s^* u$ is given by the convolution
    \begin{equation}\label{eq_Es_Gs}
        (E_s^*u)(x) = (G_{2s} \ast u)(x) \,,
    \end{equation}
where the convolution kernel $G_s : \RN \to \R$ is defined by
    \begin{equation}\label{def_Gs}
        G_s(x) := \FI \kl{ \kl{1+4\pi^2 \abs{\cdot}^2}^{-s/2} }(x) \,. 
    \end{equation}
\end{proposition}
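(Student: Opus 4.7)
The proof is a direct application of the convolution theorem, starting from the Fourier-side characterization of $E_s^*$ already established. The plan is as follows.

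First, I would recall that the previous proposition gives $E_s^* u = \FI\!\left((1+4\pi^2|\cdot|^2)^{-s}\,\F u\right)$. Next, I would observe that the multiplier $(1+4\pi^2|\xi|^2)^{-s}$ is precisely $\F G_{2s}$: indeed, by the defining formula \eqref{def_Gs}, $G_{2s}(x) = \FI\!\left((1+4\pi^2|\cdot|^2)^{-s}\right)(x)$, so applying $\F$ and using $\F\FI = I$ yields $\F G_{2s}(\xi) = (1+4\pi^2|\xi|^2)^{-s}$. Substituting into the Fourier representation of $E_s^*u$ gives
\begin{equation*}
E_s^* u = \FI\!\left( (\F G_{2s})\,(\F u) \right),
\end{equation*}
at which point the convolution property \eqref{Fourier_convolution} rewrites the right-hand side as $G_{2s} \ast u$, which is the claim.

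The only genuine subtlety — and what I expect to be the main obstacle — is ensuring that this chain of manipulations is rigorously justified on the appropriate function spaces. The convolution identity \eqref{Fourier_convolution} as stated involves Lebesgue integrals, so to invoke it cleanly one wants $G_{2s} \in L^1(\RN)$ and $u \in L^2(\RN)$, which then yields $G_{2s} \ast u \in L^2(\RN)$ with $\F(G_{2s} \ast u) = (\F G_{2s})(\F u)$ holding pointwise a.e. The $L^1$-integrability of the Bessel kernel $G_{2s}$ for $s > 0$ is a classical fact going back to Aronszajn and Smith \cite{Aronszajn_Smith_1961}; I would briefly cite this and note that $G_{2s}$ is a positive, radially symmetric function with $\int_\RN G_{2s}(x)\,dx = \F G_{2s}(0) = 1$.

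Finally, once the convolution theorem applies, both sides agree as $L^2$-functions, and applying $\FI$ to both sides of the identity $\F(G_{2s} \ast u) = (1+4\pi^2|\cdot|^2)^{-s}\,\F u$ completes the identification $(E_s^* u)(x) = (G_{2s} \ast u)(x)$ for a.e.\ $x \in \RN$. The assumption $s > 0$ (rather than $s \geq 0$) is exactly what is needed for $G_{2s}$ to be integrable; at $s = 0$ one formally has $G_0 = \delta_0$, which is consistent with $E_0 = I$ but falls outside the convolution framework used here.
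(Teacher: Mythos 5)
Your proposal is correct and follows essentially the same route as the paper: start from the Fourier-multiplier representation of $E_s^*$ from the preceding proposition, identify the multiplier $\kl{1+4\pi^2\abs{\cdot}^2}^{-s}$ with $\F G_{2s}$ via \eqref{def_Gs}, and apply the convolution property \eqref{Fourier_convolution}. The extra care you take in justifying the convolution theorem via $G_{2s}\in L^1(\RN)$ (citing Aronszajn--Smith) is a welcome addition that the paper only addresses in the discussion following the proposition, but it does not change the argument.
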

\begin{proof}
Let $0 \leq s \in \R$ and $u \in \LtRN$ be arbitrary but fixed. Due to \eqref{eq_Es_Fourier_v1} there holds
    \begin{equation*}
         (E_s^*u)(x) = \FI \kl{ \kl{1+4\pi^2\abs{\cdot}^2}^{-s} \F u }(x) \,.
    \end{equation*}
Hence, using the convolution property \eqref{Fourier_convolution} we obtain
     \begin{equation*}
         (E_s^*u)(x) = \kl{ \FI \kl{ \kl{1+4\pi^2\abs{\cdot}^2}^{-s} } \ast u}(x) \,, 
    \end{equation*} 
and thus together with the definition \eqref{def_Gs} of $G_s$ we obtain
    \begin{equation*}
        (E_s^*u)(x) = (G_{2s} \ast u)(x) \,,
    \end{equation*}
which yields the assertion.
\end{proof}

The function $G_s$ can be computed explicitly, as the following result from \cite{Aronszajn_Smith_1961} shows:
\begin{proposition}
For all $0 < s \in \R$ the function $G_s$ defined in \eqref{def_Gs} satisfies
    \begin{equation}\label{def_Gs_Bessel}
        G_s(x) 
        =
        \frac{1}{2^{\frac{N+s-2}{2}} \pi^{\frac{N}{2}} \Gamma\kl{\frac{s}{2}}   }
        K_{\frac{N-s}{2}}(\abs{x}) \abs{x}^{\frac{s-N}{2}} \,,
    \end{equation}
where $\Gamma$ denotes the Gamma function and $K_{\frac{N-s}{2}}$ denotes the modified Bessel function of the third kind.
\end{proposition}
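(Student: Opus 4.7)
The plan is to derive the claimed formula directly by combining an integral representation of the symbol $(1+4\pi^2\abs{\xi}^2)^{-s/2}$ with the integral representation of the modified Bessel function $K_\nu$. This is the classical route used in Aronszajn–Smith, and the payoff is that everything reduces to recognizing one integral.

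First, I would use the Gamma function identity
\begin{equation*}
    (1+4\pi^2\abs{\xi}^2)^{-s/2}
    =
    \frac{1}{\Gamma(s/2)} \int_0^\infty t^{s/2-1} e^{-t(1+4\pi^2\abs{\xi}^2)} \, dt \,,
\end{equation*}
valid for $s>0$ and all $\xi \in \RN$. Apply $\FI$ to both sides and formally interchange the $t$-integral with $\FI$ (justified by Fubini, since the integrand is positive and integrable over $\R^+ \times \RN$). The inner inverse Fourier transform is the standard Gaussian identity
\begin{equation*}
    \FI \kl{ e^{-4\pi^2 t \abs{\cdot}^2} }(x) = (4\pi t)^{-N/2} e^{-\abs{x}^2/(4t)} \,,
    \qquad
    t>0 \,,
\end{equation*}
which yields
\begin{equation*}
    G_s(x) = \frac{(4\pi)^{-N/2}}{\Gamma(s/2)} \int_0^\infty t^{(s-N)/2 - 1} \, e^{-t - \abs{x}^2/(4t)} \, dt \,.
\end{equation*}

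Second, I would match this against the integral representation of the modified Bessel function of the third kind,
\begin{equation*}
    K_\nu(z) = \frac{1}{2} \kl{\frac{z}{2}}^\nu \int_0^\infty t^{-\nu - 1} e^{-t - z^2/(4t)} \, dt \,,
    \qquad
    z>0 \,,
\end{equation*}
choosing $\nu = (N-s)/2$ and $z = \abs{x}$. This identifies the integral in the expression for $G_s(x)$ with $2 \, (\abs{x}/2)^{-(N-s)/2} K_{(N-s)/2}(\abs{x}) = 2^{(N-s)/2+1} \abs{x}^{(s-N)/2} K_{(N-s)/2}(\abs{x})$.

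Finally I would simplify the constant: substituting back and using $(4\pi)^{-N/2} = 2^{-N}\pi^{-N/2}$ gives the prefactor
\begin{equation*}
    \frac{2^{-N} \cdot 2^{(N-s)/2 + 1}}{\pi^{N/2} \Gamma(s/2)}
    =
    \frac{1}{2^{(N+s-2)/2} \pi^{N/2} \Gamma(s/2)} \,,
\end{equation*}
so that \eqref{def_Gs_Bessel} follows. The only mildly delicate step is the justification of the Fubini interchange, which is painless because the integrand is nonnegative, and at $x=0$ the formula is understood in the limiting sense using the asymptotics of $K_\nu$. Recognizing the second integral as the Bessel representation is the conceptual core; everything else is bookkeeping of constants.
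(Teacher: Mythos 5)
Your computation is correct and reproduces the classical subordination argument of Aronszajn--Smith: the paper itself offers no proof of this proposition (it is stated as a quoted result with a citation), so your derivation is a genuine addition rather than a retracing. The Gamma-function representation of the symbol, the Gaussian inverse Fourier transform $(4\pi t)^{-N/2}e^{-\abs{x}^2/(4t)}$ (which is consistent with the paper's normalization $\F u(\xi)=\int e^{-2\pi i\xi\cdot x}u(x)\,dx$), the identification with the integral representation $K_\nu(z)=\tfrac12(z/2)^\nu\int_0^\infty t^{-\nu-1}e^{-t-z^2/(4t)}\,dt$ at $\nu=(N-s)/2$, and the bookkeeping of constants ($(4\pi)^{-N/2}\cdot 2^{(N-s)/2+1}=2^{-(N+s-2)/2}\pi^{-N/2}$) all check out.

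The one step that does not hold as stated is your justification of the interchange. You claim Fubini applies "since the integrand is positive and integrable over $\R^+\times\RN$''; but the integrand of $\FI$ applied to the $t$-integral is complex, and its absolute value integrates (by Tonelli) to $\int_\RN\kl{1+4\pi^2\abs{\xi}^2}^{-s/2}d\xi$, which is finite only for $s>N$. For $0<s\leq N$ --- which includes cases the paper actually uses, e.g.\ $G_2$ in dimension $N\geq 2$ --- the double integral is not absolutely convergent, and indeed for $s\leq N/2$ the symbol is not even in $L_1$ or $L_2$, so $\FI$ must be read distributionally. The gap is fixable in several standard ways: (i) prove the identity for $s>N$ where Fubini is legitimate, and extend to all $s>0$ by analytic continuation in $s$ of both sides; (ii) for $x\neq 0$ pair against a Schwartz test function, where the extra integrable factor restores absolute convergence; or (iii) reverse the direction, as the paper implicitly does, by defining $G_s$ through the Bessel formula, noting it is integrable (the singularity $\abs{x}^{s-N}$ at the origin is locally integrable and the decay is exponential), and computing its forward Fourier transform --- there the Fubini step is over $\R^+\times\RN$ in $x$ and is absolutely convergent for every $s>0$. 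With any of these repairs the proof is complete.
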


It was shown in \cite{Aronszajn_Smith_1961} that for all $s > 0$, the function $G_s$ given in \eqref{def_Gs_Bessel} is everywhere positive and decreasing in $\abs{x}$. Furthermore, $G_s$ is an analytic function of $\abs{x}$ except at $x = 0$. In addition, $G_s$ is integrable and thus its Fourier transform exists, thereby justifying \eqref{def_Gs} for all $s >0$. However, note that $G_s \in \LtRN$ if and only if $s > N/2$. In particular, note that \eqref{def_Gs} implies that $G_0$ formally equals the delta distribution, and thus \eqref{eq_Es_Gs} simplifies to $E_0^*u = u$ as expected. Moreover, $G_s$ is a Green's function of the Bessel potential operator. This can also be seen from \eqref{eq_Es_Bessel} and \eqref{eq_Es_Gs}, which yield
    \begin{equation*}
        (I-\laplace)^{-s} u = E_s^* u = G_{2s} \ast u \,.
    \end{equation*}
The asymptotic behaviour of the function $G_s$ is summarized from \cite{Aronszajn_Smith_1961} in the following

\begin{proposition}
Let the function $G_s$ be defined as in \eqref{def_Gs_Bessel}. 
    \begin{equation*}
    \text{Then for} \, \, x \to 0 \, :
    \qquad
    \begin{cases}
        G_s(x) \sim 
        \frac{\Gamma\kl{\frac{N-s}{2}}}{2^s \pi^{\frac{N}{2}} \Gamma\kl{\frac{s}{2}}} \abs{x}^{s-N}
        \,, 
        & s < N \,,
        \\
        G_s(x) \sim \frac{1}{2^{N-1} \pi^{\frac{N}{2}} \Gamma\kl{\frac{N}{2}}} \log{\frac{1}{\abs{x}}} 
        \,,
        & s = N \,,
        \\
        G_s(x) \sim \frac{\Gamma\kl{\frac{s-N}{2}}}{2^N \pi^{\frac{N}{2}} \Gamma\kl{\frac{s}{2}}} 
        \,, 
        & s > N \,, 
    \end{cases}    
    \end{equation*}
Furthermore, for all $s > 0$ and for $\abs{x} \to \infty$ there holds
    \begin{equation*}
        G_s(x) \sim \frac{1}{2^{\frac{N+s-1}{2}} \pi^{\frac{N-1}{2}} \Gamma(\frac{s}{2})} \abs{x}^{\frac{s-N-1}{2}} e^{-\abs{x}} \,.
    \end{equation*}
\end{proposition}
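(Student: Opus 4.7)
The proposed strategy is to substitute the known asymptotic expansions of the modified Bessel function of the third kind $K_\nu$ into the explicit formula \eqref{def_Gs_Bessel} for $G_s$ and then carefully collect the powers of $2$, $\pi$ and $|x|$. Concretely, I will use the following standard facts about $K_\nu$, which can be found in any reference on special functions (e.g.\ Abramowitz--Stegun or Watson): for $\nu > 0$,
\begin{equation*}
K_\nu(t) \sim \tfrac{1}{2}\Gamma(\nu)\bigl(t/2\bigr)^{-\nu} \quad \text{as } t \to 0^+ ,
\qquad
K_0(t) \sim \log(1/t) \quad \text{as } t \to 0^+ ,
\end{equation*}
together with the symmetry $K_{-\nu} = K_\nu$, and the large-argument asymptotic
\begin{equation*}
K_\nu(t) \sim \sqrt{\tfrac{\pi}{2t}}\, e^{-t} \quad \text{as } t \to \infty ,
\end{equation*}
which is valid for every $\nu \in \R$.

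First, I would handle $x \to 0$ by splitting into the three cases according to the sign of the index $\nu := (N-s)/2$ in \eqref{def_Gs_Bessel}. For $s<N$ the index is strictly positive, so the first asymptotic applies directly, and after collecting the factor $(|x|/2)^{-(N-s)/2}$ with the prefactor $|x|^{(s-N)/2}$ and the constant $2^{-(N+s-2)/2}$, the powers of $2$ combine to give $2^{-s}$ and the powers of $|x|$ to give $|x|^{s-N}$, yielding the stated formula. For $s=N$ the index is zero and one plugs in the logarithmic asymptotic of $K_0$, noting that $|x|^{(s-N)/2}=1$; the constant $2^{-(N+s-2)/2}=2^{-(N-1)}$ reproduces the stated prefactor. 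For $s>N$ one uses $K_{(N-s)/2} = K_{(s-N)/2}$ and applies the first asymptotic with the positive index $(s-N)/2$, after which the powers of $|x|$ cancel exactly and one is left with the stated constant.

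Second, for $|x| \to \infty$ there is only one case: insert the large-argument asymptotic of $K_{(N-s)/2}(|x|)$ into \eqref{def_Gs_Bessel} and combine the factors $\sqrt{\pi/(2|x|)}$, $|x|^{(s-N)/2}$, and the constant prefactor. The $\pi$-exponent becomes $N/2 - 1/2 = (N-1)/2$, the $2$-exponent becomes $(N+s-2)/2 + 1/2 = (N+s-1)/2$, and the $|x|$-exponent becomes $(s-N)/2 - 1/2 = (s-N-1)/2$, matching the claim. The symmetry $K_{-\nu}=K_\nu$ makes the argument uniform in $s$.

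There is essentially no conceptual difficulty here; the entire proof is a bookkeeping exercise once one has the explicit representation \eqref{def_Gs_Bessel} and the standard asymptotics of $K_\nu$. The only point that requires mild care is the case distinction at $x\to 0$: the three different asymptotic forms of $K_\nu$ at the origin (polynomial blow-up for $\nu>0$, logarithmic blow-up for $\nu=0$, and boundedness for $\nu<0$ via the symmetry $K_{-\nu}=K_\nu$) map precisely onto the three regimes $s<N$, $s=N$, $s>N$ in the statement. Since the result is classical and appears explicitly in \cite{Aronszajn_Smith_1961}, I would simply refer to that paper for full details and include only the short derivation sketched above.
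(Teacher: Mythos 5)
Your proposal is correct: the paper itself gives no proof of this proposition, merely attributing it to Aronszajn--Smith, and your derivation via the standard small- and large-argument asymptotics of $K_\nu$ (together with the symmetry $K_{-\nu}=K_\nu$) supplies exactly the bookkeeping that the paper leaves to the reference. I checked the exponent arithmetic in all four cases and it matches the stated constants and powers of $\abs{x}$.
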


For integer values of $s$, simpler forms of $G_s$ not involving Bessel functions can often be found, e.g.\ with the help of computer algebra tools. Here, we provide the following

\begin{example}
For $N=1$ and $G_s$ defined as in \eqref{def_Gs_Bessel} it follows that
    \begin{equation*}
        G_2(x) = \frac{1}{2} e^{-\abs{x}} \,,
        \qquad
        \text{and}
        \qquad
        G_4(x) = \frac{1}{4} e^{-\abs{x}}(\abs{x} + 1) \,.
    \end{equation*}
Hence, if $E_1 : H^1(\R) \to L_2(\R)$ denotes the embedding operator, then there holds
    \begin{equation*}
        (E_1^* u)(x) = (G_2 \ast u)(x)
        =
        \frac{1}{2} \int_\R e^{-\abs{x-y}} u(y) \, dy \,,
        \qquad
        \forall \, u \in L_2(\R) \,,
    \end{equation*}
and analogously for $E_2 : H^2(\R) \to L_2(\R)$ it follows that
    \begin{equation*}
        (E_2^* u)(x) = (G_4 \ast u)(x)
        =
        \frac{1}{4} \int_\R e^{-\abs{x-y}} (\abs{x-y} + 1) u(y) \, dy \,,
        \qquad
        \forall \, u \in L_2(\R) \,.
    \end{equation*}
Thus, we see that the application of $E_s^*$ basically acts as a smoothing of the function.
\end{example}

\section{Characterizations via Wavelet Transforms}\label{sect_wavelets}

In this section, we consider some representations of the adjoint embedding operator $E_s^*$ in terms of wavelet transforms. For a detailed introduction and overview of the theory of wavelets we refer to the standard works \cite{Daubechies_1992,Meyer_1993}. Here, we only briefly review some relevant definitions and results, starting with the following definition adapted from \cite{Meyer_1993}. 

\begin{definition}
Let $\psi \in \LiR$ and let $\psi_{j,k}(x) := 2^{j/2} \psi(2^j x - k)$ for all $j,k\in\Z$. Then $\psi$ is called a (basic) wavelet of class $m \in \N_0$ if and only if the following conditions hold: 
    \begin{enumerate}
        \item For all $0 \leq \abs{\alpha} \leq m$ there holds $D^\alpha \psi \in \LiR$.
        \item For all $0 \leq \abs{\alpha} \leq m$ the derivatives $D^\alpha \psi$ decrease rapidly as $x \to \pm \infty$.
        \item For all $0 \leq n \leq m$ there holds $\int_{-\infty}^\infty x^n \psi(x) \, dx = 0$.  
        \item The set $\Kl{\psi_{j,k} }_{j,k\in\Z}$ forms an orthonormal basis of $\LtR$.
    \end{enumerate}
The function $\psi$ is also called the mother wavelet, and the $\psi_{j,k}$ are called wavelets.
\end{definition}

The above definition including regularity and orthonormality conditions is specific to wavelets of class $m$. In general, wavelets are defined as families of functions $\psi_{j,k}$ which are shifted and scaled versions of a mother wavelet $\psi$ and enjoy some type of basis property, but not necessarily orthonormality. This definition can be generalized to higher dimensions in different ways. E.g., for $N=2$ one could use the tensor products
    \begin{equation*}
        \psi_{j_1,j_2,k_1,k_2}(x,y) := \psi_{j_1,k_1}(x)\psi_{j_2,k_2}(y)
        =
        2^{(j_1+j_2)/2} \psi(2^{j_1} x - k_1)  \psi(2^{j_2} x - k_2)\,,
    \end{equation*}
and consider the wavelet family $\Kl{\psi_{j_1,j_2,k_1,k_2}}_{j_1,j_2,k_1,k_2 \in \Z}$, and similarly for all $N \geq 2$. Here, we use the more common generalization introduced below, where instead of multiple dilation factors $j_n$ only a single factor $j$, but multiple basic wavelets $\psi^\eps$ are used.

\begin{definition}
For $N \in \N$ and for all $j \in \Z$ let the sets $\Gamma_j$, $\Lambda_j$, and $\Lambda$ be defined by
    \begin{equation}\label{def_Gamma_Lambda}
        \Gamma_j := 2^{-j} \ZN \,,
        \qquad
        \Lambda_j := \Gamma_{j+1}\setminus \Gamma_j \,,
        \qquad
        \text{and}
        \qquad
        \Lambda := \kl{ \bigcup\limits_{j \in \Z} \Gamma_j  }\setminus \Kl{(0,\dots,0)} \,.
    \end{equation}
Furthermore, let the set $\M$ consisting of $(2^N-1)$ elements $\eps$ be defined by
    \begin{equation*}
        \M := \Kl{ \eps = \kl{\eps_1 \,, \dots \,, \eps_N} \in \Kl{0,1}^N \, \vert \, \eps \neq \kl{0,\dots,0}} \,, 
    \end{equation*}
and for each $\eps \in \M$ let $\psi^\eps$ be a basic wavelet. Then for each $\lambda \in \Lambda$ we define
    \begin{equation}\label{def_psi_lambda}
        \psi_\lambda (x) := 2^{Nj/2}\psi^\eps(2^j x- k) \,,
    \end{equation}
where  $j \in \Z$, $k \in \Z^N$, and $\eps \in \M$ are uniquely defined via $\lambda = 2^{-j} k + 2^{-j-1} \eps \in \Lambda$.
\end{definition}

The theory of wavelets is closely related to the concept of a multiresolution analysis: 

\begin{definition}
A family $\Kl{V_j}_{j \in \Z}$ of closed linear subspaces $V_j$ of $\LtRN$ is called a multiresolution approximation or multiresolution analysis of $\LtRN$, if and only if
    \begin{enumerate}
        \item For all $j \in \Z$ there holds $V_j \subset V_{j+1}$, as well as
            \begin{equation*}
                \bigcap_{j \in \Z} V_j = \Kl{0} \,,
                \qquad
                \text{and}
                \qquad
                \overline{ \bigcup_{j\in\Z} V_j } = \LtRN \,.
            \end{equation*}
        \item For all $u \in \LtRN$, all $j \in \Z$, and all $k \in \ZN$ there holds
            \begin{equation*}
                u(x) \in V_j 
                \quad
                \Longleftrightarrow 
                \quad
                u(2 x ) \in V_{j+1} \,,
                \qquad
                \text{and}
                \qquad
                u(x) \in V_0 
                \quad
                \Longleftrightarrow
                \quad
                u(x-k) \in V_0 \,.        
        \end{equation*}
        \item There exists $\phi \in V_0$ such that $\Kl{\phi(x-k)}_{k\in\ZN}$ is an orthonormal basis of $V_0$.
    \end{enumerate}
Furthermore, the multiresolution analysis $\Kl{V_j}_{j\in\Z}$ is called $r$-regular, if for each $m \in \N$ there exists a constant $C_m > 0$ such that for all $0 \leq \abs{\alpha} \leq r$ there holds
    \begin{equation*}
        \abs{D^\alpha \phi(x)} \leq C_m \kl{1+\abs{x}}^{-m} \,,
        \qquad
        \forall \, x \in \RN \,.
    \end{equation*}
\end{definition}

The function $\phi$ is also called scaling function or father wavelet. Note that instead of orthonormality, one could also assume $\Kl{\phi(x-k)}_{k\in\ZN}$ to be a Riesz basis. Since wavelet families are often linked to a multiresolution analysis, we now make the following

\begin{definition}
Let $\Kl{\psi_\lambda}_{\lambda \in \Lambda}$ be a wavelet family as defined in \eqref{def_psi_lambda} and let $\Kl{V_j}_{j \in \Z}$ be an $r$-regular multiresolution analysis for $\LtRN$ with scaling function $\phi$. Furthermore, for each $j \in \Z$ let $W_j$ denote the orthogonal complement of $V_j$ in $V_{j+1}$, such that $V_{j+1} = V_j \oplus W_j$. Moreover, for all $\lambda \in \Gamma_j$ let $\phi_\lambda(x) := 2^{Nj}\phi(2^j x- k)$ and assume that
    \begin{enumerate}
        \item The set $\Kl{\psi_\lambda}_{\lambda \in \Lambda_j}$ forms an orthonormal basis of $W_j$.
        \item The set $\Kl{\phi_\lambda}_{\lambda \in \Gamma_j}$ forms an orthonormal basis of $V_j$.
    \end{enumerate}
Then we say that $\Kl{\psi_\lambda}_{\lambda \in \Lambda}$ corresponds to the $r$-regular multiresolution analysis $\Kl{V_j}_{j\in\Z}$.
\end{definition}

It can be seen from the definition of the subspaces $W_j$ that there holds 
    \begin{equation*}
        \LtRN = \bigoplus\limits_{j\in\Z} W_j = V_0 \bigoplus\limits_{j=0}^\infty W_j \,.
    \end{equation*}
Hence, it follows that both the wavelet families $\Kl{\psi_\lambda}_{\lambda \in \Lambda}$ and $\Kl{\phi_\lambda}_{\lambda \in \Gamma_0} \cup \Kl{\psi_\lambda}_{j \geq 0 \,, \lambda \in \Gamma_j }$ form an orthonormal basis of $\LtRN$. Hence, for each function $u \in \LtRN$ there holds
    \begin{equation}\label{eq_rec_wavelet}
        u = \sum\limits_{\lambda \in \Lambda}
        \spr{u,\psi_\lambda}_\LtRN \psi_\lambda  
        = \sum\limits_{\lambda \in \Gamma_0} \spr{u,\phi_\lambda}_\LtRN \phi_\lambda + \sum\limits_{j\geq 0} \sum\limits_{\lambda \in \Lambda_j} \spr{u,\psi_\lambda}_\LtRN \psi_\lambda \,. 
    \end{equation}
These expansions can also be used to define equivalent norms on $\HsRN$, as the following theorem adapted from \cite{Meyer_1993, Daubechies_1992} shows.

\begin{theorem}
Let $0 \leq s \in \R$, let $\HsRN$ be equipped with the norm $\norm{\cdot}_\HsRN$ defined in \eqref{def_HsO_Bessel_norm_v1} and let $\spr{\cdot,\cdot}_\HsRN$ denote its corresponding inner product given in \eqref{def_HsO_Bessel_inner_v1}. Furthermore, let the orthonormal wavelet family $\Kl{\psi_\lambda}_{\lambda \in \Lambda}$ as defined in \eqref{def_psi_lambda} correspond to an $r$-regular multiresolution analysis of $\LtRN$ with $s < r$ and scaling function $\phi$. Then an equivalent norm for $\HsRN$ is given by
    \begin{align}
        \norm{u}_\HsRN 
        :=& \kl{\sum\limits_{j < 0}\sum\limits_{\lambda \in \Lambda_j} \abs{\spr{u,\psi_\lambda}_\LtRN}^2
        +
        \sum\limits_{j\geq 0} \sum\limits_{\lambda \in \Lambda_j} 2^{2js} \abs{\spr{u,\psi_\lambda}_\LtRN}^2 }^{1/2} 
        \label{def_HsO_wavelet_norm_v1} \\
        =&
        \kl{ \sum\limits_{\lambda \in \Gamma_0} \abs{\spr{u,\phi_\lambda}_\LtRN}^2 
        +
        \sum\limits_{j\geq 0} \sum\limits_{\lambda \in \Lambda_j} 2^{2js} \abs{\spr{u,\psi_\lambda}_\LtRN}^2}^{1/2} \,.
        \label{def_HsO_wavelet_norm_v2}
    \end{align}
\end{theorem}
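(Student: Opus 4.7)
The plan is to split the proof into two distinct parts: first establishing equality of the two expressions \eqref{def_HsO_wavelet_norm_v1} and \eqref{def_HsO_wavelet_norm_v2}, and second establishing equivalence with the Bessel-potential norm $\norm{\cdot}_\HsRN$ from \eqref{def_HsO_Bessel_norm_v1}.

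For the first part, I would use the orthogonal decomposition $L_2(\RN) = V_0 \oplus \bigoplus_{j \geq 0} W_j$ together with the fact that, because $\bigcap_{j \in \Z} V_j = \Kl{0}$ and $V_{j+1} = V_j \oplus W_j$, one has (in the sense of a closed orthogonal sum) $V_0 = \bigoplus_{j<0} W_j$. Since $\Kl{\phi_\lambda}_{\lambda \in \Gamma_0}$ is an orthonormal basis of $V_0$ and $\Kl{\psi_\lambda}_{\lambda \in \Lambda_j}$ is an orthonormal basis of $W_j$ for each $j<0$, Parseval's identity applied to the orthogonal projection $P_{V_0}u$ gives
\begin{equation*}
\sum_{\lambda \in \Gamma_0} \abs{\spr{u,\phi_\lambda}_\LtRN}^2 = \norm{P_{V_0} u}_\LtRN^2 = \sum_{j<0}\sum_{\lambda \in \Lambda_j} \abs{\spr{u,\psi_\lambda}_\LtRN}^2,
\end{equation*}
which proves that \eqref{def_HsO_wavelet_norm_v1} and \eqref{def_HsO_wavelet_norm_v2} coincide exactly (no constants involved).

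For the second part, the strategy is the standard Littlewood--Paley argument. I would first show that the wavelet family almost-diagonalizes the Bessel potential operator $(I-\laplace)^{s/2}$ in the sense that there exist constants $C_1,C_2 > 0$, depending on $s$ and $r$ but not on $j$, with
\begin{equation*}
C_1\, 2^{2js} \sum_{\lambda \in \Lambda_j} \abs{c_\lambda}^2 \;\leq\; \Big\|\sum_{\lambda \in \Lambda_j} c_\lambda \psi_\lambda\Big\|_\HsRN^2 \;\leq\; C_2\, 2^{2js} \sum_{\lambda \in \Lambda_j} \abs{c_\lambda}^2
\end{equation*}
for every $j \geq 0$ and finitely supported sequence $(c_\lambda)$, and the analogous estimate without the $2^{2js}$ factor for $j<0$ (where, for the low-frequency block, I would lump $V_0$ together via the first part above). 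These block estimates follow from the fact that the Fourier transforms of $\psi^\eps$ and $D^\alpha \psi^\eps$ decay rapidly and vanish at $\xi=0$ up to order $r$ (by the vanishing-moment and regularity conditions), so that functions in $W_j$ have Fourier transforms essentially concentrated in the dyadic annulus $\abs{\xi} \sim 2^j$, making $(1+4\pi^2\abs{\xi}^2)^{s} \sim 2^{2js}$ on their frequency support.

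Once the block estimates are in place, I would sum over $j$. The upper bound (wavelet norm $\lesssim H^s$ norm) is obtained by expanding $u$ in the wavelet basis, applying $(I-\laplace)^{s/2}$, and using the almost-orthogonality of the scales $W_j$ in $\HsRN$ together with Cauchy--Schwarz or a Schur-type test for the off-diagonal coupling; the lower bound is obtained analogously by dualizing. The main obstacle is precisely this almost-orthogonality of the scales in the Bessel potential inner product: in $L_2$ the $W_j$ are exactly orthogonal, but under $\spr{\cdot,\cdot}_\HsRN$ cross terms between different scales $j \neq j'$ must be controlled. Making this rigorous requires exploiting $r$-regularity and the condition $s<r$ to obtain decay estimates of the form $\abs{\spr{\psi_\lambda,\psi_{\lambda'}}_\HsRN} \leq C\, 2^{-|j-j'|\sigma}$ for some $\sigma>0$ (for appropriately normalized wavelets), which is the content of Meyer's theorem on wavelet characterizations of function spaces. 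Since this machinery is developed in full generality in \cite{Meyer_1993, Daubechies_1992}, I would invoke it at this point rather than reprove it, and conclude the equivalence of norms.
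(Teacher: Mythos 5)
Your proposal is correct and follows essentially the same route as the paper: the equality of \eqref{def_HsO_wavelet_norm_v1} and \eqref{def_HsO_wavelet_norm_v2} is reduced to the identity $\sum_{j<0}\sum_{\lambda\in\Lambda_j}\abs{\spr{u,\psi_\lambda}_\LtRN}^2=\sum_{\lambda\in\Gamma_0}\abs{\spr{u,\phi_\lambda}_\LtRN}^2$ coming from the orthogonal decomposition $V_0=\overline{\bigoplus_{j<0}W_j}$, and the norm equivalence itself is deferred to Meyer's wavelet characterization theorem in \cite{Meyer_1993}. Your additional Littlewood--Paley sketch of why that theorem holds is a sound outline but, as in the paper, the heavy lifting is ultimately delegated to the cited reference.
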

\begin{proof}
As stated above, a proof of this theorem can e.g.\ be found in \cite{Meyer_1993}. Note that the equivalence of the expressions \eqref{def_HsO_wavelet_norm_v1} and \eqref{def_HsO_wavelet_norm_v2} follows from the fact that 
    \begin{equation*}
        \sum\limits_{j < 0}\sum\limits_{\lambda \in \Lambda_j} \abs{\spr{u,\psi_\lambda}}_\LtRN^2 
        =
        \sum\limits_{\lambda \in \Gamma_0} \abs{\spr{u,\phi_\lambda}_\LtRN}^2 \,,
    \end{equation*}
which is a consequence of the definition of the subspaces $V_j$ and $W_j$.
\end{proof}

Note that the norm $\norm{\cdot}_\HsRN$ defined in \eqref{def_HsO_wavelet_norm_v1} is induced by the inner product
    \begin{equation}\label{def_HsO_wavelet_inner_v1}
    \begin{split}
        \spr{u,v}_\HsRN &:= \sum\limits_{j < 0}\sum\limits_{\lambda \in \Lambda_j} \spr{u,\psi_\lambda}_\LtRN \overline{\spr{v,\psi_\lambda}}_\LtRN
        \\
        & \qquad +
        \sum\limits_{j\geq 0} \sum\limits_{\lambda \in \Lambda_j} 2^{2js} \spr{u,\psi_\lambda}_\LtRN
        \overline{\spr{v,\psi_\lambda}}_\LtRN \,,
    \end{split}
    \end{equation}
and that its equivalent version \eqref{def_HsO_wavelet_norm_v2} is induced by the inner product
    \begin{equation}\label{def_HsO_wavelet_inner_v2}
    \begin{split}
        \spr{u,v}_\HsRN &:= 
        \sum\limits_{\lambda \in \Gamma_0} \spr{u,\phi_\lambda}_\LtRN
        \overline{\spr{v,\phi_\lambda}}_\LtRN
        \\
        & \qquad + 
        \sum\limits_{j\geq 0} \sum\limits_{\lambda \in \Lambda_j} 2^{2js} \spr{u,\psi_\lambda}_\LtRN \overline{\spr{v,\psi_\lambda}}_\LtRN \,.
    \end{split}
    \end{equation}
When using the equivalent norm \eqref{def_HsO_wavelet_norm_v1} on $\HsRN$, we obtain the following representation of $E_s^*$, generalizing the one-dimensional case $N=1$ previously considered in \cite{Ramlau_Teschke_2004_1,Ramlau_2008}.

\begin{proposition}\label{prop_wavelet_v1}
Let $0 \leq s \in \R$, let $\HsRN$ be equipped with the norm $\norm{\cdot}_\HsRN$ defined in \eqref{def_HsO_wavelet_norm_v1} and let $\spr{\cdot,\cdot}_\HsRN$ denote its corresponding inner product given in \eqref{def_HsO_wavelet_inner_v1}. Furthermore, let the orthonormal wavelet family $\Kl{\psi_\lambda}_{\lambda \in \Lambda}$ defined in \eqref{def_psi_lambda} correspond to an $r$-regular multiresolution analysis of $\LtRN$ with $s < r$ and with a scaling function $\phi$. Then for each $u \in \LtRN$ the element $E_s^* u$ is given by
    \begin{equation}
        E_s^* u = \sum\limits_{j < 0}\sum\limits_{\lambda \in \Lambda_j} \spr{u,\psi_\lambda}_\LtRN \psi_\lambda
        +
        \sum\limits_{j\geq 0} \sum\limits_{\lambda \in \Lambda_j} 2^{-2js} \spr{u,\psi_\lambda}_\LtRN \psi_\lambda \,.
    \end{equation}
Moreover, if instead of \eqref{def_HsO_wavelet_norm_v1} the expression \eqref{def_HsO_wavelet_norm_v2} is used for the equivalent norm $\norm{\cdot}_\HsO$ on $\HsRN$ and if $\spr{\cdot,\cdot}_\HsRN$ denotes is corresponding inner product given in \eqref{def_HsO_wavelet_inner_v2}, then
    \begin{equation}
        E_s^* u = \sum\limits_{\lambda \in \Gamma_0 } \spr{u,\phi_\lambda}_\LtRN \phi_\lambda
        +
        \sum\limits_{j\geq 0} \sum\limits_{\lambda \in \Lambda_j} 2^{-2js} \spr{u,\psi_\lambda}_\LtRN \psi_\lambda \,.
    \end{equation}
\end{proposition}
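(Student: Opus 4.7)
The plan is to mimic the proof of Proposition~\ref{prop_Bessel_v1}, replacing the Fourier decomposition by the wavelet decomposition \eqref{eq_rec_wavelet}. Let me denote the candidate
\begin{equation*}
    z := \sum_{j<0}\sum_{\lambda\in\Lambda_j}\spr{u,\psi_\lambda}_\LtRN \psi_\lambda + \sum_{j\geq 0}\sum_{\lambda\in\Lambda_j} 2^{-2js}\spr{u,\psi_\lambda}_\LtRN \psi_\lambda\,.
\end{equation*}
The proof has three steps: first, show $z\in\HsRN$; second, compute $\spr{z,v}_\HsRN$ for arbitrary $v\in\HsRN$ using the inner product \eqref{def_HsO_wavelet_inner_v1}; third, invoke Definition~\ref{def_Es_adj}.

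For the first step, I would compute $\norm{z}_\HsRN^2$ directly via \eqref{def_HsO_wavelet_norm_v1}. Since the wavelet coefficients of $z$ with respect to $\psi_\lambda$ are $\spr{u,\psi_\lambda}_\LtRN$ for $j<0$ and $2^{-2js}\spr{u,\psi_\lambda}_\LtRN$ for $j\geq 0$, inserting these into \eqref{def_HsO_wavelet_norm_v1} gives
\begin{equation*}
    \norm{z}_\HsRN^2 = \sum_{j<0}\sum_{\lambda\in\Lambda_j} \abs{\spr{u,\psi_\lambda}_\LtRN}^2 + \sum_{j\geq 0}\sum_{\lambda\in\Lambda_j} 2^{-2js} \abs{\spr{u,\psi_\lambda}_\LtRN}^2\,.
\end{equation*}
Since $s\geq 0$ implies $2^{-2js}\leq 1$ for $j\geq 0$, and since $\Kl{\psi_\lambda}_{\lambda\in\Lambda}$ is an orthonormal basis of $\LtRN$, this is bounded by $\norm{u}_\LtRN^2 < \infty$, so $z\in\HsRN$.

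For the second step, I would expand any $v\in\HsRN\subset\LtRN$ in the wavelet basis and apply Parseval to obtain
\begin{equation*}
    \spr{u,v}_\LtRN = \sum_{j\in\Z}\sum_{\lambda\in\Lambda_j}\spr{u,\psi_\lambda}_\LtRN \overline{\spr{v,\psi_\lambda}_\LtRN}\,.
\end{equation*}
Splitting the outer sum at $j=0$ and multiplying and dividing by $2^{2js}$ in the $j\geq 0$ portion yields exactly the expression $\spr{z,v}_\HsRN$ via \eqref{def_HsO_wavelet_inner_v1}. Thus $\spr{z,v}_\HsRN = \spr{u,v}_\LtRN = \spr{u,E_s v}_\LtRN$ for all $v\in\HsRN$, which by Definition~\ref{def_Es_adj} identifies $z$ with $E_s^* u$. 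The second claim follows by the same argument, replacing the sum over $j<0$ by $\sum_{\lambda\in\Gamma_0}\spr{u,\phi_\lambda}_\LtRN\phi_\lambda$ and using the equivalence of \eqref{def_HsO_wavelet_inner_v1} and \eqref{def_HsO_wavelet_inner_v2}, which was justified via the identity noted in the preceding theorem.

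The main subtlety I expect is bookkeeping rather than mathematics: keeping the splitting at $j=0$ consistent between the inner product formula \eqref{def_HsO_wavelet_inner_v1} and the candidate $z$, and interchanging the summation with the $\LtRN$ inner product when evaluating $\spr{z,v}_\HsRN$. The latter is legitimate because the partial sums of $z$ converge to $z$ in $\HsRN$, hence in $\LtRN$, so the inner product limits pass through; no deeper analytic difficulty arises, since the whole argument is really an orthogonality calculation in disguise.
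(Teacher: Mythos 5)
Your proposal is correct and follows essentially the same route as the paper: both arguments reduce to the defining relation of the adjoint from Definition~\ref{def_Es_adj} combined with the wavelet expansion of the inner product \eqref{def_HsO_wavelet_inner_v1}, orthonormality/Parseval, and the reconstruction formula \eqref{eq_rec_wavelet}; the paper merely runs the computation in the opposite direction (deriving the wavelet coefficients of $E_s^*u$ rather than verifying a candidate). Your explicit check that the candidate $z$ lies in $\HsRN$ via $2^{-2js}\leq 1$ is a small but welcome addition that the paper leaves implicit.
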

\begin{proof}
Let $0\leq s \in \R$ and let $u \in \LtRN$ be arbitrary but fixed. Due to Definition~\ref{def_Es_adj}, we know that the adjoint embedding $E_s^*u$ is uniquely characterized by 
    \begin{equation*}
        \spr{E_s^*u,v}_\HsO = \spr{u, v}_\LtO \,,
        \qquad
        \forall v \in \HsO \,.
    \end{equation*}
Due to the definition \eqref{def_HsO_wavelet_inner_v1} of the inner product $\spr{\cdot,\cdot}_\HsRN$ this is equivalent to
    \begin{equation*}
    \begin{split}
        &\sum\limits_{j < 0}\sum\limits_{\lambda \in \Lambda_j} \spr{E_s^* u,\psi_\lambda}_\LtRN \spr{v,\psi_\lambda}_\LtRN
        +
        \sum\limits_{j\geq 0} \sum\limits_{\lambda \in \Lambda_j} 2^{2js}  \spr{E_s^* u,\psi_\lambda}_\LtRN
        \spr{v,\psi_\lambda}_\LtRN
        \\
        & \qquad =
        \sum\limits_{j < 0}\sum\limits_{\lambda \in \Lambda_j} \spr{u,\psi_\lambda}_\LtRN \spr{v,\psi_\lambda}_\LtRN
        +
        \sum\limits_{j\geq 0} \sum\limits_{\lambda \in \Lambda_j}  \spr{u,\psi_\lambda}_\LtRN 
        \spr{v,\psi_\lambda}_\LtRN \,,
    \end{split}    
    \end{equation*}
for all $v \in \HsO$. Due to the orthonormality of the functions $\psi_\lambda$, it thus follows that
    \begin{equation*}
    \begin{split}
        \spr{E_s^* u,\psi_\lambda}_\LtRN
        =
        \begin{cases}
            \spr{u,\psi_\lambda}_\LtRN \,,
            &
            j < 0 \,, \, \lambda \in \Lambda_j \,,
            \\
            2^{-2js} \spr{u,\psi_\lambda}_\LtRN \,,
            &
            j \geq 0 \,, \, \lambda \in \Lambda_j \,.
        \end{cases} 
    \end{split}
    \end{equation*}
Hence, together with the reconstruction formula \eqref{eq_rec_wavelet} we obtain
    \begin{equation*}
        E_s^* u = \sum\limits_{j < 0}\sum\limits_{\lambda \in \Lambda_j} \spr{u,\psi_\lambda}_\LtRN \psi_\lambda
        +
        \sum\limits_{j\geq 0} \sum\limits_{\lambda \in \Lambda_j} 2^{-2js} \spr{u,\psi_\lambda}_\LtRN \psi_\lambda \,,
    \end{equation*}
which yields the first part of the assertion. The second part follows analogously.
\end{proof}

We end this section with an example in the one-dimensional setting; cf.\ also \cite{Ramlau_Teschke_2004_1,Ramlau_2008}.

\begin{example}\label{example_wavelets_N1}
Consider the case $N=1$, let $\psi$ be a mother wavelet and let the wavelets $\psi_{j,k}(x) = 2^j \psi(2^j x - k)$ be such that $\Kl{\psi_{j,k}}_{j,k\in \Z}$ forms an orthonormal basis of $\LtR$. We now want to characterize the adjoint $E_s^*$ of the embedding $E_s : \HsR \to \LtR$ using these wavelets by invoking Proposition~\ref{prop_wavelet_v1}. For this, we first need to write the wavelet family $\Kl{\psi_{j,k}}_{j,k\in\Z}$ in the form $\Kl{\psi_\lambda}_{\lambda \in \Lambda}$ as defined in \eqref{def_psi_lambda}. This can be done via
    \begin{equation*}
        \psi_\lambda(x)  = \psi_{j,k}(x) = 2^{j/2} \psi(2^j x - k) \,.
    \end{equation*}
using the relation $\lambda = 2^{-j} k + 2^{-j-1} \in \Lambda$ with $\Lambda$ as defined in \eqref{def_Gamma_Lambda}. Furthermore, since $N=1$ it follows that $\M = \Kl{1}$, that $\Gamma_j = 2^{-j} \Z$, and thus $\Lambda_j = 2^{-j}(\Z+1/2)$. Hence,
    \begin{equation*}
        \lambda = 2^{-j} ( k + 2^{-1} ) \in \Lambda_j \quad
        \Longleftrightarrow 
        \quad 
        k \in \Z \,,
        \qquad \quad
        \forall \, j \in \Z \,.
    \end{equation*}
Next, assume that $\Kl{\psi_{j,k}}_{j,k\in\Z}$ corresponds to an $r$-regular multiresolution analysis of $\LtR$ with $r > s$. Furthermore, let $\phi$ denote the scaling function of the multiresolution analysis and let $\phi_{0,k}(x) := \phi(x-k)$. Then due to Proposition~\ref{prop_wavelet_v1}, if $\HsR$ is equipped with the norm \eqref{def_HsO_wavelet_norm_v1} and inner product \eqref{def_HsO_wavelet_inner_v1}, then for all $u \in \LtR$ there holds
    \begin{equation*}
        E_s^* u = \sum\limits_{j < 0}\sum\limits_{k\in\Z} \spr{u,\psi_{j,k}}_\LtR \psi_{j,k}
        +
        \sum\limits_{j\geq 0} \sum\limits_{k \in \Z} 2^{-2js} \spr{u,\psi_{j,k}}_\LtR \psi_{j,k} \,.
    \end{equation*}
Similarly, if $\HsR$ is equipped with the norm \eqref{def_HsO_wavelet_norm_v2} and inner product \eqref{def_HsO_wavelet_inner_v2} then
    \begin{equation*} 
        E_s^* u = \sum\limits_{k \in \Z} \spr{u,\phi_{0,k}}_\LtR \phi_{0,k}
        +
        \sum\limits_{j\geq 0} \sum\limits_{k \in \Z} 2^{-2js} \spr{u,\psi_{j,k}}_\LtR \psi_{j,k} \,.
    \end{equation*}    
Both representations can be efficiently implemented using the fast wavelet transform \cite{Daubechies_1992}.
\end{example}

\section{Characterizations via Fourier Series}\label{sect_series}

In this section, we consider representations of the adjoint embedding operator $E_s^*$ in terms of Fourier series, generalizing and extending results from \cite{Ramlau_Teschke_2004_1}. Hereby, we restrict ourselves to $\Omega = (0,1)^N$, but note that the presented results can be generalized to $\Omega$ being an arbitrary open hyper-rectangle.

First of all, since the set $\Kl{ e_k }_{k\in\ZN}$ with $e_k(x) := \exp(2\pi i\, k \cdot x)$ forms an orthonormal basis of $\LtO$, it follows that every $u \in \LtO$ can be expanded in the Fourier series
    \begin{equation}\label{eq_u_series}
        u(x) = \sum\limits_{k \in \ZN} u_k e_k(x) \,,
        \qquad
        \text{where}
        \qquad
        u_k := \spr{u,e_k}_\LtO \,. 
    \end{equation}
The Fourier coefficients $u_k$ can be used to characterize the $\HmO$-norm, as we see in 

\begin{proposition}
Let $m \in \N$ and let $\HmO$ be equipped with the norm $\norm{\cdot}_\HmO$ defined in \eqref{def_HmO_Da_full_norm}. Then for each $u \in \LtO$ there holds
    \begin{equation*}
        \norm{u}_\HmO = \kl{ \sum\limits_{k\in\ZN} \sum\limits_{0\leq \abs{\alpha} \leq m } \kl{2\pi k}^{2\alpha}  \abs{u_k}^2 }^{1/2} \,.
    \end{equation*}
Furthermore, an equivalent norm for $\HmO$ is given by
    \begin{equation}\label{def_HmO_series_norm}
        \norm{u}_\HmO := \kl{ \sum\limits_{k\in\ZN}  (1+4\pi^2 \abs{k}^2)^m  \abs{u_k}^2 }^{1/2} \,.
    \end{equation}
\end{proposition}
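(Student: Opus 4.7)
The plan is to reduce both claims to the fact that differentiation corresponds to multiplication by $(2\pi i k)^\alpha$ in the Fourier basis, combined with Parseval's identity. The first equality will then follow as a direct computation summing over multiindices, and the equivalent norm via an algebraic comparison using the multinomial theorem.

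For the first identity I would first verify the pointwise formula $D^\alpha e_k = (2\pi i k)^\alpha e_k$, using the convention $(2\pi i k)^\alpha := \prod_{j=1}^N (2\pi i k_j)^{\alpha_j}$. By linearity this extends to trigonometric polynomials, and by density (with continuity of weak differentiation) to any $u = \sum_k u_k e_k \in \HmO$, so that the $k$-th Fourier coefficient of $D^\alpha u$ is $(2\pi i k)^\alpha u_k$. Parseval's identity applied to $D^\alpha u \in \LtO$ then gives
\begin{equation*}
\norm{D^\alpha u}_\LtO^2 = \sum_{k \in \ZN} \abs{(2\pi i k)^\alpha}^2 \abs{u_k}^2 = \sum_{k \in \ZN} (2\pi k)^{2\alpha} \abs{u_k}^2 \,,
\end{equation*}
using that $\abs{(2\pi i k_j)^{\alpha_j}}^2 = (2\pi k_j)^{2\alpha_j}$ since each exponent $2\alpha_j$ is even. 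Summing over all multiindices with $0 \leq \abs{\alpha} \leq m$ and interchanging the double sum via Tonelli's theorem (all summands being non-negative) yields precisely the claimed Fourier representation of $\norm{u}_\HmO^2$.

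For the equivalent norm I would establish the pointwise comparison
\begin{equation*}
\sum_{0 \leq \abs{\alpha} \leq m} (2\pi k)^{2\alpha} \,\leq\, (1+4\pi^2\abs{k}^2)^m \,\leq\, m! \sum_{0 \leq \abs{\alpha} \leq m} (2\pi k)^{2\alpha}\,, \qquad \forall\, k \in \ZN,
\end{equation*}
which follows from the multinomial theorem applied to
\begin{equation*}
(1+4\pi^2\abs{k}^2)^m = \Bigl(1+\sum_{j=1}^N (2\pi k_j)^2 \Bigr)^m = \sum_{0 \leq \abs{\alpha} \leq m} \binom{m}{m-\abs{\alpha},\, \alpha_1, \ldots, \alpha_N} (2\pi k)^{2\alpha} \,,
\end{equation*}
upon noting that the multinomial coefficients are positive integers bounded below by $1$ and above by $m!$. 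Multiplying these bounds by $\abs{u_k}^2$ and summing in $k$ transfers the comparison to the two candidate norms, yielding their equivalence.

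The main technical point will be rigorously justifying the Fourier formula for the weak derivative $D^\alpha u$: since the exponentials $e_k$ are not compactly supported in $\Omega$, integration by parts against them formally produces boundary contributions on $\partial(0,1)^N$. Handling this cleanly is the only delicate step (and implicitly requires viewing the Fourier basis $\{e_k\}_{k\in\ZN}$ as corresponding to the periodic identification of $(0,1)^N$ with the $N$-dimensional torus); once settled, the remainder is routine bookkeeping combining Parseval with the multinomial expansion.
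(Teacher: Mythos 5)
Your argument follows the same route as the paper's proof: expand $u$ in the orthonormal system $\Kl{e_k}_{k\in\ZN}$, use $D^\alpha e_k = (2\pi i k)^\alpha e_k$ together with Parseval to obtain $\norm{D^\alpha u}_\LtO^2 = \sum_{k\in\ZN}(2\pi k)^{2\alpha}\abs{u_k}^2$, sum over $\alpha$, and then compare the symbols $\sum_{0\leq\abs{\alpha}\leq m}(2\pi k)^{2\alpha}$ and $(1+4\pi^2\abs{k}^2)^m$ pointwise in $k$. The one place you add something is the norm equivalence: the paper merely cites the existence of the constants $C_1,C_2$ (referring to McLean), whereas your multinomial expansion of $\bigl(1+\sum_{j}(2\pi k_j)^2\bigr)^m$ produces them explicitly (in the paper's normalization, $C_1 = 1/m!$ and $C_2 = 1$), which is a clean, self-contained improvement.

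However, the ``delicate step'' you flag at the end is not a mere technicality, and neither your sketch nor the paper's proof actually resolves it. For the non-periodic space $H^m((0,1)^N)$ the relation $\widehat{D^\alpha u}(k) = (2\pi i k)^\alpha u_k$ is false in general, precisely because of the boundary terms you mention: for $N=1$, $m=1$, $u(x)=x$ one computes $u_k = i/(2\pi k)$ for $k\neq 0$, so $(2\pi i k)u_k = -1$ while $(u')_k = \delta_{k0}$; consequently $\sum_{k}(1+4\pi^2 k^2)\abs{u_k}^2$ diverges even though $\norm{u}_{H^1((0,1))}^2 = 4/3$. For the same reason your density step fails: the $H^1$-closure of the trigonometric polynomials in $H^1((0,1))$ is the periodic subspace, not all of $H^1((0,1))$, so ``continuity of weak differentiation'' cannot transport the identity from trigonometric polynomials to general $u\in\HmO$. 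The identity and both proofs are valid exactly when the Fourier basis is paired with the periodic Sobolev space $\HsTN$ (or with $\tilde H^s(\Omega)$), as you suspect in your closing remark; that remark needs to be promoted from a caveat to a hypothesis for the argument to close.
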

\begin{proof}
Let $m \in \N$ and $u \in \HmO$ be arbitrary but fixed and let $u_k$ denote its Fourier coefficients as defined in \eqref{eq_u_series}. Then for each multiindex $\alpha$ it follows from \eqref{eq_u_series} that
    \begin{equation*}
        D^\alpha u = \sum\limits_{k\in\ZN} u_k D^\alpha e_k 
        = \sum\limits_{k\in\ZN} u_k \kl{2\pi i k}^\alpha e_k \,,
    \end{equation*}
and thus the definition \eqref{def_HmO_Da_full_norm} of the norm $\norm{\cdot}_\HmO$ implies that
    \begin{equation*}
        \norm{u}_\HmO^2 
        = 
        \sum\limits_{0\leq \abs{\alpha} \leq m } \norm{D^\alpha u}^2_\LtO 
        =
        \sum\limits_{0\leq \abs{\alpha} \leq m } \norm{\sum\limits_{k\in\ZN} u_k \kl{2\pi i k}^\alpha e_k }^2_\LtO \,.
    \end{equation*}
Hence, together with the orthonormality of the functions $e_k$ we find that
    \begin{equation}\label{helper_2}
        \norm{u}_\HmO^2 
        = 
        \sum\limits_{0\leq \abs{\alpha} \leq m } \sum\limits_{k\in\ZN}  \kl{2\pi k}^{2\alpha} \abs{u_k}^2 
        \,,
    \end{equation}
which yields the first part of the assertion. Concerning the second part, note that since there exist constants $C_2 > C_1 > 0$ such that for all $k \in \ZN$ there holds (cf.~\cite{McLean_2000})
    \begin{equation*}
        C_1 (1+4\pi^2\abs{k}^2)^m 
        \leq
        \sum\limits_{0 \leq \abs{\alpha} \leq m}  \kl{2\pi k}^{2\alpha}
        \leq
        C_2 (1+ 4\pi^2 \abs{k}^2)^m \,,
    \end{equation*}
it follows together with \eqref{helper_2} that
    \begin{equation*}
        C_1 \sum\limits_{k\in\ZN}  (1+4\pi^2\abs{k}^2)^m  \abs{u_k}^2 
        \leq 
        \norm{u}_\HmO^2
        \leq
        C_2 \sum\limits_{k\in\ZN}  (1+4\pi^2 \abs{k}^2)^m  \abs{u_k}^2 \,, 
    \end{equation*}
which establishes the equivalence of norms and thus concludes the proof.
\end{proof}

The equivalent norm $\norm{\cdot}_\HmO$ defined in \eqref{def_HmO_series_norm} is induced by the inner product
    \begin{equation}\label{def_HmO_series_inner}
        \spr{u,v}_\HmO := \sum\limits_{k \in \ZN} (1+4\pi^2\abs{k}^2)^{m} u_k \overline{v}_k \,,
    \end{equation}
where $u_k, v_k$ denote the Fourier coefficients of $u, v$, respectively. With this, we obtain
    
\begin{proposition}\label{prop_series}
Let $m \in \N$, let $\HmO$ be equipped with the norm $\norm{\cdot}_\HmO$ defined in \eqref{def_HmO_series_norm} and let $\spr{\cdot,\cdot}_\HmO$ denote its corresponding inner product given in \eqref{def_HmO_series_inner}. Then for each $u \in \LtO$ the element $E_m^* u$ is given by
    \begin{equation}
        (E_m^*u)(x) = \sum\limits_{k \in \ZN} (1+ 4\pi^2 \abs{k}^2)^{-m} u_k e_k(x) \,,
    \end{equation}
where $e_k(x) = \exp(2\pi i k \cdot x)$ and $u_k = \spr{u,e_k}_\LtO$ are the Fourier coefficients of $u$.
\end{proposition}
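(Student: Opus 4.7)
The plan is to directly verify the defining identity of the adjoint operator from Definition~\ref{def_Es_adj} by a simple Fourier coefficient computation. Specifically, fix $u \in \LtO$ and define the candidate
\begin{equation*}
    z(x) := \sum_{k \in \ZN} (1+4\pi^2\abs{k}^2)^{-m} u_k e_k(x) \,,
\end{equation*}
where $u_k := \spr{u,e_k}_\LtO$. The strategy is to show first that $z \in \HmO$, second that $\spr{z,v}_\HmO = \spr{u,v}_\LtO$ for all $v \in \HmO$, and finally invoke Definition~\ref{def_Es_adj} together with the uniqueness of the adjoint element to conclude $z = E_m^* u$.

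For the first step, I would read off the Fourier coefficients of $z$ as $z_k = (1+4\pi^2\abs{k}^2)^{-m} u_k$ and plug them into the equivalent norm \eqref{def_HmO_series_norm}, giving
\begin{equation*}
    \norm{z}_\HmO^2
    = \sum_{k\in\ZN} (1+4\pi^2\abs{k}^2)^m \abs{z_k}^2
    = \sum_{k\in\ZN} (1+4\pi^2\abs{k}^2)^{-m} \abs{u_k}^2
    \leq \sum_{k\in\ZN} \abs{u_k}^2
    = \norm{u}_\LtO^2 \,,
\end{equation*}
where I used that $m \in \N$ so the dampening factor is bounded by $1$, together with Parseval's identity. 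Hence $z \in \HmO$, and as a byproduct $E_m^*$ has operator norm at most $1$ in this setting (which is of course already known abstractly).

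For the second step, I would insert the Fourier coefficients of $z$ directly into the inner product formula \eqref{def_HmO_series_inner}: for arbitrary $v \in \HmO$ with Fourier coefficients $v_k := \spr{v,e_k}_\LtO$,
\begin{equation*}
    \spr{z,v}_\HmO
    = \sum_{k \in \ZN} (1+4\pi^2\abs{k}^2)^m z_k \overline{v_k}
    = \sum_{k \in \ZN} (1+4\pi^2\abs{k}^2)^m (1+4\pi^2\abs{k}^2)^{-m} u_k \overline{v_k}
    = \sum_{k \in \ZN} u_k \overline{v_k} \,,
\end{equation*}
and by Parseval's identity the right-hand side equals $\spr{u,v}_\LtO$. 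Comparing with the defining relation $\spr{E_m^* u, v}_\HmO = \spr{u,v}_\LtO$ in Definition~\ref{def_Es_adj} and using uniqueness of the adjoint element yields $E_m^* u = z$. No step in this argument presents a serious obstacle; the only subtlety is the verification that $z$ actually lies in $\HmO$, which is handled at once by the dampening estimate above, and the harmless interchange of the infinite sum with the inner product, which is justified by the $\LtO$-convergence of the partial Fourier sums of $u$ and $v$ together with the continuity of $\spr{\cdot,\cdot}_\HmO$.
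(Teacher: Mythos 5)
Your proposal is correct and follows essentially the same route as the paper's proof: both compute $\spr{u,v}_\LtO = \sum_k (1+4\pi^2\abs{k}^2)^m \bigl((1+4\pi^2\abs{k}^2)^{-m}u_k\bigr)\overline{v_k}$ and match this against the inner product \eqref{def_HmO_series_inner} to read off the Fourier coefficients of $E_m^*u$. The only difference is that you explicitly verify the candidate lies in $\HmO$ via the dampening estimate, a step the paper leaves implicit; this is a small but welcome addition of rigor.
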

\begin{proof}
Let $m \in \N$, let $u \in \LtO$ and let $u_k$ denote the Fourier coefficients of $u$. Then with the definition \eqref{def_HmO_series_inner} of the inner product $\spr{\cdot,\cdot}_\HmO$ it follows that
    \begin{equation*}
        \spr{u,v}_\LtO 
        =
        \sum\limits_{k \in \ZN} u_k \overline{v}_k 
        =
        \sum\limits_{k \in \ZN} (1+ 4\pi^2 \abs{k}^2)^{m} \kl{(1+ 4\pi^2 \abs{k}^2)^{-m} u_k} \overline{v}_k \,,
    \end{equation*}
for all $v \in \HmO$ and with $v_k$ denoting the Fourier coefficients of $v$. Hence, again using the definition \eqref{def_HmO_series_inner} of $\norm{\cdot}_\HmO$ we find that the Fourier coefficients of $E_m^*u$ satisfy
    \begin{equation*}
        (E_m^* u)_k = (1+4\pi^2\abs{k}^2)^{-m} u_k \,,
        \qquad
        \forall \, k \in \ZN \,.
    \end{equation*}
Using the reconstruction formula \eqref{eq_u_series} we thus find that
    \begin{equation*}
        (E_m^* u)(x) = \sum\limits_{k\in\ZN} (1+4\pi^2\abs{k}^2)^{-m} u_k e_k(x) 
    \end{equation*}
which yields the assertion.
\end{proof}

For general $0 \leq s \in \R$, it is possible to generalize the expression \eqref{def_HmO_series_norm} by defining
    \begin{equation}\label{def_HsT_series_norm}
        \norm{u}_\HsO := \sum\limits_{k \in \ZN} \kl{ 1 + 4\pi^2 \abs{k}^2  }^{-s} \abs{u_k}^2 \,.
    \end{equation}
It can be shown (cf.~\cite{Natterer_2001}) that \eqref{def_HsT_series_norm} provides an equivalent norm for the Sobolev space
    \begin{equation*}
        \tilde{H}^s(\Omega) := \Kl{ u \in \HsRN \, \vert \,  \operatorname{supp}(u) \subset \overline{\Omega} } \,.
    \end{equation*}
which is typically equipped with the restriction of the inner product \eqref{def_HsO_Bessel_inner_v1}; see \cite{McLean_2000}. Hence, the results of Proposition~\ref{prop_series} also apply to $ \tilde{E}_s : \tilde{H}^s(\Omega) \to \LtO$, and we obtain
    \begin{equation}
        (\tilde{E}_s^*u)(x) = \sum\limits_{k \in \ZN} (1+ 4\pi^2 \abs{k}^2)^{-s} u_k e_k(x) \,.
    \end{equation}
Similar result also hold for periodic Sobolev spaces, for which we now provide a

\begin{definition}
Let $N \in \N$ and let $\mathbb{T}^N := \R^N / \Z^N$ denote the unit torus in dimension $N$. Then for $0 \leq s \in \R$ the periodic Sobolev space $\HsTN$ is defined by
    \begin{equation*}
        \HsTN := \Kl{ u \, : \, \mathbb{T}^N \to \C \, \big\vert \,  \norm{\cdot}_\HsTN < \infty } \,,
    \end{equation*}
where the norm $\norm{\cdot}_\HsTN$ is defined as in \eqref{def_HsT_series_norm}. Furthermore, let $\LtTN := H^0(\mathbb{T}^N)$.
\end{definition}

As before, the norm $\norm{\cdot}_\HsTN$ defined in \eqref{def_HsT_series_norm} is induced by the inner product
    \begin{equation}\label{def_HsTN_series_inner}
        \spr{u,v}_\HsTN := \sum\limits_{k \in \ZN} (1+4\pi^2 \abs{k}^2)^{-s} u_k \overline{v}_k \,,
    \end{equation}
Hence, we obtain the following characterization of $E_s^*$ already shown for $N=1$ in \cite{Ramlau_Teschke_2004_1}:

\begin{proposition}\label{prop_series_02}
Let $0 \leq s \in \R$, let $\HsTN$ be equipped with the norm $\norm{\cdot}_\HsTN$ defined in \eqref{def_HsT_series_norm} and let $\spr{\cdot,\cdot}_\HsTN$ denote its corresponding inner product given in \eqref{def_HsTN_series_inner}. Then for the embedding $E_s : \HsTN \to \LtTN$ and all $u \in \LtTN$ there holds
    \begin{equation}
        (E_s^*u)(x) = \sum\limits_{k \in \ZN} (1+ 4\pi^2 \abs{k}^2)^{-s} u_k e_k(x) \,,
    \end{equation}
where $e_k(x) = \exp(2\pi i k \cdot x)$, and $u_k = \spr{u,e_k}_\LtO$ are the Fourier coefficients of $u$.
\end{proposition}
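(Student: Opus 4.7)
The plan is to carry out essentially the same argument as in the proof of Proposition~\ref{prop_series}, but in the periodic setting on $\TN$ instead of on $\Omega = (0,1)^N$. The algebraic structure is identical, since in both cases the relevant Hilbert space norm is expressed as a weighted $\ell^2$ norm of Fourier coefficients and $\Kl{e_k}_{k\in\ZN}$ provides an orthonormal basis of the ambient $L_2$ space.

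First, I would recall that $\Kl{e_k}_{k\in\ZN}$ is an orthonormal basis of $\LtTN$, so that every $u \in \LtTN$ admits the Fourier expansion $u = \sum_{k\in\ZN} u_k e_k$ with $u_k = \spr{u,e_k}_\LtTN$, and Parseval's identity yields $\spr{u,v}_\LtTN = \sum_{k\in\ZN} u_k \overline{v}_k$ for all $u,v \in \LtTN$.

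Second, I would invoke Definition~\ref{def_Es_adj} to characterize $z := E_s^* u \in \HsTN$ as the unique element satisfying $\spr{z,v}_\HsTN = \spr{u,v}_\LtTN$ for every $v \in \HsTN$. Substituting the inner product \eqref{def_HsTN_series_inner} on the left-hand side and the Parseval identity on the right-hand side, and then testing against $v = e_k$ (which lies in $\HsTN$ for each $k\in\ZN$), I can read off the Fourier coefficients $z_k$ in terms of $u_k$ and the weight $(1+4\pi^2\abs{k}^2)$. Inserting these coefficients into the reconstruction formula \eqref{eq_u_series} then yields the claimed series representation.

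I do not anticipate any substantial obstacle; the only points requiring attention are the consistent handling of the exponent in \eqref{def_HsTN_series_inner} when passing from the $\LtTN$-inner product to the $\HsTN$-inner product, and verifying that the candidate series indeed belongs to $\HsTN$. The latter follows from Parseval applied to $u \in \LtTN$ together with the boundedness of the relevant weights for $s \geq 0$.
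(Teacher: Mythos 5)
Your proposal is correct and takes essentially the same route as the paper, whose own proof consists of the single remark that the argument is analogous to Proposition~\ref{prop_series}: expand in the orthonormal basis $\Kl{e_k}_{k\in\ZN}$, write both inner products as weighted $\ell^2$ sums of Fourier coefficients, and read off the coefficients of $E_s^*u$. Your caution about ``consistent handling of the exponent'' is warranted: as printed, \eqref{def_HsT_series_norm} and \eqref{def_HsTN_series_inner} carry the weight $(1+4\pi^2\abs{k}^2)^{-s}$, which would yield the reciprocal factor $(1+4\pi^2\abs{k}^2)^{+s}$ in the adjoint, so the exponent there must be read as $+s$ (consistent with \eqref{def_HmO_series_norm}) for the stated formula to follow.
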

\begin{proof}
The proof is analogous to the proof of Proposition~\ref{prop_series}; see also \cite{Ramlau_Teschke_2004_1}.
\end{proof}

For $u \in \LtTN$ consider again the Fourier coefficients $u_k$ defined in \eqref{eq_u_series}, i.e.,
    \begin{equation*}
        u_k = \spr{u,e_k}_\LtO
        =
        \int_{(0,1)^N} u(x) e^{-2\pi i k \cdot x} \, dx \,,
    \end{equation*}
and note that if the integral above is approximated by the trapezoidal rule, then the Fourier coefficients $u_k$ can be efficiently computed using the fast Fourier transform.

\section{Singular Value and Frame Decompositions}\label{sect_SVD_Frames}

In this section, we consider various singular value decompositions (SVDs) and frame decompositions (FDs) of the embedding operator $E_s$, which in turn lead to different representations of the adjoint embedding $E_s^*$. Some of these SVDs directly relate to representations of $E_s^*$ given in previous sections, while others are connected to eigenvalues and eigenfunctions of certain differential operators. We start by recalling

\begin{definition}
Let $A : X \to Y$ be a bounded and compact linear operator between Hilbert spaces $X$ and $Y$. Furthermore, let $\Kl{\sigma_k^2}_{k\in\N}$ be the non-zero eigenvalues of $A^*A$ listed in decreasing order including multiplicity and let $\sigma_k > 0$. Moreover, let $\Kl{v_k}_{k\in\N}$ be a corresponding complete orthonormal system of eigenfunctions and let $\Kl{u_k}_{k\in\N}$ be defined via $u_k := (1/\sigma_k) A v_k $. Then $\kl{\sigma_k,v_k,u_k}_{k\in\N}$ is called a singular system of $A$. 
\end{definition}   

If $A : X \to Y$ has the singular system $\kl{\sigma_k,v_k,u_k}_{k\in\N}$, then one obtains the SVD
    \begin{equation}\label{SVD}
        A v = \sum_{k\in\N} \sigma_k \spr{v,v_k}_X u_k \,, 
        \qquad
        \forall \, v \in X \,,
    \end{equation}
which is a common tool in the analysis and solution of integral equations and linear inverse problems; for details see e.g.~\cite{Engl_Hanke_Neubauer_1996,Engl_1997,Louis_1989}. Here, we are interested in SVDs of the embedding operator $E_s$ and the corresponding representations of $E_s^*$. For this, note first that if $\Omega \subset \RN$ is a bounded domain with a Lipschitz continuous boundary and $m \in \N$, then as noted in Section~\ref{sect_Sobolev_Spaces_Embeddings}, the embedding $E_m : \HmO \to \LtO$ is compact and thus has a singular system $\kl{\sigma_k,v_k,u_k}_{k\in\N}$. Hence, we obtain the SVD
	\begin{equation}\label{SVD_Em}
		E_m v = \sum\limits_{k\in\N} \sigma_k \spr{v,v_k}_\HmO u_k\,,
		\qquad
		\text{and}
		\qquad
		E_m^* u = \sum\limits_{k\in\N} \sigma_k \spr{u,u_k}_\LtO v_k\,,
	\end{equation}
for all $u \in \LtO$ and $v\in \HmO$. Furthermore, for all $v \in D((E_m^*)^{-1/2}) = \HmO$,
	\begin{equation*}
		(E_m^*)^{-1/2} v = \sum\limits_{k\in\N} \sigma_k^{-1} \spr{v,v_k}_\HmO v_k \,.
	\end{equation*}
Note that the singular system $\kl{\sigma_k,v_k,u_k}_{k\in\N}$ implicitly depends on the inner products on $\HmO$ and $\LtO$. Hence, in general every choice of equivalent norm and corresponding inner product on these spaces results in a different singular system for $E_m$, and thus in a different SVD. Furthermore, note that for $\Omega = \RN$ the embedding operator $E_s$ is not compact (see e.g.\ \cite{Adams_Fournier_1977}) and thus the theory of SVDs is not directly applicable. However, there may still exist singular value-type decompositions $\kl{\sigma_k,v_k,u_k}_{k\in\N}$ which essentially satisfy the same properties as the SVD and thus also lead to the same representation of $E_s^*$ as in \eqref{SVD_Em}. These can also be understood within the more general class of frame decompositions, which essentially weaken the orthogonality assumptions on the singular functions $\Kl{u_k}_{k \in \N}$ and $\Kl{v_k}_{k\in\N}$; see e.g.~\cite{Hubmer_Ramlau_2021_01,Hubmer_Ramlau_Weissinger_2022,Ebner_Frikel_Lorenz_Schwab_Haltmeier_2023}. In fact, the representations of the adjoint embedding $E_s^*$ based on Fourier series and wavelets presented in the previous sections imply the following singular value(-type) decompositions of $E_s$:

\begin{proposition}
Let $e_k(x) := \exp(2\pi i\, k \cdot x)$, let $0\leq s \in \R$, and let the orthonormal wavelet family $\Kl{\psi_\lambda}_{\lambda \in \Lambda}$ corresponding to an $r$-regular multiresolution analysis of $\LtRN$ with $s < r$ and with a scaling function $\phi$ be as in Proposition~\ref{prop_wavelet_v1}. Then there holds:
    \begin{enumerate}
        \item If $\HsRN$ is equipped with the inner product \eqref{def_HsO_wavelet_inner_v1} then
            \begin{equation*} 
            \begin{split}
                \Kl{\sigma_k}_{k\in\N} &:= \Kl{ 1 }_{j < 0, \lambda \in \Lambda_j} \cup \Kl{ 2^{-js} }_{j \geq 0, \lambda \in \Lambda_j} \,,
                \qquad
                \Kl{u_k}_{k\in \N} := \Kl{ \psi_\lambda }_{j \in \Z, \lambda \in \Lambda_j} \,,
                \\
                \Kl{v_k}_{k\in\N} &:= \Kl{ \psi_\lambda }_{j < 0, \lambda \in \Lambda_j} \cup \Kl{ 2^{-js} \psi_\lambda}_{j \geq 0, \lambda \in \Lambda_j} 
                \,,
            \end{split}    
            \end{equation*}
        defines a singular value-type decomposition of $E_s : \HsRN \to \LtRN$.
        \item If $\HsRN$ is equipped with the inner product \eqref{def_HsO_wavelet_inner_v2} then
            \begin{equation*}
            \begin{split}
                \Kl{\sigma_k}_{k\in\N} &:= \Kl{ 1 }_{j < 0, \lambda \in \Lambda_j} \cup \Kl{ 2^{-js} }_{j \geq 0, \lambda \in \Lambda_j} \,,
                \qquad
                \Kl{u_k}_{k\in \N} := \Kl{\phi_\lambda}_{\lambda \in \Gamma_0} \cup \Kl{ \psi_\lambda }_{j \geq 0, \lambda \in \Lambda_j} \,,
                \\
                \Kl{v_k}_{k\in\N} &:= \Kl{ \phi_\lambda }_{ \lambda \in \Gamma_0 } \cup \Kl{ 2^{-js} \psi_\lambda}_{j \geq 0, \lambda \in \Lambda_j} 
                \,,
            \end{split}    
            \end{equation*}
        defines a singular value-type decomposition of $E_s : \HsRN \to \LtRN$.
        \item If $\Omega = (0,1)^N$ and $H^m(\Omega)$ is equipped with the inner product \eqref{def_HmO_series_inner} then
            \begin{equation*}
                \sigma_k := (1+4\pi^2\abs{k}^2)^{-m/2} \,,
                \qquad
                v_k := (1+4\pi^2\abs{k}^2)^{-m/2} e_k \,,
                \qquad
                u_k := e_k \,,
            \end{equation*}
        gives an SVD of $E_m : \HmO \to \LtO$ for all $m \in \N$. Similarly, the choice
            \begin{equation*}
                \sigma_k := (1+4\pi^2\abs{k}^2)^{-s/2} \,,
                \qquad
                v_k := (1+4\pi^2\abs{k}^2)^{-s/2} e_k \,,
                \qquad
                u_k := e_k \,,
            \end{equation*}
        yields an SVD for both $E_s : \tilde{H}^s(\Omega) \to \LtO$ and $E_s : \HsTN \to \LtTN$, given that the inner product \eqref{def_HsTN_series_inner} is used on the spaces $\tilde{H}^s(\Omega)$ and $\HsTN$, respectively.
    \end{enumerate}
\end{proposition}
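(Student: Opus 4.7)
The plan is to verify, in each of the three items, the defining relations of a (genuine or generalized) singular system: namely, that $\{u_k\}$ is an orthonormal set in the target $L_2$-space, that $\{v_k\}$ is orthonormal in the source Sobolev space with the specified inner product, that $\{v_k\}$ is complete, and that $E_s v_k = \sigma_k u_k$. Because the embedding acts pointwise by the identity, this last relation will in every case reduce to checking that the prescribed element $v_k$ literally equals $\sigma_k u_k$, which is immediate from the definitions given.

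For the wavelet cases (items 1 and 2), I would first invoke the standing hypothesis that $\{\psi_\lambda\}_{\lambda\in\Lambda}$ (respectively $\{\phi_\lambda\}_{\lambda\in\Gamma_0}\cup\{\psi_\lambda\}_{j\geq 0,\lambda\in\Lambda_j}$) is an orthonormal basis of $L_2(\RN)$, which immediately furnishes orthonormality and completeness of $\{u_k\}$ in $L_2(\RN)$. For orthonormality of $\{v_k\}$ in $\HsRN$, I would substitute the listed scaled wavelets into \eqref{def_HsO_wavelet_inner_v1} (resp.\ \eqref{def_HsO_wavelet_inner_v2}) and observe that for $j\geq 0$ the factor $2^{-2js}$ coming from the scaling of $v_k = 2^{-js}\psi_\lambda$ cancels exactly against the weight $2^{2js}$ in the inner product, leaving $\langle\psi_\lambda,\psi_{\lambda'}\rangle_{\LtRN}=\delta_{\lambda\lambda'}$; for $j<0$ (resp.\ $\lambda\in\Gamma_0$) the weight is $1$ and the same $L_2$-orthonormality applies directly. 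Completeness of $\{v_k\}$ in $\HsRN$ then follows from the wavelet reconstruction formula \eqref{eq_rec_wavelet} combined with the norm equivalence stated immediately above, so the three relations together constitute the claimed singular value-type decomposition (in the sense of the frame-decomposition references cited in the paragraph preceding the proposition).

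For the Fourier series case, the argument is parallel. The family $\{e_k\}_{k\in\ZN}$ is already an orthonormal basis of $\LtO$, so $\{u_k\}=\{e_k\}$ is orthonormal and complete in $L_2$. The Fourier coefficient of $v_k=(1+4\pi^2\abs{k}^2)^{-m/2}e_k$ at index $k'$ is $(1+4\pi^2\abs{k}^2)^{-m/2}\delta_{kk'}$, so the weighted inner product \eqref{def_HmO_series_inner} evaluates to $\langle v_k,v_{k'}\rangle_{\HmO}=\delta_{kk'}$; the same computation with exponent $s$ handles $\tilde H^s(\Omega)$ and $\HsTN$ under \eqref{def_HsTN_series_inner}. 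Completeness of $\{v_k\}$ under the equivalent norm \eqref{def_HmO_series_norm} reduces to density of trigonometric polynomials, already used implicitly in \eqref{eq_u_series}. The main (minor) obstacle is then simply a bookkeeping one: distinguishing the genuine compact SVD obtained in item 3, where $\Omega=(0,1)^N$ is bounded and Lipschitz so that $E_m$ is compact and $\sigma_k\to 0$, from the non-compact situation in items 1 and 2 on $\RN$, where the collection $(\sigma_k,v_k,u_k)$ satisfies all singular-system identities but the $\sigma_k$ do not accumulate at zero and the decomposition must be interpreted in the generalized (frame) sense; in either case the proof of the series representation in \eqref{SVD_Em} follows by combining the three verified properties above with Definition~\ref{def_Es_adj}.
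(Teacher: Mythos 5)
Your proposal is correct, but it proceeds along a different route than the paper. The paper's proof is a one-line deduction: it observes that the first two items ``follow directly'' from the wavelet representation of $E_s^*$ in Proposition~\ref{prop_wavelet_v1}, and the third from the Fourier-series representations in Propositions~\ref{prop_series} and~\ref{prop_series_02}; i.e., one simply matches the already-established formula $E_s^* u = \sum_k \sigma_k \spr{u,u_k}_\LtRN v_k$ (e.g.\ $2^{-2js}\spr{u,\psi_\lambda}_\LtRN\psi_\lambda = \sigma_k\spr{u,u_k}_\LtRN v_k$ with $\sigma_k=2^{-js}$, $v_k=2^{-js}\psi_\lambda$) against the second identity in \eqref{SVD_Em}. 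You instead verify the singular-system axioms from scratch: orthonormality of $\Kl{u_k}$ in $\LtRN$, orthonormality and completeness of $\Kl{v_k}$ in $\HsRN$ via the cancellation of the $2^{-2js}$ scaling against the $2^{2js}$ weight in \eqref{def_HsO_wavelet_inner_v1} (resp.\ the weight $(1+4\pi^2\abs{k}^2)^m$ in \eqref{def_HmO_series_inner}), and the relation $E_s v_k = \sigma_k u_k$, which is trivial since $E_s$ acts as the identity. This is more self-contained and makes explicit the computations that the paper leaves buried in the earlier propositions; what it costs you is the need to also recover $E_s^* u_k = \sigma_k v_k$ (equivalently $E_s^*E_s v_k = \sigma_k^2 v_k$) from the three verified properties --- this does follow from $\spr{E_s^* u_k, v_j}_\HsRN = \spr{u_k, E_s v_j}_\LtRN = \sigma_j\delta_{kj}$ together with completeness of $\Kl{v_j}$, but you should state that half-line explicitly rather than gesture at Definition~\ref{def_Es_adj}. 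Your closing remark correctly separates the genuine compact SVD in item~3 from the non-compact, frame-type interpretation needed in items~1 and~2, which the paper also only addresses in the surrounding discussion rather than in the proof itself.
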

\begin{proof}
The first and second statements follow directly from Proposition~\ref{prop_wavelet_v1}, while the third statement is an immediate consequence of Proposition~\ref{prop_series} and Proposition~\ref{prop_series_02}.
\end{proof}

Next, we consider a class of decompositions of the adjoint embedding $E_s^*$ resulting from its BVP characterization given in Proposition~\ref{prop_PDE_HmO_seminorm}. We have the following general
   
\begin{proposition}\label{prop_Laplace_SVD}
Let $m \in \N$ and let the domain $\Omega \subseteq \RN$ have finite width (cf.~Definition~\ref{def_domains}). Furthermore, let $\HmzO$ be equipped with the equivalent seminorm $\abs{\cdot}_\HmO$ defined in \eqref{def_seminorm_Hm} and let $\spr{\cdot,\cdot}_\HmO$ denote its corresponding inner product given in \eqref{def_seminorm_Hm_inner}. Moreover, assume that
    \begin{equation}\label{def_B}
         B : H^{2m}(\Omega) \cap H_0^{m}(\Omega) \to \LtO \,,
         \qquad
         z \mapsto 
         (-1)^{m} \sum\limits_{\abs{\alpha} = m}  D^{2\alpha} z
    \end{equation}
has a complete orthonormal eigensystem $(\lambda_k, u_k)_{k\in\N}$ and let $E_{m,0} : \HmzO \to \LtO$ denote the embedding operator. Then for each $u\in\LtO$ there holds 
    \begin{equation}\label{eq_Es_Binv}
        E_{m,0}^* u = B^{-1} u
        =
        \sum\limits_{k\in \N} \frac{1}{\lambda_k} \spr{u,u_k}_\LtO u_k \,.
    \end{equation}
\end{proposition}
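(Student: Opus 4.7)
The plan is to exploit two things simultaneously: the variational characterization of $E_{m,0}^* u$ that is effectively already built into Definition~\ref{def_Es_adj} on $\HmzO$ equipped with the seminorm inner product, and the assumed spectral decomposition of $B$. It is cleanest not to rely on Proposition~\ref{prop_PDE_HmO_seminorm} directly (since that statement invokes boundary smoothness not assumed here), but rather to construct the series candidate and verify it is $E_{m,0}^* u$.

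First, I would note that by definition of the adjoint with respect to the seminorm inner product \eqref{def_seminorm_Hm_inner}, the element $z := E_{m,0}^* u$ is the unique element of $\HmzO$ satisfying
\begin{equation*}
    \sum_{\abs{\alpha}=m} \spr{D^\alpha z, D^\alpha v}_\LtO = \spr{u,v}_\LtO, \qquad \forall v \in \HmzO.
\end{equation*}
Uniqueness is a direct Lax--Milgram argument: since $\Omega$ has finite width, Proposition~\ref{prop_norm_equivalence_Hm} tells us that $\abs{\cdot}_\HmO$ is a norm on $\HmzO$ equivalent to $\norm{\cdot}_\HmO$, so the bilinear form is coercive. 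A consequence of this step I would record for later is that every eigenvalue $\lambda_k$ of $B$ is strictly positive, since testing $B u_k = \lambda_k u_k$ against $u_k \in \HmzO$ and integrating by parts (with vanishing boundary terms) yields $\lambda_k = \abs{u_k}_\HmO^2 > 0$.

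Next, I would define $w$ via the proposed series, $w := \sum_{k\in\N} \lambda_k^{-1} \spr{u,u_k}_\LtO u_k$, and check that it solves the above variational problem. Expanding $u = \sum_k \spr{u,u_k}_\LtO u_k$ in the orthonormal basis $\Kl{u_k}$ and using termwise integration by parts against any $v \in \HmzO$,
\begin{equation*}
    \sum_{\abs{\alpha}=m} \spr{D^\alpha u_k, D^\alpha v}_\LtO = \spr{B u_k, v}_\LtO = \lambda_k \spr{u_k, v}_\LtO,
\end{equation*}
so that weighting by $\lambda_k^{-1} \spr{u,u_k}_\LtO$ and summing produces exactly $\spr{u,v}_\LtO$. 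That $w \in \HmzO$ (so the formal manipulation is justified and the sum converges in $\HmzO$) follows from the estimate $\abs{w}_\HmO^2 = \sum_k \lambda_k^{-1} \abs{\spr{u,u_k}_\LtO}^2 \leq (\sup_k \lambda_k^{-1}) \norm{u}_\LtO^2$, using $\lambda_k^{-1}$ being bounded (the $\lambda_k$ are bounded below away from zero by coercivity, with $\lambda_k \to \infty$ being the typical regime). By the uniqueness above, $E_{m,0}^* u = w$, which is the series expression; and since $B w = u$ by termwise application of $B u_k = \lambda_k u_k$, we also obtain $E_{m,0}^* u = B^{-1} u$.

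The main obstacle I anticipate is the convergence bookkeeping — justifying that the series defining $w$ converges in $\HmzO$ (not merely in $\LtO$) and that termwise integration by parts is legitimate. Both reduce to a quantitative lower bound on $\lambda_k$, which one obtains from coercivity of the bilinear form via $\lambda_k = \abs{u_k}_\HmO^2 \geq C \norm{u_k}_\LtO^2 = C$ for the equivalence constant $C > 0$ furnished by Proposition~\ref{prop_norm_equivalence_Hm}. Once this uniform positivity is in hand, everything else is routine Parseval-style bookkeeping in the orthonormal basis $\Kl{u_k}$.
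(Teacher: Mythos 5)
Your proof is correct, but it takes a genuinely different route from the paper. The paper's proof leans on Proposition~\ref{prop_PDE_HmO_seminorm}: it asserts that $z = B^{-1}u$ if and only if $z$ solves the BVP \eqref{PDE_seminorm}, hence the associated weak problem, hence $z = E_{m,0}^*u$, and then simply expresses $B^{-1}$ through the eigensystem. You instead bypass the BVP characterization entirely, construct the series candidate $w$, and verify directly against the variational definition of the adjoint that $w = E_{m,0}^*u$. This buys you two things. First, Proposition~\ref{prop_PDE_HmO_seminorm} assumes a sufficiently smooth boundary, which Proposition~\ref{prop_Laplace_SVD} does not state (only finite width), so your argument is better matched to the actual hypotheses. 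Second, you supply the convergence bookkeeping the paper omits: the uniform lower bound $\lambda_k = \abs{u_k}_\HmO^2 \geq c^2\norm{u_k}_\LtO^2 = c^2 > 0$ from the norm equivalence of Proposition~\ref{prop_norm_equivalence_Hm}, and the resulting convergence of the series in $\HmzO$. One step you use implicitly and should state: the identity $\abs{w}_\HmO^2 = \sum_k \lambda_k^{-1}\abs{\spr{u,u_k}_\LtO}^2$ rests on the $u_k$ being \emph{orthogonal in the seminorm inner product} with $\abs{u_k}_\HmO^2 = \lambda_k$, which follows from $\spr{u_j,u_k}_\HmO = \spr{Bu_j,u_k}_\LtO = \lambda_j\delta_{jk}$; Parseval in the $\LtO$-orthonormal basis alone does not give this. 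Also, your final assertion $Bw = u$ (hence $w = B^{-1}u$) tacitly requires $w$ to lie in the domain $H^{2m}(\Omega)\cap H_0^m(\Omega)$ of $B$, which needs a spectral-theoretic argument (e.g.\ $\sum_k \lambda_k^2\abs{\spr{w,u_k}_\LtO}^2 = \norm{u}_\LtO^2 < \infty$ together with closedness of $B$); the paper glosses over the same point, so this is a shared, minor informality rather than a defect specific to your argument.
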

\begin{proof}
Let $m \in \N$ and let the differential operator $B$ be defined as in \eqref{def_B}. Note that for all $u \in \LtO$ there holds $z = B^{-1} u$ if and only if $z \in H^{2m}(\Omega)$ satisfies the BVP \eqref{PDE_seminorm}. Hence, it also satisfies the weak problem associated to this BVP and thus $z = B^{-1} u = E_{m,0}^* u$. Now if $(\lambda_k,u_k)_{k\in\N}$ denotes an eigensystem of $B$, then $B^{-1}$ can be expressed as in \eqref{eq_Es_Binv}, which yields the assertion.
\end{proof}

In the next example we apply the above proposition to the case $m=1$ and certain domains $\Omega$, for which eigensystems of $-\laplace\vert_{H^1_0(\Omega)}$ are known explicitly; see e.g.\ \cite{Kuttler_Sigillito_1984}. 

\begin{example}
We consider the embedding operator $E_{1,0} : \HmzO \to \LtO$ for three domains $\Omega \subset \R^2$ commonly appearing in different applications such as tomography or astronomy. Due to Proposition~\ref{prop_Laplace_SVD}, we can characterize $E_{1,0}^*$ via eigenvalues $\lambda$ of
    \begin{equation*}
    \begin{split}
        -\laplace u(x) &= \lambda u(x) \,, 
        \qquad 
        \forall \, x \in \Omega \,,
        \\
        u(x) &= 0 \,,
        \qquad
        \forall \, x \in \partial \Omega \,.
    \end{split}
    \end{equation*}
Summarizing results found e.g.\ in \cite{Kuttler_Sigillito_1984} we obtain the following orthogonal eigensystems:
    \begin{enumerate}
        \item For $\Omega = (0,a) \times (0,b) \subset \R^2$ an eigensystem $\kl{\lambda_{m,n},u_{m,n}}_{m,n\in\N}$ is given by
            \begin{equation*}
                \lambda_{m,n} = \pi^2\kl{ \kl{\frac{m}{a}}^2 + \kl{\frac{n}{b}}^2  } \,,
                \qquad
                u_{m,n}(x) = \sin\kl{\frac{m\pi x}{a}}\sin \kl{\frac{n \pi y}{b}}\,.
            \end{equation*}
        \item For $\Omega = \Kl{x \in \R^2 \, \vert \, \abs{x} < a}$ an eigensystem $\kl{\lambda_{m,n},u_{m,n}}_{m \in\N_0 \,, n \in \N}$ is given by
            \begin{equation*}
                \lambda_{m,n} = \kl{\frac{j_{m,n}}{a}}^2 \,,
                \qquad
                u_{m,n}(r,\theta) = J_m\kl{\frac{j_{m,n}r}{a}} \kl{A \cos\kl{m \theta} + B \sin\kl{m\theta} } \,,
            \end{equation*}
        where $(r,\theta)$ are polar coordinates and $j_{m,n}$ is the $n$th zero of the $m$th Bessel function, i.e., $J_m(j_{m,n}) = 0$. Tables of these zeros can e.g.\ be found in \cite{Abramowitz_Stegun_1964}. Note that asymptotically as $n \to \infty$ there holds $j_{m,n} \sim \kl{n + m/2 - 1/4}\pi$.
        \item For $\Omega = \Kl{x \in \R^2 \, \vert \, a < \abs{x} < b}$ an eigensystem $\kl{\lambda_{m,n},u_{m,n}}_{m \in\N_0 \,, n \in \N}$ is given by
            \begin{equation*}
            \begin{split}
                u_{m,n} &= \kl{ Y_m(k_{m,n})J_m\kl{\frac{k_{m,n}r}{a} } - J_m(k_{m,n})Y_m\kl{\frac{k_{m,n}r}{a}}    }(A \cos\kl{m \theta} + B \sin\kl{m \theta}) \,,
                \\
                \lambda_{m,n} &= (k_{m,n}/a)^2 \,,
            \end{split}
            \end{equation*}
        where $Y_m$ is the $m$th Bessel function of the 2nd kind, and $k_{m,n}$ is the $n$th root of 
            \begin{equation*}
                Y_{m}(k) J_m\kl{\frac{kb}{a}} - J_m(k) Y_m\kl{\frac{kb}{a}} = 0 \,.
            \end{equation*}
        Tables with numerical values of these roots $k_{m,n}$ can e.g.\ be found in \cite{Jahnke_Emde_1945}. 
    \end{enumerate}
For each of these cases, after normalizing the eigenfunctions $u_{m,n}$ Proposition~\ref{prop_Laplace_SVD} yields
    \begin{equation*}
        E_{1,0}^*u = 
        \sum\limits_{m,n} \frac{1}{\lambda_{m,n}} \spr{u,u_{m,n}}_\LtO u_{m,n} \,,
    \end{equation*}
where the sum ranges over all indices $m,n$ for which the eigensystems are defined.
\end{example}

\section{Representation in Discrete Settings}\label{sect_discrete}

In this section, we consider the question of how the adjoint embedding operator $E_s^*$ can be properly represented in finite dimensions. For this, we consider the following setting:
    \begin{itemize}
        \item $X_m := \operatorname{span}\Kl{\phi_1,\dots,\phi_m}$ is a finite dimensional linear subspace $\HsO$.
        \item $Y_n := \operatorname{span}\Kl{\psi_1,\dots,\psi_n}$ is a finite dimensional linear subspace $\LtO$.
        \item The functions $\Kl{\phi_k}_{k=1,\dots,m}$ and $\Kl{\psi_k}_{k=1,\dots,n}$ are linearly independent.
        \item $P_m$ and $Q_n$ are the orthogonal projectors onto $X_m$ and $Y_n$, respectively.
    \end{itemize}
Within this finite dimensional setting, we are now interested in the operators
    \begin{equation}\label{def_Esmn}
        \Esmn := Q_n E_s P_m  \,,
        \qquad
        \text{and}
        \qquad
        (\Esmn)^* = P_m E_s^* Q_n \,.
    \end{equation}
These can be characterized via certain matrix-vector multiplications, as we now see in
\begin{proposition}
Let $0 \leq s \in \R$, let $E_s : \HsO \to \LtO$ denote the embedding operator, and let $\Esmn$ be defined as in \eqref{def_Esmn}. Furthermore, let $u \in \LtO$ and define
    \begin{equation*}
    \begin{split}
        \HXm &:= \kl{ \spr{\phi_j,\phi_k}_\HsO }_{k,j=1}^{m,m} \,, \qquad
        \HYn := \kl{ \spr{\psi_j,\psi_k}_\LtO }_{k,j=1}^{n,n} \,,
        \\
        \uv &:= \kl{\spr{u,\psi_j}_\LtO}_{j=1}^{n}\,,
        \qquad
        \Mmn := \kl{ \spr{\psi_j,\phi_k}_\LtO }_{k,j=1}^{m,n} \,.
    \end{split}
    \end{equation*}
Then the element $(\Esmn)^*u$ is given by
    \begin{equation*}
        (\Esmn)^*u =  \sum\limits_{k=1}^{m} z_k \phi_k \,,
        \qquad
        \text{where}
        \qquad
        \zv = (z_k)_{k=1}^{m} := \HXm^{-1} \Mmn \HYn^{-1} \uv \,. 
    \end{equation*}
\end{proposition}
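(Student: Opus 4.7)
The plan is to unpack the two projections $P_m$ and $Q_n$ in the definition $(E^s_{m,n})^* = P_m E_s^* Q_n$ one at a time, translating each orthogonal projection condition into the linear algebra defining the two Gramian-type matrices. Since $(E^s_{m,n})^* u$ lies in $X_m$, I can immediately write $(E^s_{m,n})^* u = \sum_{k=1}^m z_k \phi_k$, reducing the problem to identifying the vector $\mathbf{z}$ explicitly in terms of $\mathbf{u}$.

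First I would handle $Q_n u$. Writing $Q_n u = \sum_{j=1}^n y_j \psi_j$, the defining property of the orthogonal projector onto $Y_n$ in $L_2(\Omega)$ gives $\langle Q_n u, \psi_k\rangle_{L_2(\Omega)} = \langle u, \psi_k\rangle_{L_2(\Omega)}$ for all $k=1,\dots,n$. Expanding the left-hand side in terms of the basis $\{\psi_j\}$ yields the linear system $H_{Y_n} \mathbf{y} = \mathbf{u}$, so $\mathbf{y} = H_{Y_n}^{-1}\mathbf{u}$, where invertibility of $H_{Y_n}$ follows from linear independence of $\psi_1,\dots,\psi_n$.

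Next I would apply $P_m E_s^*$ to $Q_n u$. The defining property of the orthogonal projector $P_m$ onto $X_m$ in $H^s(\Omega)$ gives, for $l = 1,\dots,m$,
\begin{equation*}
\sum_{k=1}^m z_k \langle \phi_k,\phi_l\rangle_{H^s(\Omega)} = \langle P_m E_s^* Q_n u, \phi_l\rangle_{H^s(\Omega)} = \langle E_s^* Q_n u, \phi_l\rangle_{H^s(\Omega)}.
\end{equation*}
Using Definition~\ref{def_Es_adj} to move $E_s^*$ to the other side and the fact that $E_s \phi_l = \phi_l$, the right-hand side equals $\langle Q_n u, \phi_l\rangle_{L_2(\Omega)} = \sum_{j=1}^n y_j \langle \psi_j,\phi_l\rangle_{L_2(\Omega)}$. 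In matrix form this reads $H_{X_m}\mathbf{z} = M_{m,n}\mathbf{y}$, and combining with $\mathbf{y} = H_{Y_n}^{-1}\mathbf{u}$ gives the claimed formula $\mathbf{z} = H_{X_m}^{-1} M_{m,n} H_{Y_n}^{-1} \mathbf{u}$, once we also invoke invertibility of $H_{X_m}$ from linear independence of $\phi_1,\dots,\phi_m$.

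There is no serious obstacle here; the argument is essentially bookkeeping. The only mildly delicate point is making sure the index conventions of the matrices $H_{X_m}$, $H_{Y_n}$, and $M_{m,n}$ in the statement line up consistently with the row/column order produced by the two projection equations, so that the composition $H_{X_m}^{-1} M_{m,n} H_{Y_n}^{-1}$ acts on $\mathbf{u}$ in the correct order rather than requiring a transpose.
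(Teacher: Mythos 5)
Your proposal is correct and follows essentially the same route as the paper's own proof: expand $(\Esmn)^*u$ in the basis of $X_m$, use the projector and adjoint properties to reduce the $\HsO$ inner products to $\LtO$ inner products against $Q_n u$, and expand $Q_n u$ in the basis of $Y_n$ to obtain $\zv = \HXm^{-1}\Mmn\HYn^{-1}\uv$. The only difference is cosmetic ordering (you resolve $Q_n u$ first, the paper resolves it last), and your attention to the index conventions of $\Mmn$ is exactly the bookkeeping point the paper handles implicitly.
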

\begin{proof}
Due to \eqref{def_Esmn}, for all $u \in \LtO$ there holds $(\Esmn)^*u \in X_m$, and thus
    \begin{equation*}
        (\Esmn)^* u = \sum\limits_{k=1}^{m} z_k \phi_k \,,
        \qquad
        \text{where}
        \qquad
        \zv = (z_k)_{k=1}^m = \HXm^{-1}\kl{\spr{(\Esmn)^* u,\phi_k}_\HsO }_{k=1}^{m} \,.
    \end{equation*}
Now since due to the definition \eqref{def_Esmn} of $\Esmn$ there holds
    \begin{equation*}
        \spr{(\Esmn)^* u,\phi_k}_\HsO
        =
        \spr{P_m E_s^* Q_n  u,\phi_k}_\HsO
        =
        \spr{Q_n  u,\phi_k}_\LtO \,,
    \end{equation*}
it follows that 
    \begin{equation}\label{eq_helper_3}
        \zv = \HXm ^{-1} \kl{\spr{Q_n  u,\phi_k}_\LtO }_{k=1}^{m} \,.
    \end{equation}
Next, since $Q_n u \in Y_n$ it follows that
    \begin{equation*}
        Q_n u = \sum\limits_{j=1}^{n} v_j \psi_j \,,
        \qquad
        \text{where}
        \qquad
        \vv = (v_k)_{j=1}^{n} = \HYn^{-1} \uv \,,
    \end{equation*}
and thus there holds
    \begin{equation*}
        \kl{\spr{Q_n  u,\phi_k}_\LtO }_{k=1}^{m}
        =
        \kl{\sum\limits_{j=1}^{n} v_j \spr{ \psi_j ,\phi_k}_\LtO }_{k=1}^{m} = \Mv_{m,n} \vv 
        \,,
    \end{equation*}
Inserting this into \eqref{eq_helper_3} it follows that
    \begin{equation*}
        \zv = \HXm ^{-1} \kl{\spr{Q_n  u,\phi_k}_\LtO }_{k=1}^{m}
        = \HXm ^{-1} \Mv_{m,n} \HYn^{-1} \uv \,,
    \end{equation*}
which concludes the proof.
\end{proof}

In the above proposition, $\HYn^{-1} \uv$ corresponds to the projection of $u$ onto $Y_n$, the application of $\Mv_{m,n}$ corresponds to a basis transformation, and the application of $\HXm^{-1}$ corresponds to the projection onto $X_m$. This last projection essentially represents the adjoint embedding operator $E_s^*$ in the discrete setting, which can also be seen from the fact that $\HXm$ is also the stiffness matrix of the variational problem \eqref{def_a_l} over $X_m$.

\section{Applications in Inverse Problems}\label{sect_application}

In this section, we consider the application of our theoretical considerations from the previous sections to the solution of (nonlinear) inverse problems in the standard form
	\begin{equation}\label{Fx=y}
		F(u) = y \,,
	\end{equation}
where the operator $F : D(F) \subseteq X \to Y$ maps between two Hilbert spaces $X$ and $Y$. A typical setting appearing, e.g., in tomography or in parameter estimation problems is
	\begin{equation*}
		F :  \HsO \to \LtOp \,, 
	\end{equation*}
where $s \in \R$, and $\Omega, \Omega' \subseteq \R^N$ for some $N \in \N$. In many situations, it is possible to write $F = G \circ E_s$, where $G :  \LtO \to \LtOp$ and $E_s \, : \, \HsO \to \LtO$. One can interpret this as $G$ encoding the behaviour or ``physics'' of $F$, and $E_s$ encoding the desired or expected smoothness of a solution of \eqref{Fx=y}, relating to the definition space $\HsO$ of $F$. A good example is the Radon transform \cite{Natterer_2001,Louis_1989}, which in 2D is given by
	\begin{equation}\label{Radon}
	\begin{split}
		(R u)(s,\vphi) :=  
		\int_\R u(s\omega(\vphi) + t \omega(\vphi)^\perp) \, dt \,,
	\end{split}	
	\end{equation}
where $\omega(\vphi) = (\cos(\vphi),\sin(\vphi))^T$ for $\vphi \in [0,2\pi)$ and $s \in \R$. The classic X-ray tomography problem consists of determining a density function $u$ from sinogram measurements $y$ connected via $Ru=y$. In the simplest case, one considers $R \, : \, \LtO \to \LtOp$, where $\Omega :=  \{x \in \R^2 \, \vert \, \abs{x} \leq 1\}$ and $\Omega' := \R \times [0,2\pi)$. However, if one is interested in reconstructions with a higher smoothness, one can change the definition space to $\HsO$ for some $s > 0$, which is mathematically equivalent to defining $A := R E_s$ and instead of $Ru = y$ consider $Au=y$. While the ``physics'' of the problem stays the same, namely line-integration according to \eqref{Radon}, the resulting problems and in general also the reconstructions obtained using regularization methods are different. This is also underlined by the fact that for many regularization methods $R_\alpha$ for solving linear inverse problems of the form $A u = y$ it can be shown that there holds $R(R_\alpha) = R(A^*)$. Now given a decomposition of the form $A = G \circ E_s$ it follows that $A^* = E_s^* \circ G^*$, and thus there holds $R(R_\alpha) \subset{R(E_s^*)}$. This means that the assumed underlying smoothness of the solution is directly encoded into the reconstruction method via the use of $E_s^*$.

\subsection{Application in iterative regularization}

The embedding operator $E_s$ not only plays a role in the proper definition of inverse problems, but it is also crucial in their solution. This can be seen very clearly by considering Landweber iteration, which besides Tikhonov regularization is one of the most well-known approaches for solving inverse problems \cite{Landweber_1951,Engl_Hanke_Neubauer_1996,Kaltenbacher_Neubauer_Scherzer_2008}. It is defined via
	\begin{equation}\label{Landweber}
		u_{k+1}^\delta = u_k^\delta + F'(u_k^\delta)^*\kl{\yd - F(u_k^\delta)} 	\,,
	\end{equation}
where $F'(\cdot)$ denotes the Fr\'echet derivative of $F$, and $\yd$ denotes a noisy version of $y$. Here, the embedding operator $E_s$ and in particular its adjoint $E_s^*$ enter implicitly via $F'(u_k^\delta)^*$, which depends on the definition and image space of $F$. This becomes apparent when considering the case $F = G \circ E_s$, since then $F'(u)h = G'(E_s(u))E_s h$ and thus
	\begin{equation}\label{Landweber_embedding}
		u_{k+1}^\delta = u_k^\delta + E_s^* G'(u_k^\delta)^* \kl{\yd - G(u_k^\delta)} \,.
	\end{equation}    
Hence, every iteration step requires the evaluation of the operator $E_s^*$. The same is true for most other iterative regularization methods such as the Levenberg-Marquart or the iteratively regularized Gauss-Newton method \cite{Kaltenbacher_Neubauer_Scherzer_2008}, since they commonly require at least one application of $F'(\cdot)^*$ per iteration. Furthermore, recall from Section~\ref{sect_PDEs} that for $u \in \LtO$ the element $E_s^* u$ typically belongs to a Sobolev space with a higher order than $s$. Hence, iterative regularization methods typically lead to approximations with a higher regularity than indicated by their definition spaces.

On the other hand, a popular modification of Landweber iteration \eqref{Landweber} known as (preconditioned) Landweber iteration in Hilbert scales \cite{Engl_Hanke_Neubauer_1996,Kaltenbacher_Neubauer_Scherzer_2008, Neubauer_2016,Egger_Neubauer_2005} is defined via  
	\begin{equation*}
		u_{k+1}^\delta = u_k^\delta + L^{-2a} F'(u_k^\delta)^* \kl{\yd - F(u_k^\delta)} \,,
	\end{equation*}
for some $a \in \R $ and with $L$ as in Section~\ref{subsect_Hilbert_scales}. Depending on the problem, and in particular on the expected smoothness of the solution, both positive and negative values of $a$ can be beneficial \cite{Neubauer_2016,Egger_Neubauer_2005}. Considering again the case $F = G \circ E_s$, the method reads
	\begin{equation*}
		u_{k+1}^\delta = u_k^\delta  + L^{-2a} E_s^* G'((u_k^\delta)^* \kl{\yd - F((u_k^\delta)}  \,.
	\end{equation*}
Hence, for the choice $a=1$ and with $L = (E_s^*)^{-1/2}$ we obtain the iteration
\begin{equation*}
\xkpd = \xkd + G'(\xkd)^* \kl{\yd - G(\xkd)} \,.
\end{equation*}
Landweber iteration is often used implicitly in this form when ``first discretize then regularize'' approaches are used, or when the method is employed without a previous study of the mapping properties of the operator $F$. While the theory of Landweber iteration in Hilbert scales provides this with some theoretical basis, it should be noted that this approach is only valid under restrictive assumptions on the solution and the operator $F$. Hence, in general $E_s^*$ does not disappear from Landweber iteration.

\subsection{Application in variational regularization}

The operator $E_s^*$ also features prominently in variational regularization methods for solving \eqref{Fx=y}, for example in the minimization of the nonlinear Tikhonov functional
	\begin{equation}\label{Tikhonov}
		T_\alpha(u) := \norm{F(u) - \yd}_\LtO^2 + \alpha \norm{u}_\HsO^2 \,.
	\end{equation}
For nonlinear operators $F$, iterative optimization methods are typically used to minimize $T_\alpha(u)$. Since these methods commonly involve the operator $F'(\cdot)^*$, in case that $F = G \circ E_s$ they thus also explicitly require the application of $E_s^*$. Moreover, using \eqref{norm_Hs_Es} we find that the Tikhonov functional \eqref{Tikhonov} can be rewritten as
	\begin{equation*}
		T_\alpha(u) = \norm{F(u) - \yd}_\LtO^2 + \alpha \norm{(E_s^*)^{-1/2} u}_\LtO^2 \,,
	\end{equation*} 
making the involvement of $E_s^*$ even more explicit. In case that $F=A$ is linear, the minimizer of the above functional is the solution of the linear operator equation
	\begin{equation}\label{Tikh_Es_A_gen}
		(A^*A + \alpha (E_s^*)^{-1}) u = A^* \yd \,,
	\end{equation}
and thus also a solution of
	\begin{equation}\label{Tikh_Es_A}
		(E_s^* A^*A + \alpha I) u = E_s^* A^* \yd \,.
	\end{equation}
This equation can either be solved directly or iteratively, which in both cases requires the (efficient) application of $E_s^*$. Furthermore, note that rearranging \eqref{Tikh_Es_A} we obtain
	\begin{equation*}
		u = \frac{1}{\alpha} E_s^* \kl{ A^* \yd -  A^*A u } \,,
	\end{equation*}
and thus a minimizer $u$ of the Tikhonov functional is in the range of $E_s^*$. Hence, as for Landweber iteration, Tikhonov regularization yields an approximate solution of \eqref{Fx=y} which in general has a higher regularity than indicated by the definition space $\HsO$.

\subsection{Application in the discrete setting}

Next, we return to the discrete setting of Section~\ref{sect_discrete}, and consider a general nonlinear operator $F: \HsO \to \LtO$. With $P_m$ and $Q_n$ again denoting the orthogonal projectors onto the finite dimensional subspaces $X_m$ and $Y_n$, we define $F_{m,n}(u) := Q_n F(P_m u)$. Our aim is now to consider both Tikhonov regularization and Landweber iteration in this discrete setting (cf.~\cite{Engl_Hanke_Neubauer_1996,Neubauer_1989}), and to identify the influence of $E_s^*$. For this, we define
	\begin{equation*}
	\begin{split}
		\vv(\uv) &:= \kl{ \spr{F\kl{\sum\limits_{l=1}^m u_l \phi_l}, \psi_j}_\LtO }_{j=1}^n
		\qquad
		\ydv := \kl{ \spr{y^\delta, \psi_j}_\LtO }_{j=1}^n\,,
		\\
		&\qquad
		\Am(\uv) := \kl{ \spr{\psi_j, F'\kl{\sum\limits_{l=1}^m u_l \phi_l} \phi_k}_\LtO }_{k,j = 1}^{n,m}  \,.
	\end{split}
	\end{equation*}
Using this, the discrete version of nonlinear Landweber iteration \eqref{Landweber} takes the form
	\begin{equation*}
		\uv_{k+1}^\delta = \uv_{k}^\delta + \HXm^{-1} \Am(\uv_k^\delta) \HYn^{-1} ( \ydv - \vv(\uv_k^\delta) ) \,. 
	\end{equation*}
If $F = A$ is a linear operator, then this iteration can be written as  
	\begin{equation}\label{Landweber_discrete}
		\uv_{k+1}^\delta = \uv_{k}^\delta + \HXm^{-1} \Am^H \HYn^{-1} ( \ydv - \Am \uv_{k}^\delta ) \,,
	\end{equation}
where the superscript $H$ denotes the conjugate transpose and
	\begin{equation*}
		\Am := \kl{\spr{A \phi_j,\psi_k}_\LtO }_{k,j=1}^{n,m} \,.
	\end{equation*}
Similarly, the discrete version of the Tikhonov functional \eqref{Tikhonov} is given by
	\begin{equation*}
	T_\alpha(\uv) = \kl{\vv(\uv)-\ydv}^H \HYn \kl{\vv(\uv)-\ydv} + \alpha \uv^H \HXm \uv \,,
	\end{equation*}
and thus, if $F = A$ is linear, the minimizer of this functional satisfies the linear system
	\begin{equation}\label{Tikh_A_Hm}
		\kl{ \Am^H \HYn \Am + \alpha  \HXm } \uv = \Am^H \HYn \ydv \,. 
	\end{equation}

Comparing \eqref{Landweber_discrete} and \eqref{Tikh_A_Hm} to their continuous counterparts \eqref{Landweber} and \eqref{Tikh_Es_A_gen}, respectively, we see that the application of the adjoint embedding $E_s^*$ basically corresponds to the inversion of the stiffness matrix $\HXm$; cf.\ Section~\ref{sect_PDEs} and \ref{sect_discrete}. Furthermore, the application of $\HXm^{-1}$ is often the most computationally expensive part of an implementation of Landweber iteration or Tikhonov regularization. However, since it essentially corresponds to the application of the adjoint embedding operator $E_s^*$, it can be replaced by one of the characterizations considered above, followed by a projection onto $X_m$. This can be particularly useful if $F = G \circ E_s$ as before. Alternatively, since
	\begin{equation*}
		E_s^* u = E_s^* \kl{\sum\limits_{k=1}^n u_k  \psi_k}
		=
		\sum\limits_{k=1}^n u_k E_s^*\psi_k \,,
		\qquad
		\forall \, u \in Y_n \,,
	\end{equation*}
it can also be beneficial to precompute the functions $E_s^* \psi_k$ and then to reuse them when required. In fact, it is even possible to choose $\phi_k := E_s^* \psi_k$, in which case the numerical computations necessary for Landweber iteration decrease significantly; see e.g.\ \cite{Neubauer_2000,Hubmer_2015}.

\subsection{Numerical example: Application of adjoint embedding}\label{subsect_adj_embd}

\begin{figure}[ht!]
	\centering
	\includegraphics[width=0.48\textwidth]{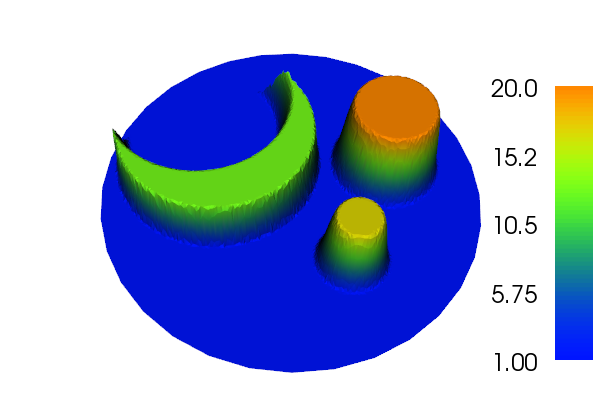}
	\quad
	\includegraphics[width=0.48\textwidth]{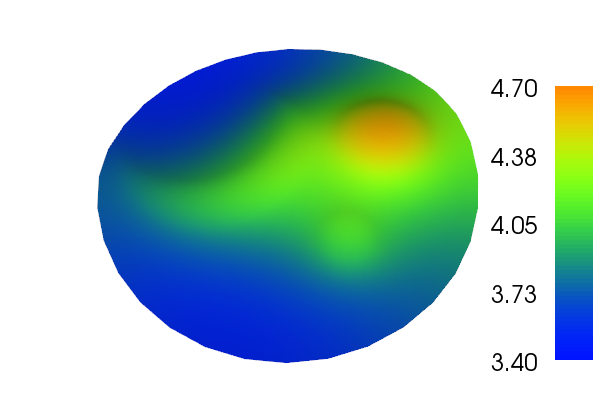}
	\label{fig_example}
	\caption{Functions $u$ (left) and $E_1^*u$ (right) computed via solving the BVP \eqref{PDE_example_Green}.}
\end{figure}

In this section, we give an example of the numerical evaluation of the adjoint embedding operator $E_s^* u$ for a specific choice of $s$ and $u$. In particular, we consider the setting of Example~\ref{example_PDE_01}, i.e., we consider $E_1 : H^1(\Omega) \to \LtO$, where $H^1(\Omega)$ is equipped with the norm \eqref{def_HmO_Da_full_norm} corresponding to the inner product \eqref{def_HmO_Da_full_inner}. Furthermore, we select a circular domain $\Omega := \Kl{(r,\theta ) \in [0,1) \times [0,2\pi]} \subset \R^2$ (in polar coordinates) and the function $u$ depicted in Figure~\ref{fig_example} (left). Following Example~\ref{example_PDE_01}, the element $E_1^* u$ is given as the unique (weak) solution of \eqref{PDE_example_Green}, which we compute numerically using the same finite element discretization as described in \cite{Hubmer_Knudsen_Li_Sherina_2018}. Note that this directly corresponds to the representation in the discrete setting described in Section~\ref{sect_discrete}, with the spaces $X_m$ and $Y_n$ spanned by the respective finite element basis functions. The resulting function $E_1^* u$ is depicted in Figure~\ref{fig_example} (right), and basically amounts to a smoothed-out version of the function $u$. This matches well with our observations from Sections~\ref{sect_fourier_transform} and \ref{sect_filters} on Fourier characterizations and spatial filter representations of the adjoint embedding.

\subsection{Numerical example: Inverting the Radon transform}

\begin{figure}[ht!]
    \centering
    \includegraphics[width=\textwidth]{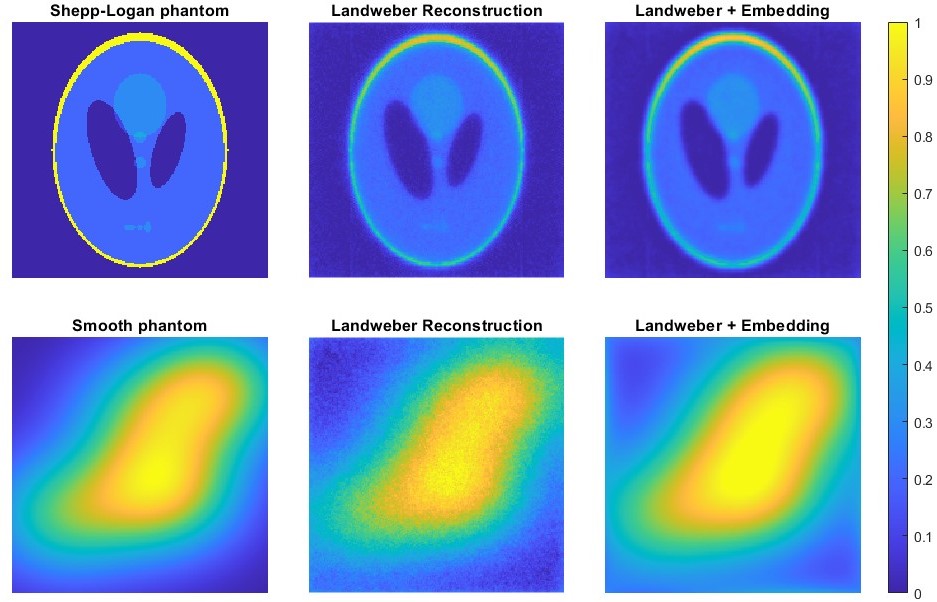}
    \caption{Ground truths (left) and reconstructions using Landweber iteration \eqref{Landweber_embedding} both without embedding, i.e., $s=0$ (middle), and with embedding, i.e., with $s=0.5$ (right), for the Shepp-Logan phantom (top) as well as a smooth phantom (bottom).}
    \label{fig_Radon_results}
\end{figure}

Finally, we provide a numerical example demonstrating the application of the adjoint embedding operator $E_s^*$ for the solution of an ill-posed inverse problem. In particular, we consider the inversion of the Radon transform \eqref{Radon} as it appears for example in computerized tomography. For simulating the Radon transform, we use the AIR~Tools~II toolbox \cite{Hansen_2018}, which provides a matrix representation of $R: \LtO \to \LtOp$ based on a piecewise-constant discretization of the unknown density function $u$ on a uniform $N \times N$ pixel grid. In our tests, we choose $N = 201$, as well as $300$ parallel lines $s$ and $180$ uniformly spaced angles $\varphi$. As our ground truth densities $u$, we use both the Shepp-Logan phantom as well as a smooth phantom available in the toolbox, cf.~Figure~\ref{fig_Radon_results}(left), and we add $10\%$ uniformly distributed relative noise to the corresponding sinograms $y$.

For reconstruction, we apply standard Landweber iteration \eqref{Landweber_embedding}, both without embedding ($s=0$) and with embedding ($s=0.5$). The Fourier representation \eqref{eq_Es_Fourier_v1} is used to compute $E_s^*$ in each iteration. Note that the case $s=0$ corresponds to classic Landweber iteration for $R: \LtO \to \LtOp$, while the case $s=0.5$ corresponds to the setting $A = R \circ E_{s} : \HsO \to \LtOp$ discussed in Section~\ref{sect_application}. The iteration is stopped with the discrepancy principle using the canonical choice $\tau = 1.01$. The corresponding results, computed using Matlab 2022a on a standard notebook computer, are depicted in Figure~\ref{fig_Radon_results}. As expected, the reconstructions obtained with embedding ($s=0.5$) are much smoother than those without ($s=0$), since the adjoint $E_s^*$ smooths-out the Landweber iterates; cf.~Section~\ref{subsect_adj_embd}. Consequently, also the background noise in the reconstructions is dampened considerably. For the Shepp-Logan phantom, both the $\LtO$ and $\HsO$ reconstruction errors are comparable, which is expected given that the phantom itself is not smooth. However, for the smooth phantom the relative $\HsO$ error is about $25\%$ smaller when the adjoint embedding is used in the reconstruction. This indicates that, as expected, the use of the (adjoint) embedding operator is in particular beneficial if the ground truth itself is smooth.

\section{Conclusion}\label{sect_conclusion}

In this paper, we considered some properties and different representations of the adjoint $E_s^*$ of the Sobolev embedding operator $E_s$, which is commonly encountered in inverse problems. In particular, we investigated variational representations and connections to boundary value problems, Fourier and wavelet representations, as well as connections to spatial filters. Furthermore, we considered representations in terms of Fourier series, singular value decompositions and frame decompositions, as well as representations in finite dimensional settings. Finally, we discussed the use of adjoint embedding operators for solving inverse problems, and provided an illustrative numerical example.

\section{Support}

The authors were funded by the Austrian Science Fund (FWF): F6805-N36 (SH,RR) and F6807-N36 (ES) within the SFB F68 ``Tomography Across the Scales''.

\bibliographystyle{plain}
{\footnotesize
\bibliography{mybib}
}

\end{document}